\title[Long-range DLA II]{One-dimensional long-range Diffusion Limited
                          Aggregation II: the transient case}
\author{Gideon Amir}
\author{Omer Angel}
\author{Gady Kozma}
\definecolor{aleacolour}{rgb}{0.09,0.32,0.44} 
\titlespacing*{\section}{0pt}{3.5ex plus 1ex minus .2ex}{2.3ex plus .2ex}
\titlespacing*{\subsection}{0pt}{3.25ex plus 1ex minus .2ex}{1.5ex plus .2ex}
\newtheorem{thm}{Theorem}
\newtheorem{lemma}[thm]{Lemma}
\newtheorem{coro}[thm]{Corollary}
\newtheorem{prop}[thm]{Proposition}
\theoremstyle{definition}
\newtheorem{defn}{Definition}
\newtheorem*{defn*}{Definition}
\theoremstyle{remark}
\newtheorem*{rem*}{Remark}
\newcommand{\thmref}[1]{Theorem~\ref{T:#1}}
\newcommand{\eps}{\varepsilon}
\renewcommand{\P}{\mathbb{P}}
\newcommand{\E}{\mathbb{E}}
\newcommand{\R}{\mathbb{R}}
\newcommand{\Z}{\mathbb{Z}}
\newcommand{\N}{\mathbb{N}}
\newcommand{\1}{\mathbf{1}}
\newcommand{\cB}{\mathcal{B}}
\newcommand{\cF}{\mathcal{F}}
\newcommand{\F}{\mathcal{F}}
\newcommand{\Gg}{\mathcal{G}}
\newcommand{\hS}{\widehat{S}}
\newcommand{\hT}{\widehat{T}}
\newcommand{\htau}{\widehat{\tau}}
\DeclareMathOperator{\capa}{Cap}
\DeclareMathOperator{\diam}{diam}
\DeclareMathOperator{\Med}{Med}
\DeclareRobustCommand{\cyrtext}{%
  \fontencoding{T2A}\selectfont\def\encodingdefault{T2A}}
\DeclareRobustCommand{\textcyr}[1]{\leavevmode{\cyrtext #1}}
\begin{document}

\begin{abstract}
  We examine diffusion-limited aggregation for a one-dimensional random
  walk with long jumps. We achieve upper and lower bounds on the growth
  rate of the aggregate as a function of the number of moments a single
  step of the walk has.  In this paper we handle the case of transient
  walks.
\end{abstract}

\maketitle

\section{Introduction}

Diffusion-limited aggregation (DLA for short) is a random growth model
constructed as follows.  Start with a single particle in space.  Each
subsequent particle performs a random walk ``from infinity'', until it hits
any previous particle.  It is then frozen and added to the aggregate at the
last site it visited before hitting the aggregate.  Precise definitions are
included below.  We refer the reader to part I \cite{partI} of this project
for a history of the subject, and to part III \cite{partIII} for some
additional results.

The topic of these papers is one-dimensional long-range DLA.  The
following theorem was stated in part I (precise definitions are included
in \S\ref{sec:defs}).  


\begin{thm}\label{T:all}
  Let $R$ be a symmetric random walk with step distribution satisfying
  $\P(|R_1-R_0|=k) = (c+o(1)) k^{-1-\alpha}$. Let $D_n$ be the diameter of
  the $n$ particle aggregate. Then almost surely:
  \begin{enumerate}
  \item \label{enu:begrec}If $\alpha>3$, then $n-1 \le D_n \le Cn+o(n)$,
    where $C$ is a constant depending only on the random walk.
  \item If $2<\alpha\le 3$, then $D_n=n^{\beta+o(1)}$, where $\beta =
    \frac2{\alpha-1}$.
  \item \label{enu:endrec}If $1<\alpha<2$ then $D_n=n^{2+o(1)}$.
  \item \label{enu:1/3to1}If $\frac{1}{3}<\alpha<1$ then
    \[
    n^{\beta+o(1)} \le D_n \le n^{\beta'+o(1)}
    \]
    where $\beta=\max(2,\alpha^{-1})$ and $\beta' =
    \frac{2}{\alpha(2-\alpha)}$.
  \item \label{enu:le1/3}If $0<\alpha<\frac{1}{3}$ then $D_n=n^{\beta+o(1)}$, where
    $\beta=\alpha^{-1}$.
  \end{enumerate}
\end{thm}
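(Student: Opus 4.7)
Since Part~I established the recurrent cases, the present paper addresses only (\ref{enu:1/3to1}) and (\ref{enu:le1/3}), the transient regime $0<\alpha<1$. In this regime $R$ is transient on $\Z$ with Green function $G(x,y)\asymp|x-y|^{\alpha-1}$, so ``walk from infinity'' is well-defined via harmonic measure from infinity: for a finite aggregate $A$, $\mu_A(x):=\lim_{|y|\to\infty}\P_y(S_{\tau_A}=x)$ exists and the distribution of the new particle's location is obtained by convolving $\mu_A$ with the step distribution of $R$. I would analyze hitting probabilities by decomposing the walk into excursions through dyadic shells around $A$ and using standard capacity estimates for intervals of the $\alpha$-stable walk.

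The lower bound $D_n\ge n^{1/\alpha+o(1)}$, which is the full claim in (\ref{enu:le1/3}) and part of (\ref{enu:1/3to1}) when $\alpha\le\tfrac12$, follows from the heavy tail of single jumps: at each step, the new particle exceeds the current diameter by $\Delta$ with probability $\gtrsim\Delta^{-\alpha}$, which I would prove by choosing a specific candidate location at distance $\Delta$ and controlling the probability of absorption there via the penultimate and final jumps of the walk. Taking the maximum across $n$ independent trials yields $n^{1/\alpha+o(1)}$. The complementary $n^2$ lower bound in (\ref{enu:1/3to1}), active for $\alpha>\tfrac12$, requires a subtler argument: even absent catastrophic single jumps, the per-step expected extension of the diameter should be of order $D^{1/2}$, reflecting a CLT-type spreading of the hitting distribution over a spread-out aggregate, yielding the discrete inequality $\Delta D\gtrsim\sqrt{D}$ and hence $D_n\gtrsim n^2$.

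For the upper bound, the case (\ref{enu:le1/3}) is comparatively quick: the dyadic decomposition together with $\mu_A$ estimates gives $\P(\text{extension}\ge\Delta)\le C\Delta^{-\alpha+o(1)}$ per step, and a union bound over $n$ particles combined with Borel--Cantelli yields $D_n\le n^{1/\alpha+o(1)}$; the aggregate is too sparse relative to $D$ in this regime to contribute any amplification. The main obstacle of the paper is the upper bound $D_n\le n^{2/(\alpha(2-\alpha))+o(1)}$ in (\ref{enu:1/3to1}). The difficulty is that a spread-out aggregate provides many potential targets for a single long jump, effectively multiplying the single-jump tail $\Delta^{-\alpha}$ by a density factor that grows as the aggregate spreads; the peculiar exponent $2/(\alpha(2-\alpha))$ should come from optimally balancing this density gain (of exponent $2-\alpha$, linked to capacity scaling of intervals under the $\alpha$-stable walk) against the cost $\alpha$ of the jump itself. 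My plan is to carry this out via an induction on dyadic diameter scales, simultaneously tracking the aggregate's diameter, capacity, and density profile inside nested shells, and to feed these estimates into a Gronwall-type recursion on $D_n$. I expect this feedback loop between the aggregate's evolving geometry and the tail of the next extension to be the technical core of the paper.
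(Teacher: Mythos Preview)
Your proposal has the organizing principle wrong, and this leads to a genuine gap. The gluing formula in the transient case (Lemma~\ref{L:mu_trans}) is $\mu(x,a)=p_{x,a}E^*_A(x)/\capa(A)$, so the per-step probability of an extension by at least $\Delta$ is of order $n\Delta^{-\alpha}/\capa(A_n)$, not $\Delta^{-\alpha}$. Both the upper and lower bounds therefore reduce to bounds on $\capa(A_n)$: the lower bound $D_n\ge n^{1/\alpha}$ comes from $\capa(A_n)\le n$, and the lower bound $D_n\ge n^2$ comes from $\capa(A_n)\le C(\diam A_n)^{1-\alpha}$ (Lemma~\ref{lem:capconv}) --- there is no CLT-type argument, and indeed the steps have infinite variance for all $\alpha<2$, so your ``per-step expected extension of order $D^{1/2}$'' is not available (the expected extension is infinite).

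More seriously, you have the relative difficulty of (\ref{enu:1/3to1}) and (\ref{enu:le1/3}) inverted. The upper bound $n^{2/(\alpha(2-\alpha))}$ for $\frac13<\alpha<1$ is the \emph{easier} one: it follows from a fairly direct lower bound $\capa(A_n)\ge c n^{(2-2\alpha)/(2-\alpha)}$ (Lemmas~\ref{lem:capabalance} and~\ref{l:cap_lowerbound}), fed into Theorem~\ref{t:diam_le}. By contrast, the matching upper bound $D_n\le n^{1/\alpha+o(1)}$ for $\alpha<\frac13$ is the technical heart of the paper and is \emph{not} ``comparatively quick'': it requires showing $\capa(A_n)=n^{1-o(1)}$, which is exactly the statement that the aggregate is maximally spread out. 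Your claimed bound $\P(\text{extension}\ge\Delta)\le C\Delta^{-\alpha+o(1)}$ is equivalent to this capacity bound and cannot be obtained by a union bound or dyadic-shell decomposition alone. The paper's mechanism (Section~\ref{sec:third}) is a coupling with a ``split DLA'' in which a far-away particle and its descendants evolve as an independent DLA; one shows that for $\alpha<\frac13$ the two pieces do not interact, so $A_n$ decomposes into $\log n$ i.i.d.\ copies of $A_{n/\log n}$, yielding a recursion $\capa A_n\gtrsim(\log n)\capa A_{n/\log n}$. Nothing in your proposal supplies this structural self-similarity, and without it the upper bound in (\ref{enu:le1/3}) does not go through.
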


\begin{figure}
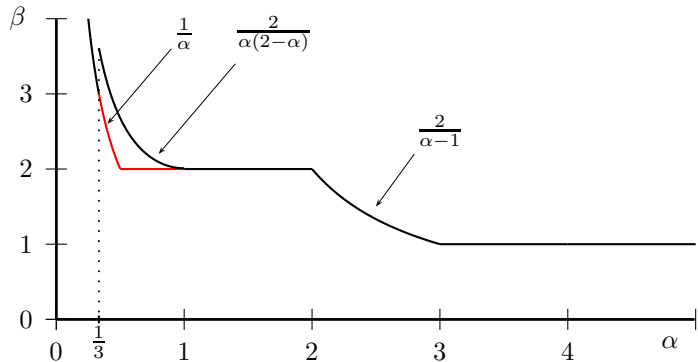

    \psset{xunit=17mm,yunit=10mm}
    \pspicture(-.25,-.4)(5.25,4)
    \psaxes(0,0)(4.99,3.99)
    \rput(-.3,4){$\beta$} \rput(4.8,-.3){$\alpha$}
    \psline(4,1)(5,1)
    \psline(3,1)(4,1)
    \psplot{2}{3}{2 x 1 sub div}
    \psline(1,2)(2,2)
    \psplot{.25}{.3333}{1 x div}
    \psplot[linecolor=red]{.3333}{1}{1 x div 2 max}
    \psplot{.3333}{1}{2 x div 2 x sub div .01 add}
    \psline[linestyle=dotted](.3333, 3.6)(.3333,.1)
    \psline(.3333,-.05)(.3333,.05)
    \rput(.3333,-.3){$\tfrac{1}{3}$}
    \psset{linewidth=0.01}
    \rput(1.7,3.8){$\frac2{\alpha(2-\alpha)}$} \psline{->}(1.4,3.4)(.8,2.2)
    \rput(1,3.8){$\frac1{\alpha}$} \psline{->}(0.9,3.6)(.4,2.6)
    \rput(3,2.5){$\frac2{\alpha-1}$} \psline{->}(2.8,2.2)(2.5,1.5)
    \endpspicture
    \caption{If the random walk $R$ has $\alpha$ finite moments, then the
    diameter of the resulting $n$-particle aggregate grows as $n^\beta$.}
  \label{fig:graph}
\end{figure}

See Figure~\ref{fig:graph} for the various regimes described in
\thmref{all}.  In the regime $\alpha\in(\frac{1}{3},1)$, our results do not
allow us to determine precisely the rate of growth, we conjecture that the
lower bound gives the correct behaviour, and have therefore indicated it in
the figure in red.  Part I focused mainly on the recurrent case, namely
$\alpha>1$.  In this paper we focus on the transient
case, namely $\alpha<1/3$ and $\alpha\in[1/3,1)$.

The most interesting feature of the graph is of course the phase
transitions: at $3$, $2$ and in at least one unknown point in
$\left[\frac{1}{3},1\right]$, probably at $\frac{1}{2}$.  We discussed the
phase transitions at 3 and 2 in the introduction of part I quite
thoroughly, so we will not repeat this here.  Let us reiterate one point
already made in part I nonetheless: it seems as if there is no change in
the behaviour when passing from the recurrent regime to the transient
regime (at $\alpha=1$).  If our conjecture is correct then the growth of
the aggregate is uniformly $n^{2+o(1)}$ throughout the interval
$[\frac{1}{2},2]$ and one cannot tell from the aggregate the difference
between recurrent and transient random walks.  Note that these are not even
quite the same processes on both sides --- in the transient case one needs
to condition on the particles hitting the aggregate, which makes the
``gluing measure'' quite different.  Even if our conjecture is false, our
upper bound still shows that the transition is smooth, hence the phase
transition at $\alpha=1$, if it exists at all, is very tame.

\subsection{Proof ideas}

The key is that the growth of the aggregate does not come by gradual
accretion but rather by isolated large jumps (in fact this holds for all
$\alpha < 3$, see part I for details).  Hence to understand the behaviour
of $A_n$ one needs to consider events whose probability is of the order of
$1/n$ --- about one such event will happen in the construction of $A_n$,
and this one rare event will dominate the growth of the aggregate.

We will show below that the probability that the aggregate $A_n$ grows by
at least $m$ in one step is approximately
\[
\frac{n m^{-\alpha}}{\capa A_n}
\]
where $\capa A_n$ is the \emph{capacity} of the aggregate with respect to
our random walk (see \S\ref{sec:not} for precise definitions).  The reason
that the capacity appears is that in order to define what does it mean for
a particle to perform ``a random walk from infinity'' we have to condition
on the particle hitting the set.  This conditioning gives the factor of the
capacity in the denominator, and is also the reason that the transient case
$0 < \alpha < 1$ is more difficult than the recurrent case discussed in
part I.  Understanding the capacity requires more detailed knowledge about
the structure of the aggregate.

If the largest single jump gives the diameter, then we should have
$\diam A_n\simeq m$ for the $m$ that corresponds to probability
$\frac1n$. Solving
$n m^{-\alpha}/\capa A_n = 1/n$ gives 
\[
\diam A_n \simeq \left(\frac{n^2}{\capa A_n}\right)^{1/\alpha}
\]
so an upper bound for the capacity gives a lower bound for the diameter and
vice versa.  There are two simple upper bounds for the capacity of a
general finite set.  The first is $\capa A_n \le n$ which gives $\diam A_n
\gtrsim n^{1/\alpha}$.  The second bound is $\capa A_n \le c(\diam
A_n)^{1-\alpha}$ (see Lemma~\ref{lem:capconv}) which gives $\diam A_n
\gtrsim n^2$.  This gives the lower bounds for $\alpha<1$ in \thmref{all}.

To test how good these two lower bounds are, consider a discrete, self-similar Cantor set.  It turns out that up to
constants, such a Cantor set has maximal possible capacity with respect to
its diameter and number of particles  
(see Theorem~\ref{thm:capa_Cantor} on page~\pageref{thm:capa_Cantor}).
This is surprising when one compares this to the structure of the harmonic
measure on the fractal which is complicated and involves various exponents
which, generally speaking, are not known. See Carleson \cite{C85} and
Makarov \cite{M98} for analysis of Cantor-like sets, and the beautiful
general results of Makarov \cite{M89} and Jones and Wolff
\cite{JW88}. See also \cite{B99}.

Now, if the aggregate were to behave similarly, then it would have nearly
maximal possible capacity which would lead to the minimal possible
diameter.  This would imply that the third phase transition is at
$\frac{1}{2}$.  If, on the other hand, the aggregate has a dense core with
a few additional far away particles which do not affect the capacity
significantly, we would get a much lower capacity, and hence higher
diameter.  In that case, the third phase transition could even be at $1$.

One regime where we can make this ``Cantor set vs.\ heavy core'' dichotomy
precise, and so derive matching upper bounds on the diameter, is when the
walk has less than $\frac{1}{3}$ moments.  In this regime the walk makes
enormous jumps --- by time $n$ of the aggregation we expect a jump of size
at least $n^3$, so a particle will be glued at large distance from the rest
of the aggregate.  Assume such a jump has happened, and call the resulting
particle $b_1$.  Let $b_2$ be the first time a particle coming from
infinity hits $\{b_1\}$.  Continuing this way we identify a subset $B$ in
the aggregate consisting of $b_1$ and its descendents.  A calculation then
shows that the subset $B$ grows very similar to a DLA --- the affect of the
rest of the aggregate is only a time-change which does not affect the
actual shape too much.

Since such large jumps occur at many scales, we think of DLA as a
stochastic Cantor set, where remote parts are identically distributed,
(though not identical, as in the usual Cantor set).  The crux of the
argument is in making this precise and proving this.  We show that the
probability that the existing part $A\setminus B$ affects the growth of the
subset $B$ or vice versa \emph{at all} is quite small.  This is not true
for $\alpha>\frac{1}{3}$.  For such $\alpha$, the amount of interaction
between the two parts, while small, is not zero, and we do not know how to
control it.

Of course, the actual proof entails a few complications. Since we lose
constants in various places, it is not enough to divide $A_n$ into 2 parts
and show that they are roughly identically distributed.  We therefore
divide $A$ into $\log n$ parts, show that (with high probability) they are
identically distributed and that the capacity of the union is close to the
sum of their capacities (again, because they are far away) this gives the
formula
\[
\capa A_n \simeq (\log n)\capa A_{n/\log n}.
\]
Now the loss of constants is immaterial and we get $\capa A_n \ge
n^{1-o(1)}$, and so the upper bound on the diameter.  Another complication
is that it seems difficult to work with the expectation of $\sum 1/\capa
A_n$ (this sum turns out to be the relevant quantity to study) because it
is difficult to control unusual events which might make this sum large.
Thus instead of expectations we work with medians and quantiles, and you
will see them strewn all over the proof.

The approach could also give improved bounds for some $\alpha>\frac{1}{3}$.
In fact, the condition required for the argument to apply is roughly that
$n^2(\diam A_n)^{\alpha-1}\lesssim 1$ (see Lemma~\ref{L:TSnu} on page
\pageref{L:TSnu} below).  This means that if $\diam A_n \gtrsim
n^{2/(1-\alpha)}$ then our argument will go through, leading to a
contradiction as the capacity grows linearly while the diameter is still
large.  In short, for $\alpha\in(\frac{1}{3},1)$ it is possible to show
that $\diam A_n \le n^{2/(1-\alpha)+o(1)}$ using essentially the same
argument as we use for $\alpha < \frac13$.  This estimate is better than
our stated upper bound in the interval $[\frac{1}{3},
\frac{1}{2}(3-\sqrt{5})]$ (approximately $0.38$).  Nevertheless, we will
not prove this estimate, as we do not believe it contributes additional
understanding to the problem.  We remark, though, that adding this extra
result would remove the discontinuity in the graph of our upper bound,
making it continuous (but not monotone).


\subsection{What should you read?}

This paper is more technical than part I, with a particular emphasis on the
case of $\alpha<\frac{1}{3}$.  If you are only interested in a tasting of
the ideas used, your best option is probably to switch to part I and read
\S{}3 there which handles one particular transient random walk whose
analysis is simpler and more geometric than other cases.

In Section \ref{sec:defs} we derive a general formula for the gluing
measure in the transient case.  Section \ref{sec:01} then discusses the
lower bound $D_n\ge n^{\beta+o(1)}$ which holds for all $0<\alpha<1$.
Finally Section \ref{sec:third} gives the proof for the case of
$\alpha<\frac{1}{3}$.  This paper is best read sequentially as each section
depends on all previous ones.

Finally, note that there is a part III \cite{partIII} which discusses the
infinite aggregate $A_\infty=\bigcup_{n=1}^\infty A_n$.

\subsection{Notations}
\label{sec:not}

For a subset $A\subset \Z$ we denote by $\diam A$ the diameter of $A$,
namely $\max(A) - \min(A)$.  If $x\in\Z$ we will denote by
$\rho(x,A)$ the point-to-set distance, namely $\min_{y\in A} |x-y|$.

Throughout this paper we denote by $R=(0=R_0, R_1, \dotsc)$ a random walk
on $\mathbb{Z}$.  We denote by $p_{x,y} = p_{0,y-x}$ the probability that
$R$, when starting from $x$ will go to $y$ in the first step.
For a given random walk $R$ and set $A$, let $T_A$ be the hitting
time of $A$, defined as
\[
  T_A = \inf \{ n>0 \text{ s.t. } R_n\in A \}.
\]
Note that $T_A>0$ even if the random walk starts in $A$.  Denote by
$E_A(x)$ the escape probability from $A$ i.e.\ $\P_x(T_A=\infty)$ (where
$\P_x$ denotes the law of the random walk started at $x$).  Denote by
$E_A^*(x)$ the escape probability for the reversed random walk (we will
usually consider symmetric random walks, in which case $E_A=E^*_A$).
Define the {\bf capacity} of a set by
\[
\capa A = \sum_{a\in A}E_A^*(x).
\]
See \S\ref{sec:capgen} for a detailed discussion of the capacity.

Denote by $G(x,y)$ the Green function of $R$ defined by
\[
 G(x,y)=\sum_{n=0}^\infty \P_x(R_n=y)
\]
Since $G$ is translation invariant we will often abbreviate $G(x,0)$ as
$G(x)$ and then $G(x,y)=G(x-y)$.  It turns out that regular behaviour of
the Green function is the most important property for our analysis.

By $C$ and $c$ we denote constants which depend only on the law of the walk
$R$ but not on the other parameters involved.  The same holds for constants
hidden within the notations $O$ and $o$.  In particular, we will usually
\emph{not} use them again the way they were used in theorem \ref{T:all}
that is, as random constants.  Generally $C$ and $c$ might take different
values at different places, even within the same formula. Normally, $C$ will 
pertain to constants which are ``big enough'' and $c$ to constants which
are ``small enough''.  By $X\approx Y$ we mean $cX<Y<CX$.  By $X\ll Y$ we
mean that $X=o(Y)$.  $\lfloor x\rfloor$ will denote the integer value of
$x$. By $\simeq$ and $\lesssim$ we do not mean anything in particular;
these are only used for the heuristic discussion in the introduction.

\section{Limits of gluing measure}
\label{sec:defs}

The random walks we consider will usually satisfy the following:

\begin{defn*}
  We say that a symmetric random walk is an $\alpha$-walk ($\alpha<1$) if
  $p_{0,x} \approx |x|^{-1-\alpha}$.
\end{defn*}

(As usual $\approx$ means ``bounded between two constants''.) For such
walks the Green function has the following behaviour:

\begin{lemma}\label{lem:G}
  Let $R$ be an $\alpha$-walk.  Then its corresponding corresponding Green
  function satisfies
  \begin{equation}
  G(x)\approx|x|^{\alpha-1}.\label{eq:Gxalpha}
  \end{equation}
\end{lemma}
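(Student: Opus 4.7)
The plan is to estimate $\P_0(R_n=x)$ for each $n$ and then sum over $n$, following the ``one big jump'' philosophy that pervades the paper. Heuristically, if $n\lesssim|x|^\alpha$ then the most likely way to reach $x$ in $n$ steps is by a single macroscopic jump, so $\P_0(R_n=x)\approx n|x|^{-1-\alpha}$; while if $n\gg|x|^\alpha$ the walk is in its diffusive stable regime, spread over scale $n^{1/\alpha}\gg|x|$, so $\P_0(R_n=x)\approx n^{-1/\alpha}$.

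First I would establish the two-sided local estimate
\[
\P_0(R_n=x)\approx\min\bigl(n^{-1/\alpha},\,n|x|^{-1-\alpha}\bigr),\qquad x\neq 0,\ n\ge 1.
\]
The first branch is a local central limit theorem: the rescaled walk $R_n/n^{1/\alpha}$ converges to a symmetric $\alpha$-stable law whose density is continuous and strictly positive at $0$. For a walk whose steps merely satisfy $p_{0,x}\approx|x|^{-1-\alpha}$, one verifies the LCLT via the characteristic function $\phi$, checking that $1-\phi(\theta)\approx|\theta|^\alpha$ near $0$ and $|\phi(\theta)|$ is bounded away from $1$ elsewhere; the former follows from our tail bound by splitting $\sum_y p_{0,y}(1-\cos\theta y)$ at $|y|=1/|\theta|$. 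The second branch is the one-big-jump estimate: to land at $x$ at time $n$ with $n\ll|x|^\alpha$ one takes a single step of size $\approx|x|$ at some time $j\le n$ (contributing $n\cdot|x|^{-1-\alpha}$) while the remainder lands in a window of size $O(n^{1/\alpha})$ around the target. Routes involving multiple moderately large jumps are much less likely, giving the matching upper bound.

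Next I would sum over $n$, splitting at $n\approx|x|^\alpha$:
\[
G(x)\approx\sum_{n=1}^{|x|^\alpha}n\,|x|^{-1-\alpha}\;+\;\sum_{n>|x|^\alpha}n^{-1/\alpha}.
\]
The first sum is $\approx|x|^{2\alpha}\cdot|x|^{-1-\alpha}=|x|^{\alpha-1}$. The second sum converges because $\alpha<1$ gives $1/\alpha>1$, and its value is $\approx(|x|^\alpha)^{1-1/\alpha}=|x|^{\alpha-1}$. Both regimes contribute the same order, yielding $G(x)\approx|x|^{\alpha-1}$ as claimed.

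The main obstacle is the local central limit theorem under the weak assumption $p_{0,x}\approx|x|^{-1-\alpha}$, as opposed to having a specific stable limiting distribution. Classical LCLTs of Stone and Williamson type need to be adapted, but the Fourier approach sketched above does go through because only the behaviour of $1-\phi$ near $0$ matters at the relevant scale. A secondary technical nuisance is the matching lower bound for $\P_0(R_n=x)$ in the crossover regime $|x|\approx n^{1/\alpha}$; here the LCLT gives a positive density bound provided the limiting stable density is strictly positive on compact sets, which is standard. Once these ingredients are in place the rest is just the summation above and no additional ideas are needed.
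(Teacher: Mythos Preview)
Your overall strategy --- establish the two-sided heat kernel bound $\P_0(R_n=x)\approx\min(n^{-1/\alpha},n|x|^{-1-\alpha})$ and then sum over $n$ --- is exactly what the paper does, and your summation is correct.  The difference is that the paper does not attempt to prove the heat kernel bound; it quotes it from Bass and Levin \cite{BL02}, whose Theorem~1.1 gives precisely this estimate under the hypothesis $p_{x,y}\approx|x-y|^{-1-\alpha}$ (even without translation invariance).  The only work the paper adds is removing their standing assumption $p_{x,x}=0$, which is a one-line observation.

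The gap in your proposal is the off-diagonal lower bound $\P_0(R_n=x)\ge cn^{-1/\alpha}$ for $|x|\lesssim n^{1/\alpha}$.  You appeal to an LCLT and to positivity of ``the limiting stable density'', but under the hypothesis $p_{0,x}\approx|x|^{-1-\alpha}$ (two-sided bounds only, not asymptotic equivalence) the walk need not be in the domain of attraction of any single stable law: $R_n/n^{1/\alpha}$ is tight but may fail to converge, so there is no limiting density to invoke.  From $1-\phi(\theta)\approx|\theta|^\alpha$ you do get the upper bound $\P_0(R_n=x)\le Cn^{-1/\alpha}$ and the on-diagonal lower bound $\P_0(R_n=0)\ge cn^{-1/\alpha}$ by direct Fourier inversion, but the oscillating factor $e^{-i\theta x}$ prevents the same argument from yielding the off-diagonal lower bound --- the two-sided control of $\phi$ does not control the cancellation.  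This is exactly why Bass and Levin's proof proceeds via Nash-type inequalities and parabolic Harnack/comparison arguments rather than Fourier analysis; these tools are robust to such coefficient irregularity in a way that characteristic-function methods are not.  So either cite \cite{BL02} as the paper does, or be prepared to reproduce a substantial piece of their machinery; the sketch you gave does not close the gap.
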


\begin{proof}
  We base our proof on a result of Bass \& Levin \cite{BL02}.  Let us
  recall the statement of Theorem~1.1 ibid.  Bass \& Levin do not assume
  that the walk is translation invariant, and their result holds in any
  dimension, though we specialize to $d=1$.  Given numbers $w_{xy}$
  satisfying $w_{xy}\approx |x-y|^{-1-\alpha}$ and $w_{xy}=w_{yx}$, define
  a Markov chain $R$ by its transition probabilities
  \[
  \P_x(R_1=y) = \frac{w_{xy}}{\sum_z w_{xz}}.
  \]
  We may therefore take $w_{xy} = p_{x,y}$.  A minor inconvenience is that
  Bass \& Levin assume that $w_{xx}=0$. Let us therefore assume it for our
  walk $R$ for a while, and remove this assumption in the end. Their
  results then state that
  \begin{equation}\label{eq:BL}
    \P_x(R_n = y) \approx \min(n^{-1/\alpha}, n |x-y|^{-1-\alpha})
  \end{equation}
  with the exception that the lower bound does not hold for $n=1$ and
  $x=y$ (again, because $w_{xx}=0$).

  Summing (\ref{eq:BL}) over $n$ immediately gives (\ref{eq:Gxalpha}).
  Thus we need only remove the restriction $p_{x,x} = 0$.  But this only changes
  $G$ by a constant, i.e.\ if for some $R$ we have $p_{x,x} > 0$ then
  we can define a walk $R'$ using
  \[
  \P(R'_1=x) = \begin{cases}
    \frac{p_{0,x}}{1-p_{x,x}} & x\ne 0, \\
    0 & x=0.
  \end{cases}
  \]
  and get that the corresponding Green function, $G'$, satisfies
  $G'(x) = G(x)/p_{x,x}$. This finishes the lemma.
\end{proof}

Let us remark that the results of Bass \& Levin are not restricted to the
transient case, they hold also for the recurrent case.  However, it is not
as straightforward to get information on the harmonic potential from the
estimate of the heat kernel, as it is for the Green function in the
transient case.

While on this topic we might remark that a number of results in the
non-reversible settings are known.  The results of Williamson \cite{W68}
are quite general, essentially requiring only that the walk $R$ is in the
domain of attraction of an $\alpha$-stable process, but they are restricted
to $\frac{1}{2}<\alpha<1$.  Le Gall and Rosen have some results for general
$\alpha$ \cite[Proposition 5.2]{GR91}.


Let $A\subset\Z$ be some set.  For a point $x\notin A$, we are interested
in the event $\{R_{T_A-1} = x\}$, that is the event that $x$ is the last
point visited before hitting $A$, and so is glued.  Since the random walks
we are interested in are transient we must condition on $T_A<\infty$.
Define
\[
\mu_y(x,a) = \P_y \bigl( R_{T_A}=a, R_{T_A-1}=x \, \vert\,  T_A<\infty \bigr).
\]
We are interested in the limit
\[
\mu(x,a) = \mu(x,a;A) = \lim_{y\to\infty}\mu_y(a,x),
\]
if it exists, as well as its integrated version
\[
\mu(x) = \mu(x;A) = \sum_{a\in A} \mu(x,a).
\]
The existence of these limits is strongly related to the existence of the
harmonic measure from infinity, which is defined as the limit of $\sum_x
\mu(x,a)$ (see Lemma~\ref{L:mu_trans} below).

It is possible that the harmonic measure does not exist. An example can be
constructed by examining a random walk supported on a very sparse set, for
example let $\P(R_1=2^{2^n}) \approx 1/n^2$ and $0$ otherwise. As this is
somewhat off-topic we will not provide a proof that this is indeed an
example. Instead, the following theorem characterises walks for which the
harmonic measure exists, and relates the harmonic measure to the escape
probabilities.

\begin{thm}\label{T:transient}
  The following are equivalent for a transient irreducible random walk on
  $\Z$:
  \begin{enumerate}
  \item\label{enu:allA} For any finite $A$, the harmonic measure exists and
    equals the normalized escape probabilities.
  \item\label{enu:someA} The harmonic measure exists for some finite $A$
    with $|A|>1$,
  \item\label{enu:GxGx+a} For any $a$ the Green function satisfies
    $\lim_{|x|\to\infty} \frac{G(x+a)}{G(x)} = 1$.
  \end{enumerate}
\end{thm}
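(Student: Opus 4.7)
The plan is to derive all three equivalences from a single time-reversal identity. Writing $H_A(y,a) = \P_y(T_A<\infty,\, R_{T_A}=a)$ and letting $H_A^*(a,b)$ be the analogous quantity for the time-reversed walk $R^*$, I first establish
\begin{equation*}
  H_A(y,a) \;=\; G(y,a) - \sum_{b\in A} H_A^*(a,b)\,G(y,b), \qquad a\in A,\ y\notin A, \tag{$\star$}
\end{equation*}
by noting that a forward path $y=R_0,R_1,\dotsc,R_n=a$ with $R_j\notin A$ for $1\le j<n$ has the same probability as its time-reversal starting at $a$, which gives $H_A(y,a) = \sum_{n\ge1}\P^*_a(R_n=y,\,T^*_A>n)$; a first-return-to-$A$ decomposition on the right extracts $(\star)$. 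Summing $(\star)$ over $a\in A$ and using $\sum_b H_A^*(a,b) = 1-E_A^*(a)$ yields the companion
\begin{equation*}
  \P_y(T_A<\infty) \;=\; \sum_{a\in A} E_A^*(a)\,G(y,a). \tag{$\star\star$}
\end{equation*}

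For (iii)$\Rightarrow$(i), I substitute $G(y,c) = (1+o(1))G(y)$ uniformly over $c\in A$ into $(\star)$ and $(\star\star)$ to get $H_A(y,a) = (E_A^*(a)+o(1))G(y)$ and $\P_y(T_A<\infty) = (\capa A + o(1))G(y)$, whose ratio is the harmonic measure $E_A^*(a)/\capa A$. The implication (i)$\Rightarrow$(ii) is immediate.

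For (ii)$\Rightarrow$(iii) my plan is to invert $(\star)$. The matrix $C = (H_A^*(a,b))_{a,b\in A}$ is strictly substochastic: $(I-C)\mathbf{1}$ is the vector $(E_A^*(a))_{a\in A}$, which is strictly positive by transience and irreducibility. So $I-C$ is invertible with nonnegative inverse $B = \sum_{n\ge 0} C^n$, and $(\star)$ rearranges to $G(y,a) = \sum_{a'} B_{a,a'}\,H_A(y,a')$. Dividing by $\P_y(T_A<\infty)$ and applying (ii) gives $G(y,a)/\P_y(T_A<\infty) \to (B\mu)_a =: \ell(a)$, and hence $G(y,a_1)/G(y,a_2) \to \ell(a_1)/\ell(a_2)$ for any $a_1,a_2\in A$. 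Setting $L(d) := \lim_{|x|\to\infty} G(x+d)/G(x)$, this establishes $L$ on $A-A$, and composing limits makes $L$ multiplicative on the subgroup of $\Z$ generated by $A-A$. The boundedness $G(x)\le G(0)$ then forces $L\equiv 1$ on this subgroup, since any $L(d)\ne 1$ would make $G(x+kd)\approx L(d)^{k}G(x)$ blow up along an appropriate sequence $k\to\infty$.

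The main obstacle is to extend $L\equiv 1$ from the subgroup generated by $A-A$ to all of $\Z$. For this I plan to combine the convolution identity $G(y) = \delta_{0,y} + \sum_z p_{0,z}\,G(y-z)$ with irreducibility, which says the support of $p_{0,\cdot}$ generates $\Z$: passing to the limit inside the convolution identity propagates convergence of $L(d)$ to new $d$, and any bounded positive function constant on an infinite subgroup and satisfying this convolution equation must be globally constant, by a Choquet--Deny-type rigidity, so $L\equiv 1$ on all of $\Z$. This closes the cycle of implications.
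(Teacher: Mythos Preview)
Your proof largely parallels the paper's. Your identity $(\star)$ is exactly Spitzer's formula $H_A(x,\cdot)=G(x,\cdot)(I-\Pi)$ in disguise, since $H_A^*(a,b)=\Pi(b,a)$ by path reversal, and your treatments of (iii)$\Rightarrow$(i) and of the matrix inversion in (ii)$\Rightarrow$(iii) match the paper's. Two minor slips: summing $(\star)$ over $a$ actually requires the \emph{column} sum $\sum_a H_A^*(a,b)=1-E_A(b)$, so the exact $(\star\star)$ should carry $E_A$ rather than $E_A^*$ --- but only the asymptotic version is used, and that follows directly from $(\star)$. Also, your claim that $(I-C)\1>0$ entrywise can fail (e.g.\ biased nearest-neighbour walk with $A=\{-1,0,1\}$ has $E_A^*(0)=0$); invertibility nevertheless holds because the entries of $\sum_{n\ge 0}C^n$ are dominated by Green's function values and hence finite. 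The paper's ``strictly diagonally dominant'' has the identical defect.

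The genuine gap is the final step of (ii)$\Rightarrow$(iii), extending $L\equiv 1$ from the subgroup $b\Z$ generated by $A-A$ to all of $\Z$. Your appeal to ``Choquet--Deny-type rigidity'' does not go through as written: you have not exhibited a \emph{bounded harmonic} function to which such a theorem applies. The function $G$ satisfies $G=\delta_0+p*G$ but is certainly not constant on cosets of $b\Z$; the putative limit $L$ is exactly the object whose existence you are trying to establish, so invoking its harmonicity is circular; and you cannot ``pass to the limit'' inside $G(x+a)=\sum_z p(z)G(x+a-z)$ without a priori bounds on the ratios $G(x+a-z)/G(x)$, which you do not have for $z$ ranging over a possibly unbounded support. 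The paper supplies the missing idea: given $a\notin b\Z$, let $T$ be the first time the walk started at $0$ lands in the residue class $a\bmod b$ (a.s.\ finite by irreducibility), and use strong Markov to write $G(x+a)$ as an average of $G(x+a-y)$ over the tight distribution of $R_T$; since each $a-y\in b\Z$, the already-established case gives $\liminf_{|x|\to\infty} G(x+a)/G(x)\ge 1$, and applying the same to $-a$ yields the matching $\limsup\le 1$.
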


When the conditions of this theorem hold we say that $R$ has a harmonic
measure.

Recall that the escape probabilities are denoted by $E_A(x) =
\P_x(T_A=\infty)$, and that this definition is non-trivial also for
starting points in $A$. Recall also that $E^*_A(a)$ is the analogous
probability for the reversed random walk (for symmetric random walk the two
are the same). The capacity of a set $A$ is defined as $\capa(A) =
\sum_{a\in A} E^*_A(a)$.

Note that the result is stated also for asymmetric walks even though we are
mostly interested in the symmetric case. Further it can be easily
generalised to the case of $\Z^d$ --- the only difference being that in
Clause \ref{enu:someA} we must require that $A$ is not a subset of any
affine subspace of $\R^d$.

Part of the proof (\ref{enu:GxGx+a}$\implies$\ref{enu:allA}) follows the
proof of P26.2 of \cite{S76} which does the same for random walks in
$\Z^3$.

\begin{proof}[Proof of \thmref{transient}]
  Let $\Pi$ be the $A$-indexed matrix, with $\Pi(a,b)=\P_a(R_{T_A}=b)$. Let
  $G(x-\cdot)$ be the row vector $\{G(x-a)\}_{a\in A}$.  We use the
  following identity for the unnormalized hitting measure
  $H_A(x,a) = \P_x(R_{T_A}=a)$

  \begin{prop}\label{prop:HGPI}
    $\displaystyle H_A(x,a) = \sum_{z\in A} G(x-z) (I-\Pi)(z,a)$.
  \end{prop}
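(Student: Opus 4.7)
The plan is to interpret both sides as expected counts of visits of the walk $R$ to the site $a$, with $\sum_{z\in A}G(x-z)\Pi(z,a)$ capturing exactly those visits that are \emph{not} the first visit of $R$ to $A$, so that the difference $G(x-a)-\sum_{z}G(x-z)\Pi(z,a)$ equals the probability that the first visit to $A$ lands at $a$, which is $H_A(x,a)$. Implicit in the statement (and needed for it to hold) is $x\notin A$, which I assume throughout; this is precisely the regime relevant for the definition of $\mu(x,a)$.

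First I would expand the right-hand sum using the series definition of $G$,
\begin{equation*}
\sum_{z\in A}G(x-z)\Pi(z,a)
= \sum_{n=0}^{\infty}\sum_{z\in A}\P_x(R_n=z)\,\P_z(R_{T_A}=a),
\end{equation*}
and recognise the summand, via the strong Markov property at time $n$, as the probability under $\P_x$ that $R_n=z$ and the next visit of $R$ to $A$ strictly after time $n$ lands at $a$. Enumerating the hitting times of $A$ as $t_1<t_2<\dots<t_k$ (finitely many almost surely, by transience, with $t_1=T_A$), and discarding the $n=0$ contribution because $x\notin A$, the sum becomes
\begin{equation*}
\sum_{z\in A}G(x-z)\Pi(z,a)
= \E_x\bigl|\{\,j:2\le j\le k,\ R_{t_j}=a\,\}\bigr|.
\end{equation*}

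Separately, since $x\notin A\ni a$ every visit of $R$ to $a$ occurs at one of the times $t_j$, so $G(x-a)=\E_x|\{j:1\le j\le k,\,R_{t_j}=a\}|$. Subtracting,
\begin{equation*}
G(x-a)-\sum_{z\in A}G(x-z)\Pi(z,a)=\P_x(R_{t_1}=a)=H_A(x,a),
\end{equation*}
which is the asserted identity after rewriting the left-hand side as $\sum_{z\in A}G(x-z)(I-\Pi)(z,a)$. The main points to watch are the bookkeeping around $n=0$ (handled by the assumption $x\notin A$) and the interchange of summations in the Markov step; the latter is harmless since transience makes $k<\infty$ almost surely and $G$ is finite, so every sum converges absolutely. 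I do not foresee any substantive obstacle — this is a standard first-passage decomposition in the spirit of the $\Z^3$ argument in \cite{S76} cited by the authors.
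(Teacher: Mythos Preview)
Your proof is correct and follows essentially the same probabilistic decomposition the paper sketches: $G(x,a)$ counts all visits to $a$, while $\sum_{z\in A}G(x,z)\Pi(z,a)$ counts those visits to $a$ that are not the first visit to $A$ (partitioned by the location $z$ of the previous visit to $A$), so their difference is $H_A(x,a)$. Your explicit bookkeeping with the successive hitting times $t_1<\dots<t_k$ and the remark that $x\notin A$ is needed simply flesh out the paper's two-line sketch.
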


  A proof can be found in \cite[P25.1]{S76}. For the reader's convenience,
  let us indicate a simple probabilistic argument (Spitzer's proof is more
  analytic in nature).  Examine the random walk restricted to $A$: The
  probability to hit $A$ at $a$ is the expected number of times $a$ is
  visited minus the number of visits that are not the first visit to $A$.
  The former is $G(x,a)$; the latter can be partitioned according to the
  location of $z$: the previous visit to $A$.

  \ref{enu:GxGx+a}$\implies$\ref{enu:allA}: By Proposition~\ref{prop:HGPI}
  and our assumption \ref{enu:GxGx+a},
  \[
  \lim_{|x|\to\infty}\frac{H_A(x,a)}{G(x)}=\lim_{|x|\to\infty}\sum_{z\in
    A}\frac{G(x-z)}{G(x)}(I-\Pi)(z,a)
  \stackrel{\textrm{\ref{enu:GxGx+a}}}{=}
  \sum_{z\in A}(I-\Pi)(z,a).
  \]
  The sum of column $a$ in $I-\Pi$ is $E^*_A(a)$, so 
  \[
    \lim_{|x|\to\infty} \frac{H_A(x,a)}{G(x)}  =
    E_A^*(a)\qquad\forall a\in A.
  \]
  Summing over $a$ gives
  \begin{equation}\label{eq:hit_prob}
    \P_x(T_A<\infty) = G(x) \cdot \capa(A) \cdot (1+o(1)).
  \end{equation}
  Which gives the required:
  \begin{align*}
    \lim_{|x|\to\infty}\P_x(R_{T_A}=a
    \,|\,T_A<\infty)&=
    \lim_{|x|\to\infty}\frac{H(x,a)}{\P_x(T_A<\infty)}\\
    &= \lim_{|x|\to\infty}\frac{H(x,a)}{G(x)\capa(A)(1+o(1))}
    = \frac{E_A^*(a)}{\capa(A)}.
  \end{align*}

  Since \ref{enu:allA}$\Rightarrow$\ref{enu:someA} is obvious, it remains
  to show that \ref{enu:someA} implies \ref{enu:GxGx+a}. For any finite
  $A$, $I-\Pi$ is strictly diagonally dominant and therefore invertible (by
  the Levy-Desplanques theorem, see e.g.\ \cite[Theorem
  6.1.10]{HJ85}). Thus if the harmonic measure exists (which means, by
  definition, that $\lim_{|x|\to\infty}\P(R_{T_A}=a\,|\,T_A<\infty)$
  exists), then
  \begin{align*}
    \lim_{|x|\to\infty}\frac{G(x-a)}{\P_x(T_A<\infty)}
    &=\lim_{|x|\to\infty}\sum_{z\in A}
    \frac{H_A(x,z)}{\P_x(T_A<\infty)}(I-\Pi)^{-1}(z,a)\\
    &=\sum_{z\in A}\mu(z)(I-\Pi)^{-1}(z,a)
  \end{align*}
  and in particular the limit on the left-hand side exists. Hence also the
  limit of $G(x-a)/G(x-b)$ exists, for all $a,b\in A$.

  Assume without loss of generality that $0,b\in A$ and then the limit of
  $G(x-b)/G(x)$ exists.  Since the limit is the same for positive and
  negative $x$, and since $0 < G(x) \le G(0)$ is bounded, the limit must be
  1 --- if it were smaller than 1 then $G$ would diverge exponentially in
  the direction of $b$ while if it were larger than 1 then $G$ would
  diverge exponentially in the other direction.

  Clearly $\frac{G(x-b)}{G(x)} \to 1$ implies that $\frac{G(x-kb)}{G(x)}
  \to 1$ for any $k$.  To show that $\frac{G(x+a)}{G(x)} \to 1$ for general
  $a$ we need irreducibility for the first time.  Let $T$ be the first time
  at which $R_T\equiv a \pmod b$, and note that $T$ is a.s.\ finite. Fix
  $\eps>0$ and let $N=N(a,\eps)$ be such that $\P(|R_T-a|>N)<\eps$. Take
  $M=M(a,\eps,N)$ large enough that whenever $|kb|<N$ and $x>M$ we have
  $G(x+kb) > (1-\eps)G(x)$.

  By the Markov property at $T$, for $|x|>M$.
  \begin{align*}
    G(x+a)
    &= \sum_{y\equiv a \textrm{ mod } b} \P(R_T=y)G(x+a-y) \\
    &\ge \sum_{\substack{|y|<N \\ y\equiv a\textrm{ mod } b}} \P(R_T=y)(1-\eps)G(x)\\
    &\ge (1-\eps)^2 G(x).
  \end{align*}
  Since $\eps$ was arbitrary, $\liminf_x G(x+a)/G(x) \ge 1$. Since the same
  holds for $-a$ this completes the proof.
\end{proof}

We now come to the key connection between escape probabilities and the DLA
gluing measure.

\begin{lemma} \label{L:mu_trans}
  If the harmonic measure for a transient random walk $R$ exists, then
  the limit $\mu=\lim_y \mu_y$ exists and
  \begin{equation} \label{eq:repmu}
    \mu(x,a) = \frac{p_{x,a} E^*_A(x)}{\capa(A)}.
  \end{equation}
\end{lemma}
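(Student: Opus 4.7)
The plan is to separately analyze the numerator and the denominator of $\mu_y(x,a)$ and let $y\to\infty$. The denominator $\P_y(T_A<\infty)$ is already handled by equation~(\ref{eq:hit_prob}) and equals $G(y)\capa(A)(1+o(1))$. For the numerator $\P_y(R_{T_A}=a,R_{T_A-1}=x)$, I would decompose on the value of $n=T_A-1$: summing over $n\ge 0$ and applying the Markov property at time $n$ (using $x\notin A$ and $a\in A$) gives
\[
\P_y(R_{T_A}=a,R_{T_A-1}=x) = p_{x,a}\cdot G^A(y,x),
\]
where $G^A(y,x) = \sum_{n\ge 0}\P_y(R_n=x,T_A>n)$ is the Green function of the walk killed on hitting $A$.

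What remains is to prove $G^A(y,x)/G(y)\to E^*_A(x)$ as $|y|\to\infty$. For this I would use the standard last-hit decomposition
\[
G^A(y,x) = G(y,x) - \sum_{z\in A}H_A(y,z)\,G(z,x),
\]
obtained by splitting the expected number of visits to $x$ into those strictly before $T_A$ and those at or after $T_A$ (the latter contributing $G(z,x)$ given $R_{T_A}=z$, by strong Markov). Dividing by $G(y)$ and invoking condition~(iii) of Theorem~\ref{T:transient} (so that $G(y,x)/G(y)=G(y-x)/G(y)\to 1$) together with the asymptotic $H_A(y,z)/G(y)\to E^*_A(z)$ that was derived in the course of proving that theorem, yields
\[
\lim_{|y|\to\infty}\frac{G^A(y,x)}{G(y)} = 1 - \sum_{z\in A}E^*_A(z)\,G(z,x).
\]

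The main step, and the one I expect to require the most thought, is identifying this right-hand side with $E^*_A(x)$. The cleanest way is a last-exit decomposition applied to the time-reversed walk: for $x\notin A$, partitioning a trajectory of $R^*$ started at $x$ that eventually hits $A$ according to its last visit to $A$ and using the Markov property at that time gives
\[
\P^*_x(T^*_A<\infty) = \sum_{z\in A}G^*(x,z)\,E^*_A(z) = \sum_{z\in A}G(z,x)\,E^*_A(z),
\]
since $G^*(x,z)=G(z,x)$. Because $\P^*_x(T^*_A<\infty)=1-E^*_A(x)$, the displayed limit equals $E^*_A(x)$. Combining the two asymptotics gives $\mu(x,a)=p_{x,a}E^*_A(x)/\capa(A)$, as claimed.
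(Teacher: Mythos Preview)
Your argument is correct. Both you and the paper reduce the numerator to $p_{x,a}\,G^A(y,x)$ (the paper writes this as $p_{x,a}F(y)/(1-F(x))$ with $F(z)=\P_z(T_x<T_A)$, which is the same quantity), and both then need the asymptotic $G^A(y,x)/G(y)\to E^*_A(x)$. The decompositions used to obtain this asymptotic are genuinely different, however. The paper partitions visits to $x$ according to the \emph{previous} visit to $A\cup\{x\}$, obtaining an expression in the $F$-values, and then identifies $F(a)/(1-F(x))=\P^*_x(R^*_{T_A}=a)$ by reversing each path segment. You instead use the first-entrance decomposition $G^A(y,x)=G(y,x)-\sum_z H_A(y,z)G(z,x)$, which lets you recycle the limit $H_A(y,z)/G(y)\to E^*_A(z)$ already computed inside Theorem~\ref{T:transient}, and then close with the last-exit identity $\sum_{z\in A}G(z,x)E^*_A(z)=1-E^*_A(x)$ for the reversed walk. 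Your route is arguably more economical in that it reuses the hitting-measure asymptotic verbatim and invokes only standard Green-function identities; the paper's route has the mild advantage of yielding the refined information $\P^*_x(R^*_{T_A}=a)=F(a)/(1-F(x))$ along the way, though this is not used elsewhere.
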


We remind the reader that $p_{x,a}$ is the probability of making a single
step from $x$ to $a$.

\begin{proof}
  Define $F(z) = \P_z(T_x<T_A)$ to be the probability that $x$ is hit from
  $z$ before $A$ (and in particular $T_x<\infty$). Start a random walk at
  $y$. One can partition visits to $x$ according to the previous visited
  site of $A\cup\{x\}$ to get
  \[
  G(y,x) = G(y-x) = F(y) + G(y-x)F(x) + \sum_{a\in A} G(y-a)F(a).
  \]
  On the other hand, the expected number of visits to $x$ before visiting
  $A$ is given by $\frac{F(y)}{1-F(x)}$. Let $M_{x,a}$ be the event that
  the random walk hits $A$ by a step from $x$ to $a$, then
  \begin{align*}
    \P_y(M_{x,a}) &= \frac{F(y)p_{x,a}}{1-F(x)} \\
    &= p_{x,a} \cdot
       \left( G(y-x) - \frac{1}{1-F(x)}\sum_{a\in A} G(y-a)F(a) \right).
  \end{align*}
  Using \eqref{eq:hit_prob} to approximate $\P_y(T_A<\infty)=G(y)\capa(A)(1+o(1))$ and condition
  \ref{enu:GxGx+a} of \thmref{transient} to eliminate the ratio of Green
  function values as $y\to\infty$ one finds
  \[
  \P_y(M_{x,a}\,|\,T_A<\infty) = \frac{p_{x,a}}{\capa(A)} \cdot
          \left( 1 - \frac{1}{1-F(x)}\sum_{a\in A} F(a)\right)(1+o(1))
  \]
  (here $o(1)$ is as $|y|\to\infty$).
  
  Next we relate this to the reversed escape probability from $x$. Run a
  reversed random walk from $x$. Paths that hit $A$ at $a$ can be broken
  into some number of loops where the walk returns to $x$ followed by a
  path from $x$ to $a$ that avoids $A\cup\{x\}$. The reverse of a loop
  rooted at $x$ is just a loop; the reverse of the last segment is one of
  the terms contributing to $F(a)$.  Thus we have
  \[
  \P^*_x(\text{hit $A$ at $a$}) = \frac{F(a)}{1-F(x)}.
  \]
  Summing over $a\in A$ gives the probability of hitting $A$ and so
  \[
  \P_y(M_{x,a}\,|\,T_A<\infty)= \frac{p_{x,a}}{\capa(A)} E^*_A(x)(1+o(1)).
  \qedhere
  \]
\end{proof}




We are finally ready to give the formal definition of the DLA process for
transient random walks:

\begin{defn}
  Let $R$ be a transient random walk with harmonic measure on $\Z$. The \emph{DLA process with respect to
    $R$} is a Markov chain of random sets $A_1=\{0\},A_2,\dotsc$ such that
  for any finite $A \subset \Z$, and $x\in\Z\setminus A$ and any $n>0$,
  \[
  \P(A_{n+1}=A\cup\{x\}\,|\,A_n=A) = \sum_{a\in A} \mu(x,a;A)
  \]
  where $\mu$ is defined by (\ref{eq:repmu}).  We denote $D_n=\diam A_n$
  and $\F_n$ the minimal $\sigma$-field generated by $A_1,\dotsc,A_n$.
\end{defn}

\section{Inequalities}
\label{sec:01}

Recall the gluing formula (\ref{eq:repmu}). It turns out that the most
crucial term for understanding the DLA growth is the capacity $\capa(A)$
appearing in the denominator. In \S\ref{sec:capgen} we will prove estimates
for the capacity of any finite set in $\Z$. In \S\ref{sec:capdiam} we will
use these estimates to get first bounds on the diameter of the aggregates.
Henceforth, all walks are assumed to be symmetric, and we will not remind
this fact again.

\subsection{General inequalities for capacity}
\label{sec:capgen}

Before embarking on estimates of the capacity of our aggregate, let
us recall some general facts about this notion. The capacity of a set can be
defined in two equivalent ways. The first (which we used to prove
(\ref{eq:repmu})) is as the sum of the escape probabilities. The second
is a variational definition:
\begin{equation}\label{eq:variational}
\capa(A) = \sup_{\psi\in\mathcal{P}(A)}\frac{1}{\|\psi*G\|_{\infty}},
\end{equation}
where $\mathcal{P}(A)$ is the set of all probability measures on $A$ (or
simply all $\psi:A\to[0,1]$ with $\sum\psi(a)=1$), $G$ is the Green
function of the walk and $\psi*G$ denotes the usual convolution on $\Z$
i.e.
\[
(\psi*G)(x):=\sum_{y}G(x-y)\psi(y).
\]
See \cite[P25.10]{S76} for the equivalence of the two definitions. There is a
unique measure $\psi_0$ satisfying the supremum which is called \emph{the
  equilibrium charge}. It satisfies that $\psi_0 * G$ is in fact constant over
$A$ and is characterized by this fact. In fact, the equilibrium charge is the
normalized escape probabilities, namely
\[
\psi_0(x)=\frac{\P_x(T_A=\infty)}{\sum_{a\in A}\P_a(T_A=\infty)}.
\]
See \cite[chap.\ 25]{S76} for an overview and proof of the equivalence of
the two definitions of $\psi_0$.  We start with an application of capacity to show that
starting far from a set suffices for a good probability of avoiding it.

\begin{lemma}\label{lem:tran_hit}
  Let $R$ be an $\alpha$-walk. For any $\lambda>0$
  \[
  \inf \left\{ E_A(x) : \rho(x,A) > \lambda \diam A \text{ and } x>\max(A)
  \right\} > 0.
  \]
\end{lemma}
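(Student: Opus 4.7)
The plan is to use the exact identity
$$\P_x(T_A<\infty)=\sum_{a\in A}G(x-a)\,E^*_A(a),$$
which follows from Proposition~\ref{prop:HGPI} by summing over $a$ and using that the row sums of $I-\Pi$ equal $E^*_A$ (exactly the computation that already appears in the proof of \thmref{transient}), combined with the Green-function bound $G(y)\approx|y|^{\alpha-1}$ from Lemma~\ref{lem:G} and the general capacity estimate $\capa A\leq C(\diam A)^{1-\alpha}$ from Lemma~\ref{lem:capconv}. The lemma then reduces to bounding $\P_x(T_A<\infty)$ uniformly strictly below $1$.

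First I would note that it suffices to treat intervals. If $J:=[\min A,\max A]$, then $A\subseteq J$ gives $T_J\leq T_A$, hence $E_A(x)\geq E_J(x)$, while $\rho(x,J)=\rho(x,A)>\lambda\diam J$ and $x>\max J$ are preserved; so one may assume $A=[0,d]$. For $\lambda$ above some threshold $\Lambda_0$ depending only on the walk, the crude estimate
$$\P_x(T_A<\infty)\leq C\rho^{\alpha-1}\cdot\capa A\leq C'\lambda^{\alpha-1}\leq\tfrac12$$
already gives $E_A(x)\geq\tfrac12$, which settles the easy regime.

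The main difficulty is the range $\lambda\in(0,\Lambda_0)$, where $\rho$ is comparable to $d$ and the crude bound exceeds $1$. Here I would retain the $a$-dependence $G(x-a)\approx(x-a)^{\alpha-1}$ and combine it with a pointwise control $E^*_A(a)\lesssim d^{-\alpha}$ on the interior of the interval — the correct order, matching $\capa A/|A|\approx d^{-\alpha}$ — to obtain an estimate of the form
$$\P_x(T_A<\infty)\lesssim d^{-\alpha}\sum_{a=0}^{d}(x-a)^{\alpha-1}\approx\frac{(1+\lambda)^\alpha-\lambda^\alpha}{\alpha},$$
which is strictly smaller than the trivial bound $1$ for any $\lambda>0$ (the function $\lambda\mapsto(1+\lambda)^\alpha-\lambda^\alpha$ is strictly decreasing on $(0,\infty)$ with limit $1$ at the origin). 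The hard part will be justifying the pointwise estimate $E^*_A(a)\lesssim d^{-\alpha}$: this can fail near the two endpoints of $A$, where $E^*_A(a)$ may be of order one. The plan is therefore to split the sum into a near-endpoint contribution, handled by the bound $G(x-a)\leq C\rho^{\alpha-1}$ applied to the $O(1)$ such sites, and an interior contribution treated by the integral estimate above; in the remaining regime $\rho=O(1)$ one must additionally perform a first-step analysis that exploits the positive ($d$-independent) probability that a single step of the $\alpha$-walk lands at distance much greater than $d$ from $A$, reducing back to the large-$\lambda$ case after one step.
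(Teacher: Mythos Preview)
Your reduction to intervals and your treatment of the large-$\lambda$ regime are correct and essentially equivalent to the paper's: both amount to $\P_x(T_A<\infty)\le C\rho^{\alpha-1}\capa A\le C\lambda^{\alpha-1}\le\tfrac12$ for $\lambda\ge\Lambda_0$. Your first-step remark for the case $\rho=O(1)$ is also fine, but note that it only handles bounded $d$ (since $\rho>\lambda d$).

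The genuine gap is in the regime $\lambda\in(0,\Lambda_0)$ with $d$ large. Your displayed estimate
\[
\P_x(T_A<\infty)\;\lesssim\; d^{-\alpha}\sum_{a=0}^{d}(x-a)^{\alpha-1}\;\approx\;\frac{(1+\lambda)^\alpha-\lambda^\alpha}{\alpha}
\]
hides two multiplicative constants: one from $G(y)\approx|y|^{\alpha-1}$ and one from the claimed $E^*_A(a)\lesssim d^{-\alpha}$. What you actually obtain is $\P_x(T_A<\infty)\le C\bigl((1+\lambda)^\alpha-\lambda^\alpha\bigr)$ for some unspecified $C$. The observation that $(1+\lambda)^\alpha-\lambda^\alpha<1$ is then irrelevant: for small $\lambda$ this quantity is close to $1$, and multiplied by any $C>1$ the bound exceeds $1$ and says nothing. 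There is no mechanism in your argument for the constants to cancel --- Lemma~\ref{lem:G} only gives $G$ up to unspecified constants, and you have no sharp asymptotic for $E^*_A(a)$ either. (Separately, the claim that $E^*_A(a)\lesssim d^{-\alpha}$ fails outside ``$O(1)$ sites'' is also not right: for $a$ at distance $r$ from an endpoint one only has $E^*_A(a)\lesssim r^{-\alpha}$, so the bad region has order $d$, not $O(1)$. This is secondary, since even granting the interior bound the constant issue above remains fatal.)

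The paper sidesteps the constant problem entirely by a reflection trick. With $A=[-k,0]$ and $w>\lambda k$, let $I=[2w,2w+k]$ be the mirror image of $A$ in $w$. Symmetry of the walk gives $\P_w(T_A<T_I)\le\tfrac12$, and on the complementary event one is either already escaped or sits in $I$, at distance at least $2w>2\lambda k$ from $A$. This yields the functional inequality $\varphi(\lambda)\ge\tfrac12\varphi(2\lambda)$, which iterates to reduce any $\lambda>0$ to the already-established case $\lambda\ge\Lambda_0$. No control of constants in $G$ or $E^*_A$ is needed.
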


\begin{proof}
  By translation invariance and since $E_A(x)$ is monotone in $A$, we may
  assume $A=[-k,0]$ for some $k$ (and then $\diam A=k$).
  Define
  \[
  \varphi(\lambda) = \inf_{k,w>\lambda k}  \left\{ E_{[-k,0]}(w) \right\}.
  \]
  We will first prove $\varphi(\lambda_0)>0$ for some $\lambda_0$ large
  enough. Let $\psi$ be the equilibrium charge of $[-k,0]$, and let
  $p=\psi*G$. By \cite[T25.1]{S76},
  \[
  1-E_{[-k,0]}(w) = \frac{p(w)}{p(0)} \quad\forall w\not\in[-k,0].
  \]
  (Note that $p(\cdot)$ is constant on $[-k,0]$, so $0$ is not singled out
  here.)  Let us first assume that $w\geq\lambda_0 k$ for some $\lambda_0$
  sufficiently large.  Then we can write (using $G(x)\approx
  |x|^{\alpha-1}$, Lemma~\ref{lem:G})
  \begin{align*} p(w) & =\sum_{i=0}^{k}\psi(-i)G(w+i)\leq
    C\sum_{i=0}^{k}\psi(-i)(\lambda_0 k)^{\alpha-1}  \\
    & \leq C\lambda_0^{\alpha-1}\sum_{i=0}^{k}\psi(-i)(1+i)^{\alpha-1}\leq
    C\lambda_0^{\alpha-1}\sum_{i=0}^{k}\psi(-i)G(i)\\
    &=C\lambda_0^{\alpha-1}p(0)
  \end{align*}
  and we see that for $\lambda_0$ sufficiently large, independent of $k$,
  $E_{[0,k]}(w) \geq \frac{1}{2}$, and so $\varphi(\lambda_0) \geq
  \frac{1}{2}$.

  Next, fix some $\lambda>0$. Let $w>\lambda k$, and let $I$ be the
  reflection of $[-k,0]$ about $w$, i.e.\ the interval $[2w,2w+k]$. By
  symmetry we have
  \[\P_{w}(T_{[-k,0]} < T_I) \leq \frac{1}{2}.\]
  However, after $T_I$ (if it is finite), the probability to escape to
  infinity is at least $\varphi(2\lambda)$. Thus we get
  \[ \varphi(\lambda) \geq \frac{1}{2}\varphi(2\lambda).
  \]
  Since $\varphi(\lambda_0)>0$ and $\varphi$ is increasing, this implies
  $\varphi(\lambda)>0$ for all $\lambda>0$.
\end{proof}

\begin{lemma}\label{lem:mincapa}
  For an $\alpha$-walk $R$, the capacity of any finite set $A$ satisfies
  \[
  \capa(A) \geq c |A|^{1-\alpha}.
  \]
  If moreover $A\subset d\Z$, then 
  \[
  \capa(A) \geq c \min\{|A|,(d |A|)^{1-\alpha}\}.
  \]
\end{lemma}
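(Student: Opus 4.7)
The plan is to apply the variational characterization \eqref{eq:variational} of capacity: to lower-bound $\capa(A)$ it suffices to exhibit a single probability measure $\psi$ supported on $A$ and then upper-bound $\|\psi*G\|_\infty$. The natural choice is the uniform measure $\psi \equiv 1/|A|$ on $A$, so that
\[
(\psi*G)(x) = \frac{1}{|A|}\sum_{a\in A} G(x-a)\qquad \text{for every } x \in \Z.
\]

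To bound this sum I would sort the distances $|x-a|$, $a\in A$, in nondecreasing order as $d_1 \le d_2 \le \cdots \le d_{|A|}$. Since $A \subset d\Z$, any interval of length $2kd$ centered at $x$ contains at most $2k+1$ points of $d\Z$, whence $d_i \ge c\,id$ for $i \ge 2$. Combining $G(0) = O(1)$ (transience) and the upper bound $G(y) \le C|y|^{\alpha-1}$ for $|y|\ge 1$ from Lemma~\ref{lem:G} with the elementary estimate $\sum_{i=1}^{n} i^{\alpha-1} \le C n^{\alpha}$ (valid since $\alpha \in (0,1)$) gives, uniformly in $x$,
\[
\sum_{a\in A} G(x-a) \le C + C d^{\alpha-1}|A|^{\alpha}.
\]
Dividing by $|A|$ and inverting,
\[
\capa(A) \ge \frac{1}{\|\psi*G\|_\infty} \ge c\min\bigl(|A|,(d|A|)^{1-\alpha}\bigr),
\]
which is the second inequality of the lemma. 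The first inequality is the special case $d=1$, since $|A|^{1-\alpha} \le |A|$ for $|A|\ge 1$.

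The argument is essentially a single computation, so there is no serious obstacle. The only points requiring a touch of care are the sparsity estimate $d_i \gtrsim id$ (where one should treat $i=1$ separately, as $d_1$ may vanish and contributes only the harmless constant $G(0)$ to the sum) and the verification that the bound $G(y) \le C|y|^{\alpha-1}$ supplied by Lemma~\ref{lem:G} indeed holds for all $|y|\ge 1$; neither is substantive.
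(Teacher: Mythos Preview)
Your proof is correct and follows essentially the same route as the paper: both take the uniform measure in the variational formula \eqref{eq:variational}, invoke $G(y)\le C|y|^{\alpha-1}$ from Lemma~\ref{lem:G}, and bound $\sum_{a\in A}G(x-a)$ by observing that the distances $|x-a|$ are (at least) spread out like $0,d,2d,\dotsc$; you are simply a bit more explicit about the ordering of distances and the separate treatment of the $G(0)$ term than the paper is.
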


\begin{proof}
  We use the variational definition of the capacity.  Examine the uniform
  probability measure $\psi_{u}$. We get
  \[
  \capa(A)\geq \| \psi_{u}*G \|_{\infty}^{-1}.
  \]
  By $G(x)\approx |x|^{\alpha-1}$ (Lemma \ref{lem:G}) we get
  \begin{align*}
  (\psi_{u}*G)(x) &=
  \sum_{y}G(x-y)\psi(y)\leq\sum_{y}(C|x-y|^{\alpha-1})\cdot\frac{1}{|A|}\\
  &\le\frac{1}{|A|}\sum_{i=1}^{|A|}(Ci^{\alpha-1})
  \le C|A|^{\alpha-1}
  \end{align*}
  and the first claim is proved.  If $A$ is contained in an arithmetic
  progression, the same argument gives
  \[
  (\psi_{u}*G)(x) \leq C|A|^{-1} + C(d |A|)^{\alpha-1},
  \]
  where the first term comes from the contribution of $y=x$.
\end{proof}

\begin{lemma}\label{lem:capconv}
  For an $\alpha$-walk $R$ the capacity of any finite set $A$ satisfies
  \[
  \capa(A)\leq C(\diam A)^{1-\alpha}.
  \]
\end{lemma}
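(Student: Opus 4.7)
The plan is to apply the variational formula \eqref{eq:variational}: to bound $\capa(A)$ from above it suffices to produce a uniform lower bound of order $(\diam A)^{\alpha-1}$ on $\|\psi\ast G\|_{\infty}$, valid for every probability measure $\psi$ supported on $A$. The variational formula is convenient precisely because it reduces an upper bound on capacity to a \emph{pointwise} lower bound on $\psi\ast G$ at a single point of our choosing.

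First I would set $D=\diam A$ and pick an arbitrary point $x^{*}\in A$, then evaluate the convolution there. Every $y\in A$ satisfies $|x^{*}-y|\le D$. Since $\alpha<1$, the function $t\mapsto t^{\alpha-1}$ is decreasing on $(0,\infty)$, so Lemma~\ref{lem:G} gives, for $y\neq x^{*}$,
\[
G(x^{*}-y)\;\ge\;c\,|x^{*}-y|^{\alpha-1}\;\ge\;c\,D^{\alpha-1}.
\]
The diagonal term $G(0)\psi(x^{*})$ is nonnegative, and in fact $G(0)$ is a positive finite constant for any transient walk, which majorizes $D^{\alpha-1}$ whenever $D\ge 1$. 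Summing over $y\in A$ yields
\[
(\psi\ast G)(x^{*})\;=\;\sum_{y\in A}G(x^{*}-y)\psi(y)\;\ge\;c D^{\alpha-1}\sum_{y\in A}\psi(y)\;=\;c D^{\alpha-1}.
\]
Hence $\|\psi\ast G\|_{\infty}\ge c D^{\alpha-1}$, and taking the supremum over $\psi\in\mathcal{P}(A)$ in \eqref{eq:variational} gives $\capa(A)\le C D^{1-\alpha}$.

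The degenerate case $|A|=1$ (so $D=0$) can be dealt with separately by the trivial bound $\capa(A)\le|A|=1$ and absorbed into the constant; for all $D\ge 1$ the argument above is uniform. There is no real obstacle: the only inputs are the lower bound $G(x)\ge c|x|^{\alpha-1}$ from Lemma~\ref{lem:G} and the monotonicity of $t^{\alpha-1}$ for $\alpha<1$. What makes the proof a single line is exactly the fact that the variational characterization lets us test $\psi\ast G$ at one carefully chosen point of $A$, rather than having to control the function globally.
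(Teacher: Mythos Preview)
Your proof is correct and takes a genuinely different route from the paper's. The paper first uses monotonicity of capacity to reduce to an interval, $\capa(A)\le\capa([0,\diam A])$, and then bounds $\capa([0,n])$ via the escape-probability definition, estimating each escape probability crudely by the chance of jumping out of the interval on the first step: $\capa([0,n])\le 2\sum_{i\le n/2}\P(|R_1|>i)\le C n^{1-\alpha}$. You instead work directly with the variational formula and the Green's function lower bound from Lemma~\ref{lem:G}, observing that any point of $A$ is within distance $D$ of all the mass of $\psi$, so $(\psi\ast G)(x^*)\ge cD^{\alpha-1}$ uniformly in $\psi$. Your argument is shorter and avoids the reduction to an interval; the paper's argument has the minor advantage that it only uses the tail bound $\P(|R_1|>t)\le Ct^{-\alpha}$ rather than the two-sided Green's function estimate, but since the lemma is stated for $\alpha$-walks this makes no difference here.
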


\begin{proof}
  It follows directly from the variational definition
  (\ref{eq:variational}) that $\capa(A)$ is
  increasing in $A$ (see \cite[P25.11 (b)]{S76} for a different but equally
  simple argument). Filling in the holes in $A$ and translating to $0$ we get $\capa(A)\leq\capa([0,\diam A])$. To estimate the
  capacity of an interval we use the definition of capacity as a sum of escape
  probabilities and write
  \[
  \capa([0,n]) \leq 2\sum_{i=1}^{n/2} \P_0(|R_1|>i)
  \]
  i.e.\ we bound the escape probability simply by the probability to exit
  the set in the first step. Since $R$ is an $\alpha$-walk, this implies
  \[
  \capa([0,n]) \leq C\sum i^{-\alpha}\leq Cn^{1-\alpha}.\qedhere
  \]
\end{proof}

To demonstrate that the combination of these two bounds is sharp, let us
prove the result promised in the introduction concerning the capacity of
the Cantor set.  While this is a bit of a digression, some of our results
indicate that the DLA aggregate is self-similar, and is built up of several
copies of smaller aggregates, well separated in $\Z$.  Thus the following
serves as further indication that the capacity of our aggregate $A_n$ is indeed
roughly $\min\{n,(\diam A_n)^{1-\alpha}\}$, and consequently the diameter grows as $n^{\min\{2,1/\alpha\}+o(1)}$ also
for $\alpha\in[1/3,1]$.

\begin{thm}
\label{thm:capa_Cantor}Let $A_{n}$ be the discrete Cantor set \[
A_{n}:=\{ i=0,\dotsc,3^{n}-1:\forall j=0,\dotsc,n-1\left\lfloor i/3^{j}\right\rfloor \not\equiv1\mod3\}.\]
Let $R$ be an $\alpha$-walk. Then \[
\capa A_{n}\geq c\min\{2^{n},3^{n(1-\alpha)}\}/n.\]
Further, if $\alpha\neq-\log_{3}2+1$ then the estimate may be slightly
improved to\[
\capa A_{n}\geq c(\alpha)\min\{2^{n},3^{n(1-\alpha)}\}.\]
\end{thm}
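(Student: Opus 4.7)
The plan is to apply the variational characterization \eqref{eq:variational} with the uniform probability measure $\psi_u$ that assigns mass $2^{-n}$ to each of the $2^n$ points of $A_n$, and to bound $\|\psi_u * G\|_\infty$ from above. Setting $r := 2 \cdot 3^{\alpha - 1}$, the three regimes $r < 1$, $r > 1$, $r = 1$ correspond exactly to $\alpha < 1 - \log_3 2$, $\alpha > 1 - \log_3 2$, and the critical exponent $\alpha = 1 - \log_3 2$; the two non-critical cases will produce the stronger bound, and the critical case the universal bound with the extra $1/n$.

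First I would handle $x \in A_n$, exploiting the ternary self-similarity of the Cantor construction. At each level $k \in \{0, \dotsc, n-1\}$ along the length-$n$ branch leading to $x$, peel off the \emph{sibling subtree} at that depth: a scaled copy of $A_{n-1-k}$ containing $2^{n-1-k}$ Cantor points, all lying at distance at least $c \cdot 3^{n-1-k}$ from $x$. Using $G(z) \le C|z|^{\alpha - 1}$ from Lemma~\ref{lem:G}, the contribution of this level to $\sum_{y \in A_n} G(x-y)$ is at most $C \cdot 2^{n-1-k} \cdot 3^{(n-1-k)(\alpha-1)} = C\,r^{n-1-k}$, while the $y = x$ term contributes a harmless $G(0) = O(1)$. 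Summing over $k$ gives $\sum_{j=0}^{n-1} r^j$, which is $O(1)$ when $r < 1$, $O(r^n)$ when $r > 1$, and $O(n)$ when $r = 1$. Dividing by $2^n$ and inverting via \eqref{eq:variational} yields $\capa A_n \ge c \cdot 2^n$, $\capa A_n \ge c \cdot 2^n / r^n = c \cdot 3^{n(1-\alpha)}$, and $\capa A_n \ge c \cdot 2^n / n$, respectively, matching the theorem since $2^n = 3^{n(1-\alpha)}$ precisely at the critical exponent.

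Because the norm in \eqref{eq:variational} is taken over all of $\Z$ and not just $A_n$, I would also need to verify the same upper bound for $(\psi_u * G)(x)$ when $x \in \Z \setminus A_n$. For this I would group points of $A_n$ into ternary shells $\{y : 3^{k-1} \le |x - y| < 3^k\}$, using the standard fractal-dimension observation that any interval of length $3^k$ meets $A_n$ in at most $C \min(2^k, 2^n)$ points (being covered by $O(1)$ depth-$(n-k)$ subtrees, each contributing $2^k$ leaves). The resulting sum splits at $k = n$: the range $k \le n$ reproduces the same geometric series in $r$ as above, and the tail $k > n$ is a separate geometric series with ratio $3^{\alpha - 1} < 1$, contributing at most $C r^n$. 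Both pieces give the same bounds as the on-set calculation.

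The main point of care, and the only genuine obstacle, is the critical exponent $r = 1$, where the geometric series collapses to an arithmetic sum of length $n$ and forces the extra $1/n$ in the universal bound. Away from this one value of $\alpha$ a single geometric endpoint dominates, and the sharper constant $c(\alpha)$ (essentially proportional to $|1 - r|^{-1}$) drops out of the very same calculation, yielding the improved bound stated in the theorem.
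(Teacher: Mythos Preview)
Your proof is correct, and the core estimate---bounding $\sum_{y\in A_n}|x-y|^{\alpha-1}$ via the ternary shell decomposition and summing the geometric series in $r=2\cdot 3^{\alpha-1}$---is exactly what the paper does. The difference is in the setup. You plug the uniform measure $\psi_u$ directly into the variational inequality \eqref{eq:variational}, which obliges you to bound $(\psi_u*G)(x)$ for \emph{all} $x\in\Z$, hence your second paragraph treating $x\notin A_n$. The paper instead starts from the equilibrium charge $\psi$, for which $(\psi*G)(x)=1/\capa A_n$ holds exactly on $A_n$, and then averages this identity over $x\in A_n$:
\[
\frac{1}{\capa A_n}=2^{-n}\sum_{x\in A_n}(\psi*G)(x)
=\sum_{y\in A_n}\psi(y)\,\Big(2^{-n}\sum_{x\in A_n}G(x-y)\Big).
\]
After swapping the order of summation the inner bracket is precisely your $(\psi_u*G)(y)$, but now only for $y\in A_n$, so the off-set case never arises and $\sum\psi(y)=1$ finishes the job. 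Your route is a bit more elementary (it needs only the variational inequality, not the constancy of $\psi*G$ on $A_n$); the paper's averaging trick buys the restriction to $x\in A_n$ and lets one skip your off-set paragraph entirely.
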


Note that by lemma \ref{lem:capconv} $\capa A_{n}\leq3^{n(1-\alpha)}$
and since $|A_{n}|=2^{n}$ we have $\capa A_{n}\leq2^{n}$. Hence
(for $\alpha$-walks) the theorem is precise up to at most the logarithmic factor $n$.

\begin{proof}
Let $\psi$ be the equilibrium charge over $A_{n}$. Then \[
\frac{1}{\capa A_{n}}=(\psi*G)(x)\quad\forall x\in A_{n}\]
and therefore
\begin{align}
\frac{1}{\capa A_{n}} & =2^{-n}\sum_{x\in A_{n}}(\psi*G)(x)=2^{-n}\sum_{x\in A_{n}}\sum_{y\in A_{n}}\psi(y)G(x-y)=\nonumber \\
 & =2^{-n}\sum_{y\in A_{n}}\psi(y)\sum_{x\in A_{n}}G(x-y)\nonumber\\
 &\leq C2^{-n}\sum_{y\in A_{n}}\psi(y)\sum_{x\in
 A_{n}}|x-y|^{\alpha-1}\label{eq:capaavrg}
\end{align}
where in the last inequality we used Lemma \ref{lem:G}.
Now for every $y\in A_{n}$ it is easy to calculate\begin{align}
\sum_{x\in A_{n}}|x-y|^{\alpha-1} & =\sum_{k=0}^{n-1}\sum_{x:3^{k}\leq|x-y|\leq3^{k+1}}|x-y|^{\alpha-1}\leq\nonumber \\
 & \leq\sum_{k=0}^{n-1}3^{k(\alpha-1)}\#\{ x:|x-y|\leq3^{k+1}\}\leq C\sum_{k=0}^{n-1}3^{k(\alpha-1)}2^{k}\leq\nonumber \\
 & \leq Cn\max(2^{n}3^{n(\alpha-1)},1).\label{eq:alphacrit}\end{align}
Further, if $\alpha\neq-\log_{3}2+1$ then the last inequality may
be strengthened and we get\begin{equation}
\sum_{x\in A_{n}}|x-y|^{\alpha-1}\leq C(\alpha)\max(2^{n}3^{n(\alpha-1)},1)\quad\forall y\in A_{n}\label{eq:alphanoncrit}\end{equation}
Plugging (\ref{eq:alphacrit}) and (\ref{eq:alphanoncrit}) into (\ref{eq:capaavrg})
gives the two parts of the theorem, respectively (remember that $\sum\psi(y)=1$).
\end{proof}

Monotonicity of the capacity implies that adding a point to a set will
increase its capacity. The next lemma gives a nice formula for the exact
increment in capacity. Recall that for a set $A$ and point $x$, $E_A(x)$ is
the escape probability starting at $x$.  We also define $E'_A(x) =
E_{A\cup\{x\}}(x)$, i.e.\ the probability that $x$ is also avoided by the
random walk.

\begin{lemma} \label{lem:escape2}
 Let $A$ be a set with finite capacity with respect to a transient symmetric random
 walk, let $x$ be a point outside $A$, and let $A'=A\cup\{x\}$.
 Then
 \[
   \capa(A')-\capa(A) = E_A(x)E'_A(x)
 \]
 and in particular, $\capa(A')-\capa(A) \in [E'_A(x)^2, E_A(x)^2]$.
\end{lemma}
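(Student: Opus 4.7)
The plan is to decompose the capacity change into contributions from $x$ itself and from the old points of $A$, then use strong Markov plus path reversal (exploiting symmetry) to collapse the old-point contributions into a clean expression.

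First I would write
\[
\capa(A') - \capa(A) = E'_A(x) + \sum_{a\in A}\bigl(E_{A'}(a) - E_A(a)\bigr).
\]
For each $a\in A$, the event $\{T_A=\infty\}$ splits according to whether $T_{A'}$ is finite. Since $A\subset A'$, if $T_{A'}<\infty$ and $T_A=\infty$ the walk must first meet $A'$ at $x$, and then must never return to $A$. Applying the strong Markov property at $T_{A'}$ yields
\[
E_A(a) - E_{A'}(a) = \P_a\bigl(R_{T_{A'}}=x,\ T_{A'}<\infty\bigr)\cdot E_A(x).
\]

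The next step is to sum this over $a\in A$ and use time reversal. A path from $a$ to $x$ whose interior avoids $A'$ has, by symmetry of the step distribution, the same probability as its reverse, which is a path from $x$ to $a$ whose interior avoids $A'$. Summing over $a\in A$ and path length therefore gives
\[
\sum_{a\in A}\P_a\bigl(R_{T_{A'}}=x,\ T_{A'}<\infty\bigr)
= \P_x\bigl(R_{T_{A'}}\in A,\ T_{A'}<\infty\bigr).
\]
Decomposing $\P_x(T_{A'}<\infty) = 1 - E'_A(x)$ as the sum of $\P_x(R_{T_{A'}}=x,T_{A'}<\infty)$ and $\P_x(R_{T_{A'}}\in A, T_{A'}<\infty)$, the remaining task is to evaluate $q := \P_x(R_{T_{A'}}=x,T_{A'}<\infty)$. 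Applying strong Markov once more at $T_{A'}$ on the event $\{T_A=\infty\}$ started from $x$ gives the renewal-style identity
\[
E_A(x) = E'_A(x) + q\,E_A(x),
\]
so $q = 1 - E'_A(x)/E_A(x)$, and hence
\[
\sum_{a\in A}\P_a\bigl(R_{T_{A'}}=x,\ T_{A'}<\infty\bigr)
= \frac{E'_A(x)\bigl(1-E_A(x)\bigr)}{E_A(x)}.
\]

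Substituting back yields
\[
\capa(A') - \capa(A) = E'_A(x) - E_A(x)\cdot\frac{E'_A(x)\bigl(1-E_A(x)\bigr)}{E_A(x)}
= E_A(x)\,E'_A(x),
\]
which is the desired formula. The bracketing statement is then immediate from the monotonicity $E_{A'}(x) \le E_A(x)$, i.e.\ $E'_A(x)\le E_A(x)$. The main subtlety is in the reversal step: one must check that paths in $\{T_{A'}=n, R_{T_{A'}}=x\}$ started at $a\in A$ correspond bijectively, under time-reversal, to paths in $\{T_{A'}=n, R_{T_{A'}}=a\}$ started at $x$, using that the interior vertices (indices $1,\dots,n-1$) lie in the complement of $A'$ in both parametrizations. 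Everything else is bookkeeping with the strong Markov property.
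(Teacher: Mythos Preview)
Your proof is correct and follows essentially the same route as the paper's: decompose the capacity difference as $E'_A(x)-\sum_{a\in A}(E_A(a)-E_{A'}(a))$, apply the strong Markov property, and use path reversal via symmetry of the walk. The one difference is that the paper breaks paths from $a$ at the \emph{last} visit to $x$ rather than the first visit to $A'$; this pulls out the factor $E'_A(x)$ immediately and after reversal leaves $\sum_{a\in A}\P_x(R\text{ first hits }A\text{ at }a)=1-E_A(x)$, so the paper avoids your extra renewal computation for $q$, but the two arguments are otherwise identical.
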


\begin{proof}
  From the representation of the capacity as a sum of escape probabilities
  we have $\capa(A')-\capa(A) = E'_A(x) - \sum_{a\in A}
  (E_A(a)-E_{A'}(a))$.  Since the event of escaping from $A'$ is contained
  in the event of escaping from $A$, for $a\in A$ we have,
  \[
  E_A(a)-E_{A'}(a) = \P_a(T_x<\infty, T_A = \infty).
  \]
  Random walk paths that escape from $A$ and hit $x$, a.s.\ visit $x$ only
  finitely many times. Breaking each path at the last visit to $x$ we have
  \[
  E_A(a)-E_{A'}(a)
  = \sum_{\substack{\gamma:a\to x\\ \text{outside A}}}
         \P(\gamma) \P_x(T_{A'} = \infty)
  = E'_A(x) \sum_{\substack{\gamma:a\to x\\ \text{outside A}}} \P(\gamma)
  \]
  where $\P(\gamma)$ denotes the probability that the random walk follows the
  path $\gamma$ until its end. 
  By reversing each path $\gamma$:
  \[
  E_A(a)-E_{A'}(a) = E'_A(x) \P_x(R\text{ hits $A$ at $a$}),
  \]
  since the sum is just the probability that the walk starting at $x$
  hits $A$ at $a$. Summing over $a\in A$:
  \[
  \capa(A')-\capa(A) = E'(x) - E'_A(x) \P_x(T_A<\infty)
  = E'_A(x) E_A(x). \qedhere
  \]
\end{proof}

\subsection{Capacity and DLA}

\begin{lemma}\label{lem:capabalance}
  Let $R$ be an $\alpha$-walk. Let $A_{n}$ be a corresponding DLA
  process. Then there exists some $\delta>0$ such that\[
  \P\Big( \capa(A_{n+1})-\capa(A_{n}) > \delta \,|\, \cF_n \Big)
  \geq \frac{cn}{\capa(A_{n})^{1/(1-\alpha)}}.\] Here $c$ and $\delta$
  may depend on the walk, but not on $n$.
\end{lemma}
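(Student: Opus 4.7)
The strategy is to pick a ``far region'' $U\subseteq\Z\setminus A_n$ on which each glued particle yields capacity increment at least a fixed constant, and to bound $\mu(U)$ from below using Lemma~\ref{L:mu_trans}. Set $M:=\capa(A_n)^{1/(1-\alpha)}$ and $U:=\{x\in\Z\setminus A_n:\rho(x,A_n)\ge C_0M\}$ for a large constant $C_0=C_0(R)$. Applying Proposition~\ref{prop:HGPI} and summing over $A_n$,
\[
\P_x(T_{A_n}<\infty)=\sum_{a\in A_n}G(x-a)E^*_{A_n}(a)\le G\bigl(\rho(x,A_n)\bigr)\capa A_n,
\]
which by $G(z)\approx|z|^{\alpha-1}$ (Lemma~\ref{lem:G}) is at most $1/(2G(0))$ on $U$ once $C_0$ is large. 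Using $\P(T_x=\infty)=1/G(0)$ together with the inclusion-exclusion $\P(A\cap B)\ge\P(A)+\P(B)-1$ yields
\[
E'_{A_n}(x)=\P_x(T_x=\infty,\,T_{A_n}=\infty)\ge 1/G(0)+E_{A_n}(x)-1\ge 1/(2G(0))
\]
on $U$, and $E_{A_n}(x)\ge 1-1/(2G(0))\ge 1/2$ as well. By Lemma~\ref{lem:escape2} the capacity increment on $U$ is at least $\delta:=1/(4G(0))$.

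It remains to show $\mu(U)\ge cn/M$. Using $\mu(x)=E_{A_n}(x)\sum_{a\in A_n}p_{x,a}/\capa A_n$ from Lemma~\ref{L:mu_trans}, the lower bound $E_{A_n}(x)\ge 1/2$ on $U$, and the symmetry $p_{x,a}=p_{a,x}$, the problem reduces to showing
\[
\sum_{a\in A_n}\P_a(R_1\in U)\ge cnM^{-\alpha},
\]
since $M^{-\alpha}/\capa A_n=1/M$ by construction. For each $a$, any first step from $a$ of magnitude at least $C_0M+\diam A_n$ lands in $U$ (if $|R_1-a|\ge C_0M+\diam A_n$ then $|R_1-a'|\ge C_0M$ for every $a'\in A_n$), contributing $\ge c(C_0M+\diam A_n)^{-\alpha}$ by the $\alpha$-walk tail. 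Lemma~\ref{lem:capconv} gives $\diam A_n\ge c'M$, so whenever $\diam A_n$ is of order $M$ this external-tail estimate alone yields what is needed.

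The main obstacle is the regime $\diam A_n\gg M$, where the external tail is too weak. Here one must also use ``interior'' landing positions $x\in U$ inside large gaps of $A_n$: the $C_0M$-neighbourhood of $A_n$ contains at most $(2C_0M+1)n$ points while $\diam A_n$ may be much larger, so pigeonhole yields many sites at distance $\ge C_0M$ from every particle of $A_n$. Summing the contribution of these sites via the dual identity $\sum_{a\in A_n}\P_a(R_1\in U)=\sum_{x\in U}\sum_{a\in A_n}p_{x,a}$ together with the $\alpha$-walk step distribution recovers the missing factor $n$ and gives the bound $\mu(U)\ge cn/M$, completing the proof. Making this last combinatorial step quantitatively precise across all scales of $\diam A_n$ relative to $M$ is the most delicate part of the argument.
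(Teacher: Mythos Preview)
Your first three paragraphs are fine: the escape-probability estimates on the set $U=\{x:\rho(x,A_n)\ge C_0M\}$ are correct, and Lemma~\ref{lem:escape2} indeed gives a capacity increment at least $\delta$ there.

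The gap is in the final step, and it is a real one: the bound $\mu(U)\ge cn/M$ is \emph{false} for general $A_n$. Take $A_n$ to be an arithmetic progression of spacing $s\approx M/(4C_0)$. One checks (via Lemmas~\ref{lem:mincapa} and \ref{lem:capconv}) that $\capa(A_n)\approx n$, so $M\approx n^{1/(1-\alpha)}$, and that the $C_0M$-neighbourhoods of consecutive points overlap. Hence $U$ lies entirely outside the convex hull of $A_n$, there are \emph{no} interior gaps, and $\diam A_n\approx nM/(4C_0)\gg M$. For this configuration
\[
\sum_{a\in A_n}\P_a(R_1\in U)\approx M^{-\alpha}\sum_{j=0}^{n/2}(j+C)^{-\alpha}\approx n^{1-\alpha}M^{-\alpha},
\]
which gives only $\mu(U)\approx n^{1-\alpha}/M$, short of the required $n/M$ by a factor of $n^\alpha$. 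Your pigeonhole heuristic (``the $C_0M$-neighbourhood has at most $(2C_0M+1)n$ points'') fails precisely because this count can match or exceed $\diam A_n$.

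The paper avoids this by working not with the far region $U$ but with the full set $\{x:E_A(x)E_A'(x)>\delta\}$, which can include points \emph{close} to $A_n$ when $A_n$ is locally sparse. The key step is to show that its complement $S_\delta$ occupies at most half of any interval of length $\approx M$: if $S_\delta$ were dense in such an interval $I$, then an arithmetic subprogression $D\subset S_\delta\cap I$ would have $\capa(D)\ge c\min(|D|,(d|D|)^{1-\alpha})$ by Lemma~\ref{lem:mincapa}, while the defining property of $S_\delta$ forces $\capa(A\cup D)\le\kappa+\delta|D|$, and these two bounds are incompatible for suitable $\lambda,\delta$. Once this is known, every $a\in A_n$ sees at least $|I|/2$ good points at distance $\le\lambda M$, giving $\P_a(R_1\notin S_\delta)\ge cM^{-\alpha}$ uniformly in $a$ --- exactly what your argument lacks.
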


\begin{proof}
  Since $A_n$ is an arbitrary set of $n$ elements, let us denote it by $A$
  and its capacity by $\kappa$. For some $\delta$ to be determined, let
  $S_\delta = A\cup \{x : E_A(x) E'_A(x) \le \delta\}$ be the set of points
  whose addition to $A$ will increase the capacity by less than $\delta$.
  The first step is to show that $S_\delta$ is small, in the sense that at
  most half the points in any interval of length at least $ \lambda
  \kappa^{1/(1-\alpha)}$ are in $S_\delta$, where $\lambda$ is some
  constant to be determined.  The scale $\kappa^{1/(1-\alpha)}$ is chosen
  so that the capacity of the whole interval is comparable to $\kappa$.

  To see this, observe first that $E_A(x) E'_A(x)$ is decreasing in $A$.
  Let $D=\{k_1,\dotsc,\linebreak[4]k_{|D|}\}$ be an arbitrary subset of
  $S_\delta$. Then
  \begin{align}
  \capa(D)&\le\capa(A\cup
  D)\nonumber\\
  &=\capa(A)+\sum_{i=1}^{|D|}\capa(A\cup\{k_1,\dotsc,k_{i}\})-\capa(A\cup\{k_1,\dotsc,k_{i-1}\})\nonumber\\
  &=\kappa  + \sum_{i=1}^{|D|}
  E_{A\cup\{k_1,\dotsc,k_{i-1}\}}(k_i)E_{A\cup\{k_1,\dotsc,k_i\}}(k_i)
\le \kappa + |D|\delta\label{eq:capD}
\end{align}
  Fix $d=\kappa^{\alpha/(1-\alpha)}$. For an interval $I$ of length
  $\lambda \kappa^{1/(1-\alpha)}$, we apply this to $D = I \cap (d\Z) \cap
  S_\delta$.  Since $|D|\le |I|/d=\lambda\kappa$, 
we find from (\ref{eq:capD}) that
  $\capa(D)\leq(1+\delta\lambda)\kappa$.

  On the other hand, from Lemma~\ref{lem:mincapa} we have
  \[
  (1+\delta\lambda)\kappa \geq \capa(D) \geq c_1 \min\{|D|, (d
  |D|)^{1-\alpha}\}.
  \]
  Assume $|D| > |I|/2d = \lambda\kappa/2$, then we have either
  $1+\delta\lambda > c_1 \lambda/2$, or else $1+\delta\lambda > c_1
  (\lambda/2)^{1-\alpha}$. Neither of these hold for
  $\lambda$ large enough and $\delta$ small enough, giving a contradiction.
  Since there is nothing special about points that are $0 \pmod{d}$, we find
  that $S_\delta$ contains at most half of every congruency class in in
  $I$, and thus at most half of $I$. At this point we fix the values
  of $\lambda$ and $\delta$ to satisfy the requirement just stated, and from
  now on we treat them as constants and will hide them inside $c$-s.
  
  To complete the proof, note using the gluing formula \eqref{eq:repmu} that
  \begin{align*}
    \P\Big(\capa(A_{n+1})&-\capa(A_n) > \delta \,|\, \cF_n\Big)
    \geq \sum_{x\notin S_\delta} \P\Big(A_{n+1}=A_n\cup\{x\}\Big) \\
    & \stackrel{(\textrm{\ref{eq:repmu}})}{=} \sum_{x\notin S_\delta}
    \sum_{a\in A_n}\frac{p_{x,a}E_{A_n}(x)}{\capa A_n}
    \geq \sum_{a\in A_n} \sum_{x\notin S_\delta}
    \frac{c|x-a|^{-\alpha-1} \delta}{\kappa}
  \end{align*}
  (where in the last inequality we used that for $x\in S_\delta$ we also have $E_A(x) > \delta$.)
  For every $a\in A$ we estimate its contribution to the sum by
  applying the above with $I=[a,a+\lambda\kappa^{1/(1-\alpha)}]$. We get
\[
    \sum_{x\notin S_\delta}|x-a|^{-\alpha-1}
     \geq \sum_{x\in I \setminus S_\delta} c |I|^{-\alpha-1}
    \geq \frac{|I|}{2} \cdot c |I|^{-\alpha-1} = c
    \kappa^{-\alpha/(1-\alpha)}.
\]
Returning the term $c\delta/\kappa$ and summing over the $n$ elements of $A$ gives the claim.
\end{proof}

With this result in place, we are ready to prove our first non-trivial
bounds on the capacity of the DLA generated by an $\alpha$-walk.

\begin{lemma}\label{l:cap_lowerbound}
  Let $R$ be an $\alpha$-walk, and $(A_n)$ the associated DLA.  Then almost
  surely there exists some $c$ such that
  \[
  \capa(A_{n})>cn^{{\textstyle \frac{2-2\alpha}{2-\alpha}}}
  \]
  for all large enough $n$. Consequently, this bounds holds a.s.\ for all
  $n$ with a random $c$.
\end{lemma}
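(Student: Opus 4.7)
The plan is to use Lemma~\ref{lem:capabalance} as an engine forcing $\kappa_n := \capa(A_n)$ to grow. Write $\beta := (2-2\alpha)/(2-\alpha)$ and $\gamma := (2-\alpha)/(1-\alpha)$, and note the identities $\beta\gamma = 2$ and $\gamma - 1 = 1/(1-\alpha)$. The natural Lyapunov function is $\kappa_n^\gamma$: since any increment of $\kappa$ of size at least $\delta$ raises $\kappa^\gamma$ by at least $\gamma\delta\kappa^{\gamma-1} = \gamma\delta\kappa^{1/(1-\alpha)}$, combining this convexity estimate with Lemma~\ref{lem:capabalance} yields the clean drift bound
\[
\E\bigl[\kappa_{n+1}^\gamma - \kappa_n^\gamma \,\big|\, \F_n\bigr] \;\geq\; \gamma\delta\,\kappa_n^{1/(1-\alpha)} \cdot \frac{cn}{\kappa_n^{1/(1-\alpha)}} \;=\; c'n.
\]

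Next I would convert this drift into a tail estimate via a first-moment stopping-time argument. For $K>0$ let $\tau_K := \inf\{n : \kappa_n \geq K\}$ and $X_k := \1\{\kappa_{k+1}-\kappa_k>\delta\}$. Since $\kappa$ is non-decreasing with increments in $[0,1]$ (Lemma~\ref{lem:escape2}), one has $\delta\sum_{k<n\wedge\tau_K} X_k \leq K+1$. Taking expectations and lower-bounding $\P(X_k=1\mid \F_k)$ via Lemma~\ref{lem:capabalance} gives
\[
\frac{K+1}{\delta} \;\geq\; \sum_{k=0}^{n-1}\E\bigl[X_k\,\1\{\tau_K>k\}\bigr] \;\geq\; \frac{c}{K^{1/(1-\alpha)}} \cdot \frac{n^2}{2} \cdot \P(\tau_K>n),
\]
which rearranges to $\P(\tau_K>n) \leq C K^\gamma / n^2$. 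Setting $K = \epsilon n^\beta$ and using $\beta\gamma=2$ produces the pointwise tail estimate
\[
\P\bigl(\kappa_n \leq \epsilon n^\beta\bigr) \;\leq\; C\epsilon^\gamma. \qquad (\star)
\]

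Finally I would promote $(\star)$ to the almost-sure assertion. Evaluating at dyadic times $n_j := 2^j$ with $\epsilon_j \to 0$ decaying slowly enough that $\sum \epsilon_j^\gamma < \infty$ (for example $\epsilon_j = j^{-2/\gamma}$, which is summable since $\gamma \geq 2$), Borel--Cantelli yields $\kappa_{n_j} > \epsilon_j n_j^\beta$ for all large $j$ almost surely, and monotonicity of $\kappa$ propagates the bound to every $n \in [n_j, n_{j+1}]$ with a loss of at most $2^{-\beta}$. This alone delivers $\kappa_n \gtrsim n^\beta/(\log n)^{2/\gamma}$ a.s.

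The hard part is replacing the slowly-decaying $\epsilon_j$ by the \emph{fixed} positive constant the lemma demands. My approach would be to sharpen $(\star)$ via Freedman's martingale deviation inequality applied to $M_n := \sum_{k<n\wedge\tau_K}\bigl(X_k - \P(X_k=1\mid \F_k)\bigr)$: on $\{\tau_K>n\}$ the predictable compensator is at least $cn^2/K^{1/(1-\alpha)}$ while the sum is bounded by $(K+1)/\delta$, so $M_n$ is forced to take an atypically large negative value, which is exponentially unlikely once its predictable quadratic variation is controlled. This is the step where $\alpha$ genuinely enters quantitatively, and is the main technical obstacle, as the naive variance bound gives exponential decay in $n$ only for $\beta > 1/2$. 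Once $\kappa_n > cn^\beta$ is established for all $n$ beyond some random threshold, the ``consequently'' clause is immediate: each $\kappa_n \geq \kappa_1 > 0$ by transience, so shrinking $c$ further absorbs the finitely many small indices and yields the bound for all $n$.
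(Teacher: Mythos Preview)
Your stopping-time computation leading to $(\star)$ is correct and elegant, and the Borel--Cantelli step with $\epsilon_j = j^{-2/\gamma}$ legitimately yields $\kappa_n \gtrsim n^\beta/(\log n)^{2/\gamma}$. But you yourself flag the real issue: the lemma asks for a \emph{fixed} constant, and your proposed upgrade via Freedman is left as ``the main technical obstacle''. As you note, bounding the predictable quadratic variation crudely by $n$ gives useful decay only when $2\beta>1$, i.e.\ $\alpha<2/3$, so as written the proof does not cover $\alpha\in[2/3,1)$.

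The difficulty with Freedman is that Lemma~\ref{lem:capabalance} gives only a \emph{lower} bound on $\P(X_k=1\mid\F_k)$, so you cannot control the conditional variance from above. The paper sidesteps this by working with the complementary event and using stochastic domination rather than a second-moment martingale inequality. Concretely, set
\[
Y_m := \1\{\kappa_m < m^\beta\}\cdot \1\{\kappa_{m+1}-\kappa_m \le \delta\},
\]
so that Lemma~\ref{lem:capabalance} gives the \emph{uniform} bound $\P(Y_m=1\mid\F_m)\le 1-c_2 m^{-\alpha/(2-\alpha)}$ (the first indicator makes the bound unconditional). Hence the $Y_m$'s are stochastically dominated by independent Bernoullis, and Chernoff on each dyadic block $[2^k,2^{k+1})$ gives directly
\[
\P\Bigl(\#\{m\in[2^k,2^{k+1}):Y_m=0\}<\tfrac12 c_2\, 2^{k\beta}\Bigr)\le \exp\bigl(-c\,2^{k\beta}\bigr),
\]
summable for \emph{every} $\alpha\in(0,1)$. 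On the complementary event, either some $\kappa_m\ge m^\beta$ already, or else $\kappa$ makes at least $c\,2^{k\beta}$ increments of size $\ge\delta$ within the block; either way $\kappa_{2^{k+1}}\ge c\,2^{(k+1)\beta}$. This is exactly the paper's argument, and it is also the natural way to close your own: in your stopping-time framework, replacing $X_k$ by $X_k\vee\1\{\kappa_k\ge K\}$ makes the conditional lower bound unconditional, after which domination plus Chernoff (not Freedman) yields $\P(\tau_K>n)\le\exp(-c\,n^\beta)$ for $K=\epsilon n^\beta$ with $\epsilon$ small, valid for all $\alpha<1$.
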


\begin{proof}
  The key is that a long as the capacity is small, the probability that it
  grows is not too small.  Let $X_n$ be the events
  \[
  X_n := \Big\{\capa(A_n) < n^{\frac{2-2\alpha}{2-\alpha}}\Big\} \cap
  \Big\{\capa(A_{n+1})-\capa(A_n) \leq \delta\Big\}
  \]
  where $\delta$ is from Lemma~\ref{lem:capabalance}.  We can bound
  $\P(X_n|\cF_n)$ by $0$ if $\capa(A_n)$ is large, and so using
  Lemma~\ref{lem:capabalance} we have
  \begin{equation}\label{eq:PXncond}
    \P(X_n\,|\,\cF_n) < 1-\frac{cn}{n^{2/(2-\alpha)}} =
    1-c_2 n^{-\frac{\alpha}{2-\alpha}}.    
  \end{equation}

  Since $\capa(A_n)$ is increasing in $n$, it suffices to prove the claim
  for $n=2^k$.  Let $B_k$ be the (bad) event that too many $X_n$-s happened
  between $2^{k}$ and $2^{k+1}$ namely
  \[
  B_k := \left\{ \#\{ X_n:2^{k}\leq n<2^{k+1}\} \geq 2^{k}\left(1-{\textstyle
        \frac{1}{2}}c_2 2^{-(k+1)\frac{\alpha}{2-\alpha}}\right)\right\}.
  \]
  By (\ref{eq:PXncond}) the $X_n$ are stochastically dominated by
  i.i.d.\ variables and hence the probability of $B_k$ may be
  estimated by standard inequalities for such sums. We get
  \[
  \P(B_{k}) \leq \exp(-c2^{k-(k+1)\frac{\alpha}{2-\alpha}})
  \leq \exp(-c2^{k\frac{2-2\alpha}{2-\alpha}}).
  \]
  Therefore $\sum \P(B_k)$ converges and a.s.\ only a finite number of
  the $B_k$ occur.  The lemma will follow once we show that for some
  $c>0$ sufficiently small $\neg B_k$ implies
  $\capa(A_{2^{k+1}}) \geq c 2^{k(2-2\alpha)/(2-\alpha)}$.

  If for any $2^k\leq n<2^{k+1}$ we have $\capa(A_n) \geq
  n^{(2-2\alpha)/(2-\alpha)}$ then 
  \[
  \capa(A_{2^{k+1}}) > \capa(A_n)
  > (2^k)^{\frac{2-2\alpha}{2-\alpha}}
  = c (2^{k+1})^{\frac{2-2\alpha}{2-\alpha}}
  \]
  Otherwise, $\neg B_k$ implies that
  \[
  \capa(A_{n+1})-\capa(A_n) > \delta \quad \text{ for at least
    $c2^{k\frac{2-2\alpha}{2-\alpha}}$ $n$-s.}
  \]
  This of course implies
  \[
  \capa(A_{2^{k+1}})\geq c\delta2^{k\frac{2-2\alpha}{2-\alpha}}
  \]
  which was to be proved.
\end{proof}

\subsection{From capacity to diameter}\label{sec:capdiam}

We will now use the various estimates on the escape probabilities and
capacity of $A_n$ proved in the previous section to get bounds on the the
diameter of $A_n$.  This section is analogous to \S 3 in part I.


\begin{thm}\label{t:diam_ge}
For an $\alpha$-walk $R$ one has
\[
\diam(A_n)\ge \left(\frac{n^2}{\capa
  A_{n}}\right)^{1/\alpha}\textrm{ infinitely often}
\]
and
\[
\diam(A_n)\ge \left(\frac{cn^2}{(\log\log n)\capa
  A_{n}}\right)^{1/\alpha}\textrm{ for all $n$ sufficiently large}
\]
both with probability 1.
\end{thm}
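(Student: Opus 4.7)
The plan rests on a single one-step estimate derived from the gluing formula \eqref{eq:repmu}: for any $\cF_n$-measurable $m\ge 2\diam A_n$,
\[
\P\bigl(\diam A_{n+1}-\diam A_n\ge m\,\big|\,\cF_n\bigr)\ge \frac{cn\,m^{-\alpha}}{\capa A_n}.
\]
To verify this, I would restrict attention to sites $x>\max A_n+m$, at which the diameter automatically grows by at least $m$. The hypothesis $m\ge 2\diam A_n$ makes $|x-a|$ comparable to $|x-\max A_n|$ uniformly in $a\in A_n$, so $\sum_{a\in A_n}p_{x,a}\ge cn\,|x-\max A_n|^{-1-\alpha}$ since the walk is an $\alpha$-walk; meanwhile Lemma~\ref{lem:tran_hit} yields $E^*_{A_n}(x)\ge c$ for these $x$. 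Summing $\mu(x)$ over $x>\max A_n+m$ gives the claim (the symmetric tail below $\min A_n$ only helps).

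For part (i), let $G_n$ be the event that either $\diam A_n\ge\tfrac12(n^2/\capa A_n)^{1/\alpha}$ already, or the next aggregation step grows the diameter by at least $(n^2/\capa A_n)^{1/\alpha}$. In the non-trivial case $\diam A_n<\tfrac12(n^2/\capa A_n)^{1/\alpha}$ the choice $m=(n^2/\capa A_n)^{1/\alpha}$ automatically satisfies $m\ge 2\diam A_n$, so the basic estimate yields $\P(G_n|\cF_n)\ge c/n$. Since $\sum c/n=\infty$, the conditional Borel--Cantelli lemma (L\'evy's extension) gives that $G_n$ occurs for infinitely many $n$. Each occurrence produces $\diam A_{n+1}\ge\tfrac12(n^2/\capa A_n)^{1/\alpha}$, and a harmless index shift (using $\capa A_n\le\capa A_{n+1}$ and $((n+1)/n)^{2/\alpha}=O(1)$) converts this into $\diam A_N\ge (N^2/\capa A_N)^{1/\alpha}$ for infinitely many $N$.

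Part (ii) runs the same engine on dyadic windows $I_k=[2^k,2^{k+1})$. Set $m_n^*=(c_0 n^2/((\log\log n)\capa A_n))^{1/\alpha}$ with $c_0$ a small constant, and let $G_n^*$ be the analogous ``either/or'' event at this scale; the same computation yields $\P(G_n^*|\cF_n)\ge c_1\log\log n/n$. Telescoping conditional expectations over $n\in I_k$ produces
\[
\P\bigl(\text{no }G_n^*\text{ occurs in }I_k\bigr)\le\prod_{n\in I_k}\bigl(1-c_1\log\log n/n\bigr)\le\exp(-c_2\log k)=k^{-c_2},
\]
where $c_2$ is made larger than $1$ by choosing $c_0$ sufficiently small. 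Ordinary Borel--Cantelli then gives, for every $k$ beyond a random threshold, some $n^*\in I_k$ with $G_{n^*}^*$. Monotonicity of $\diam A_n$ propagates $\diam A_{n^*+1}\ge m_{n^*}^*/2$ to all $n\ge n^*$, and the ratio $m_n^*/m_{n^*}^*$ is bounded by an absolute constant for $n\in I_k\cup I_{k+1}$ because $(n/n^*)^2$, $\log\log n^*/\log\log n$, and $\capa A_{n^*}/\capa A_n$ are each $O(1)$ on such a range. Absorbing this constant back into $c_0$ yields $\diam A_n\ge m_n^*$ for all $n$ sufficiently large.

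The main obstacle is not the Borel--Cantelli machinery but ensuring that the one-step estimate applies regardless of how $\diam A_n$ compares to the current target: the ``either/or'' packaging of $G_n$ and $G_n^*$ is the crucial device that hides this dichotomy under a single clean probability bound. The $\log\log n$ factor in (ii) is forced by the identity $\sum_{n\in I_k}1/n=O(1)$, so an extra $\log k\asymp\log\log n$ inside the probability is the minimum needed to turn the window failure probability into a summable sequence in $k$.
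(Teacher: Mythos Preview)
Your argument is essentially the paper's own proof: the same one-step growth estimate via \eqref{eq:repmu} and Lemma~\ref{lem:tran_hit}, then Borel--Cantelli (conditional version for part~(i), dyadic blocks for part~(ii)); your explicit ``either/or'' packaging of $G_n$ is just the paper's case split ``if $\diam A>D$ the probability is $1$'' written out.  One small point: the index shift at the end of part~(i) does not actually absorb the factor $\tfrac12$, so to get the constant-free statement you should put the slack in the target $m$ itself (take $m=(4n^2/\capa A_n)^{1/\alpha}$, as the paper does with its $D_m$), not try to recover it afterwards.
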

\begin{proof} Fix some $m$, $D$ and set $A$ with $|A|=m$, and examine the
  growth probability $\P(\diam A_{m+1}>D\,|\,A_m=A)$.  If $\diam A>D$, then
  this probability is simply 1. Otherwise, suppose that $x$ was glued to
  some $a\in A$. If $|x-a|>2D$, then $\diam A_{m+1}>D$ (in fact this even
  implies that $\diam A_{m+1} \ge 2D$) but it also implies that
  $d(x,A)>D\ge \diam A$.  Thus we can use Lemma~\ref{lem:tran_hit} to
  estimate $E_A(x) > c$.  We get
  \begin{multline}\label{eq:mDcap}
    \P\Big(\diam A_{m+1} > D\,|\,A_m=A\Big)
    \ge \sum_{\substack{a\in A,\\ x:|x-a|>2D}}\mu(x,a) =\\
    \stackrel{(\ref*{eq:repmu})}{=}
    \sum_{a\in A}\sum_{x:|x-a|>2D}\frac{E_A(x)p_{x,a}}{\capa(A)}
    \ge \sum_{a\in A}\frac{c \P(|R_1|> 2D)}{\capa(A)}
    \geq \frac{cmD^{-\alpha}}{\capa(A)}.
  \end{multline}
  Applying this for $D_m=(4m^2/\capa A_m)^{1/\alpha}$, we get that the
  events $\diam A_{m+1} > D_m$ stochastically dominate a sequence of
  independent Bernoulli trials with probability $c/m$.  By the
  Borel-Cantelli Lemma, a.s.\ infinitely often $\diam A_{m+1} > D_m >
  \left(\frac{m^2}{\capa A_m}\right)^{1/\alpha}$.

  For the second part of the theorem, we apply (\ref{eq:mDcap}) with
  \[
  D_m = \left(\frac{c_2 m^2}{(\log\log m)\capa A_m}\right)^{1/\alpha}.
  \]
  For some $c_2$ sufficiently small, we get that the events $\diam
  A_{m+1} > D_m$ dominate a sequence of independent Bernoulli trials with
  probability $(4\log\log m)/m$.  Thus
  \begin{multline*}
  \P\Big(\exists m\in[\tfrac12n,n) \text{ s.t. } \diam A_{m+1}>D_m\Big)
  \ge 1-\prod_{m=n/2}^{n-1}\left(1-\frac{4\log\log m}{m}\right)\\
  \ge 1-\left(1-\frac{4\log\log n}{n}\right)^{n/2}
  \geq 1 - \frac{c}{\log^2 n}.
  \end{multline*}
  Existence of such $m$ implies that $\diam A_n \geq D_m \geq \frac14 D_n$.
  By Borel-Cantelli, this a.s.\ holds for $n=2^k$ for all large enough $k$.
  Moving from $n=2^k$ to general $n$ only loses a constant (again by the
  monotonicity of the capacity and diameter), and the second claim of the
  theorem is proved.
\end{proof}

\begin{coro}
  Under the assumptions of Theorem~\ref{t:diam_ge},
  \begin{align*}
    \diam(A_n)&\ge c\max(n^{1/\alpha},n^2) \text{ infinitely often}\\
    \diam(A_n)&\ge \max\left(\left(\frac{n}{\log\log
          n}\right)^{1/\alpha},\frac{n^2}{\log \log n}\right) \text{ for all
      $n$ sufficiently large.}
  \end{align*}
\end{coro}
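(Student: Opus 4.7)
The plan is to use the two general upper bounds on the capacity, namely the trivial bound $\capa A_n\le n$ (immediate, since $\capa A_n=\sum_{a\in A_n}E^*_{A_n}(a)\le |A_n|=n$) and the geometric bound $\capa A_n\le C(\diam A_n)^{1-\alpha}$ from Lemma~\ref{lem:capconv}, and substitute each of them into the two conclusions of Theorem~\ref{t:diam_ge}. Taking the maximum of the two resulting lower bounds yields the corollary.

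For the first conclusion, work on the event (of full probability) that $\diam(A_n)\ge (n^2/\capa A_n)^{1/\alpha}$ infinitely often. Plugging in $\capa A_n\le n$ gives $\diam A_n\ge n^{1/\alpha}$ along the same subsequence. Plugging in $\capa A_n\le C(\diam A_n)^{1-\alpha}$ gives
\[
(\diam A_n)^\alpha\ge \frac{n^2}{C(\diam A_n)^{1-\alpha}},
\]
and rearranging produces $\diam A_n\ge c\,n^2$ along that subsequence. Since the intersection of two a.s.\ events is a.s., both bounds hold on the same subsequence, so $\diam A_n\ge c\max(n^{1/\alpha},n^2)$ infinitely often.

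For the second conclusion, argue identically from the a.s.\ event that for all sufficiently large $n$, $\diam A_n\ge (cn^2/((\log\log n)\capa A_n))^{1/\alpha}$. The trivial bound $\capa A_n\le n$ gives $\diam A_n\ge (c\,n/\log\log n)^{1/\alpha}$ (the constant $c$ can be absorbed by increasing $n$ slightly if desired, though in any case an unspecified multiplicative constant is harmless in the statement). The bound $\capa A_n\le C(\diam A_n)^{1-\alpha}$ yields
\[
(\diam A_n)^\alpha\ge \frac{c\,n^2}{(\log\log n)\,C(\diam A_n)^{1-\alpha}},
\]
which rearranges to $\diam A_n\ge c'n^2/\log\log n$. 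Taking the max of these two lower bounds gives the stated inequality for all large $n$.

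There is no real obstacle here; the corollary is purely algebraic manipulation of Theorem~\ref{t:diam_ge} using the two universal capacity bounds. The only mildly subtle point is the implicit inequality produced by substituting the $(\diam A_n)^{1-\alpha}$ bound, but the exponents collapse cleanly because $\alpha+(1-\alpha)=1$, leaving the clean threshold $\diam A_n\gtrsim n^2$ (up to the $\log\log n$ factor in the second case).
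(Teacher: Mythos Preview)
Your proof is correct and follows the same approach as the paper: plug the trivial bound $\capa A_n\le n$ and the geometric bound $\capa A_n\le C(\diam A_n)^{1-\alpha}$ from Lemma~\ref{lem:capconv} into each conclusion of Theorem~\ref{t:diam_ge} and rearrange. The paper's own proof is the same argument stated in two sentences.
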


\begin{proof} The lower bound $n^{1/\alpha}$ follows from using
  Theorem~\ref{t:diam_ge} with the trivial bound
  $\capa A_n\le n$. The lower bound $n^2$ follows from applying the bound
  $\capa A_n\le C(\diam A_n)^{1-\alpha}$ (Lemma~\ref{lem:capconv}) and
  moving terms around.
\end{proof}

We now turn to our current best upper bound for the diameter for
$\alpha\in(1/3,1)$.  The following is a companion to
Theorem~\ref{t:diam_ge}, with reversed direction for the inequalities.

\begin{thm}\label{t:diam_le}
  For any symmetric random walk $R$ with $\P_0(|R_1| > x) \le C
  x^{-\alpha}$ (and in particular any $\alpha$-walk) one has a.s.
  \[
  \diam(A_n) = o\bigg(n(\log n)^{1+\eps}\max_{m<n} \frac{m}{\capa
    A_m}\bigg)^{1/\alpha}.
  \]
\end{thm}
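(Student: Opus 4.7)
The plan is to invert the argument of Theorem~\ref{t:diam_ge}: an upper bound on the tail of each single-step increment $\tau_m := \diam A_{m+1}-\diam A_m$ will yield an upper bound on $\diam A_n = \sum_{m<n}\tau_m$. The computation leading to~(\ref{eq:mDcap}), but using the trivial bound $E^*_A(x)\le 1$ in place of Lemma~\ref{lem:tran_hit}, gives the dual estimate
\[
\P(\tau_m > D \mid \cF_m) \le CK_m D^{-\alpha}, \qquad K_m := m/\capa A_m.
\]
Writing $D_n^* := (n(\log n)^{1+\eps}\max_{m<n}K_m)^{1/\alpha}$ for the target scale, our aim is to prove $\diam A_n = o(D_n^*)$ almost surely.

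The next step is to truncate the jumps at the $\cF_m$-measurable adaptive level
\[
\tilde T_m := \bigl(nK_m(\log n)^{1+\eps}\bigr)^{1/\alpha}/f(n), \qquad f(n) := (\log n)^{\eps/(2\alpha)},
\]
so that $\tilde T_m \le D_n^*/f(n)$ pathwise.  Direct integration of the tail bound then yields three pathwise estimates: first, the probability that \emph{any} $\tau_m$ exceeds its truncation is $\le \sum_{m<n} CK_m/\tilde T_m^\alpha = Cf(n)^\alpha/(\log n)^{1+\eps} = C/(\log n)^{1+\eps/2}$, summable along $n=2^k$; second, the total truncated conditional mean $\sum_{m<n}\E[\tau_m\mathbf{1}_{\tau_m\le \tilde T_m}\mid\cF_m]\le CD_n^*/((\log n)^{1+\eps}f(n)^{1-\alpha})$ is deterministically $o(D_n^*)$; and third, the centered martingale $M_n := \sum_{m<n}(\tau_m\mathbf{1}_{\tau_m\le\tilde T_m}-\E[\tau_m\mathbf{1}_{\tau_m\le\tilde T_m}\mid\cF_m])$ has predictable variation $\langle M \rangle_n \le C(D_n^*)^2/((\log n)^{1+\eps}f(n)^{2-\alpha})$.

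The main technical step is turning this pathwise variance bound, which involves the random $D_n^*$, into a concentration estimate for $M_n$.  I would stratify dyadically by the shells $\{D_n^*\in[2^{k-1},2^k)\}$; Lemma~\ref{l:cap_lowerbound} gives $\capa A_m \ge cm^{(2-2\alpha)/(2-\alpha)}$ eventually, forcing $D_n^*$ to be at most polynomial in $n$, so only $O(\log n)$ shells are relevant.  On each shell, the Burkholder--Davis--Gundy inequality applied to the martingale stopped at the first time $\langle M\rangle_m$ exceeds $V_k := C\cdot 2^{2k}/((\log n)^{1+\eps}f(n)^{2-\alpha})$ yields $\E[M_n^4\mathbf{1}_{\{D_n^*<2^k\}}]\le CV_k^2$, and then Chebyshev bounds the shell contribution to $\P(|M_n|>\delta D_n^*)$ by $CV_k^2/(\delta\cdot 2^{k-1})^4 = C/(\delta^4(\log n)^{2(1+\eps)}f(n)^{2(2-\alpha)})$.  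Summed over the $O(\log n)$ shells, and along $n=2^k$, this is summable for every $\delta>0$.  Combining the three estimates via Borel--Cantelli and a countable union over $\delta=1/i$ gives $\diam A_{2^k} = o(D^*_{2^k})$ a.s.; since $\capa A_{n+1}\ge\capa A_n$ yields the one-step bound $K^*_{n+1}/K^*_n \le 1+O(1/n)$, the ratios $D^*_{2^{k+1}}/D^*_{2^k}$ are bounded, so monotonicity of $\diam A_n$ and of $D_n^*$ extends the conclusion to all $n$.  The delicate balance is the choice of $f(n)$: large enough that the BDG--Chebyshev bound beats the $O(\log n)$ stratification loss, yet small enough that the big-jump probability in the first estimate remains summable.
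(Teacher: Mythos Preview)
Your strategy --- truncate each increment $\tau_m$ at an adaptive $\cF_m$-measurable level, control the probability of a large jump, the truncated conditional mean, and the centred martingale separately --- is sound and genuinely different from the paper's route.  The paper avoids martingales entirely: it decomposes \emph{dyadically in the jump size}, defines events $L_{m,k}=\{\tau_m>D_{m,k}\}$ with $D_{m,k}=(2^{-k}nm/\capa A_m)^{1/\alpha}$, shows the $L_{m,k}$ are dominated by Bernoulli$(C2^k/n)$ variables, and bounds the number of $L_{m,k}$ that occur at each level by a binomial tail estimate.  On the good event one simply sums $\sum_k(\text{count at level }k)\cdot\max_m D_{m,k-1}$, and the geometric series in $k$ converges.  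This bypasses the random-threshold martingale issue altogether and needs nothing beyond elementary counting.

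There is, however, a genuine gap in your martingale step.  You assert that BDG gives
\[
\E\bigl[M_n^4\,\mathbf 1_{\{D_n^*<2^k\}}\bigr]\le CV_k^2,
\]
but the Burkholder--Davis--Gundy inequality for discrete martingales controls the fourth moment by the \emph{quadratic} variation $[M]_n=\sum_m(\Delta M_m)^2$, not the predictable variation $\langle M\rangle_n$.  For $p>2$ the predictable-variation version requires a jump correction (Rosenthal's inequality):
\[
\E|M_n|^{2p}\le C_p\Bigl(\E\langle M\rangle_n^{\,p}+\E\!\sum_m|\Delta M_m|^{2p}\Bigr).
\]
Here the jump term dominates: on $\{D_n^*<2^k\}$ one has $|\Delta M_m|\le\tilde T_m\le 2^k/f(n)$ and $\E[\sum|\Delta M_m|^4]\approx 2^{4k}/((\log n)^{1+\eps}f(n)^{4-\alpha})$, which is larger than $V_k^2$ by a factor of $(\log n)^{1+\eps}f(n)^{-\alpha}$.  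With this correct bound, the Chebyshev estimate summed over the $O(\log n)$ shells and over $n=2^\ell$ is summable only when $\eps>c\alpha$, not for all $\eps>0$.

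The repair is easy.  Either replace BDG$+$Chebyshev by Freedman's inequality: the stopped martingale has increments bounded by $2^k/f(n)$ and predictable variation at most $V_k$, giving
\[
\P\bigl(|M^{(k)}_n|>\delta 2^{k-1}\bigr)\le 2\exp\bigl(-c\,\delta f(n)\bigr)=2\exp\bigl(-c\,\delta(\log n)^{\eps/2\alpha}\bigr),
\]
which is summable over shells and over $n=2^\ell$ for every $\eps>0$.  Or keep Rosenthal but take the moment exponent $p>\alpha/\eps$.  Either way your overall architecture survives; only the specific invocation of BDG needs to be replaced.
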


\begin{proof}
As in the proof of Theorem \ref{t:diam_ge}, let $m$, $D$ be integers
and $A$ a set of size $m$. Condition on $A_m=A$ and assume $A_{m+1}$
was constructed by gluing some $x$ at some $a\in A$. Then clearly
$\diam A_{m+1}-\diam A_m \le |x-a|$. Hence we can write
\begin{multline}\label{eq:Deldiam}
\P(\diam A_{m+1}-\diam A_m > D \,|\,A_m=A)
  \le \sum_{\substack{a\in A\\|x-a|>D}}\mu(x,a) \\
  = \sum_{\substack{a\in A\\|x-a|>D}}\frac{E_A(x)p_{x,a}}{\capa A}
  \le \sum_{a\in A}\frac{\P_0(|R_1|>D)}{\capa A}
  \le \frac{CmD^{-\alpha}}{\capa A}.
\end{multline}
Here we do not need Lemma~\ref{lem:tran_hit} or some analog of it, because
we simply estimate $E_A(x)\le 1$.

Denote therefore $D_{m,k}=(2^{-k} nm/\capa A_m)^{1/\alpha}$ and
examine the event
\[
L_{m,k} :=  \big\{ \diam A_{m+1}-\diam A_m>D_{m,k} \big\}.
\]
By (\ref{eq:Deldiam}) we have that $\P(L_{m,k}|A_m)\le C2^k/n$. In other
words, $L_{m,k}$ (for each fixed $k$ and $m=1,\dotsc,n$) are
stochastically dominated by i.i.d.\ Bernoulli trials with probabilities
$C2^k/ n$ so we expect at most $2^k$ such events. Standard estimates for
Bernoulli trials give
\begin{equation}\label{eq:14}
\P\big(\text{at least $\lambda 2^k$ $L_{m,k}$ occurred}\big)
\le C\binom{n}{\lceil\lambda 2^k\rceil}
\left(\frac{C2^k}{n}\right)^{\lceil\lambda 2^k\rceil} \le
\left(\frac{C}{\lambda}\right)^{\lceil\lambda 2^k\rceil}
\end{equation}
which holds for all $\lambda>0$ and all $k$.

Let $\Gg_n$ be the event that for all $k$, no more than $\lambda 2^k$
of the $L_{m,k}$ occur. Notice that when $\lambda 2^k<1$ this means no
$L_{m,k}$ occur, and since $L_{m,k}$ is increasing in $k$, no
$L_{m,k'}$ occur for any $k'<k$ either. By (\ref{eq:14}), if $\lambda$
is sufficiently large then 
 $\P(\Gg_n)>1-C/\lambda$, since it suffices to check the events for
$k$ with $\lambda 2^k \ge 1/2$.

For any given $m$, if $k$ is maximal so that $\diam A_{m+1}-\diam A_m \le
D_{m,k-1}$, then $L_{m,k}$ occurs, and so 
\[
\diam A_{m+1}-\diam A_m \le \sum_k \1_{L_{m,k}}D_{m,k-1}.
\]
Assuming $\Gg_n$ we can sum over $m$ to get
\begin{align}
  \diam A_n-\diam A_{n/2}
  &=\sum_{m=n/2}^{n-1}\diam A_{m+1}-\diam A_m \nonumber \\
  & \le \sum_{m=n/2}^n\sum_k \1_{L_{m,k}}D_{m,k-1} \nonumber \\
  &\le \sum_{k:2^k\lambda\ge 1}\lambda 2^k \max_{m<n}D_{m,k-1} \label{eq:diamAnAn2}\\
  &=\lambda \left(n\max_{m<n}\frac{m}{\capa A_m}\right)^{1/\alpha}
  \sum_{k:2^k\lambda\ge 1}  2^{k(1-1/\alpha)} \nonumber\\
  &\le C\left(\lambda n\max_{m<n}\frac{m}{\capa A_m}\right)^{1/\alpha}.\nonumber
\end{align}
Now fix $\lambda=c_1(\log n)^{1+\eps}$ for some $c_1$ sufficiently
small. Applying that $\P(\Gg_n)\le C/\lambda$
for $n=2^l$ we see that
$\Gg_{2^l}$ happens for all $l>l_0$ so we get
\[
\diam A_{2^l} \le \diam A_{2^{l_0}} + \sum_{i=l_0}^l C\left(c_12^i(\log
2^i)^{1+\eps}\max_{m<2^i}\frac{m}{\capa A_m}\right)^{1/\alpha}
\]
Passing from $n=2^l$ to general $n$ only costs a constant and we get
\[
\diam A_n \le \diam A_{2^{l_0}}+C\left(c_1n(\log
n)^{1+\eps}\max_{m<n}\frac{m}{\capa A_m}\right)^{1/\alpha}.
\]
Since $c_1$ is arbitrary, the claim follows.
\end{proof}

\begin{coro}
  Let $R$ be an $\alpha$-walk. Then
  \[
  \diam A_n \le n^{2/\alpha(2-\alpha)+o(1)}
  \]
\end{coro}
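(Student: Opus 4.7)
The plan is to chain together Theorem~\ref{t:diam_le} and Lemma~\ref{l:cap_lowerbound}: the theorem gives an upper bound on $\diam A_n$ in terms of $\max_{m<n} m/\capa A_m$, while the lemma provides an almost sure lower bound on $\capa A_m$ that pins down this maximum.

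First, I would invoke Lemma~\ref{l:cap_lowerbound} to obtain a random constant $c>0$ such that, almost surely,
\[
  \capa A_m \ge c\,m^{\frac{2-2\alpha}{2-\alpha}} \qquad \text{for all } m\ge 1.
\]
Dividing, this gives
\[
  \frac{m}{\capa A_m} \le c^{-1}\, m^{1-\frac{2-2\alpha}{2-\alpha}}
  = c^{-1}\, m^{\frac{\alpha}{2-\alpha}},
\]
and since the right-hand side is increasing in $m$, one has $\max_{m<n} m/\capa A_m \le C n^{\alpha/(2-\alpha)}$ for all $n$, with $C$ a (random) constant.

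Next, I would plug this into Theorem~\ref{t:diam_le}. Fixing any $\epsilon>0$, the theorem yields
\[
  \diam A_n \le C_\epsilon \bigg( n\,(\log n)^{1+\epsilon} \cdot n^{\frac{\alpha}{2-\alpha}} \bigg)^{1/\alpha}
\]
almost surely for all large $n$. A quick exponent calculation gives $1+\frac{\alpha}{2-\alpha}=\frac{2}{2-\alpha}$, and hence
\[
  \diam A_n \le C_\epsilon \, n^{\frac{2}{\alpha(2-\alpha)}} (\log n)^{(1+\epsilon)/\alpha}.
\]
The logarithmic factor is $n^{o(1)}$, so this is precisely $\diam A_n \le n^{2/\alpha(2-\alpha) + o(1)}$.

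There is no substantial obstacle here, since all the real work has already been done in Theorem~\ref{t:diam_le} (the high-probability control of sums of large jumps) and Lemma~\ref{l:cap_lowerbound} (the a.s.\ lower bound on the capacity). The only minor caveat is that the random constant from the capacity lemma enters through a power $1/\alpha$ in the final bound, but it is still a.s.\ finite and is subsumed by the $n^{o(1)}$ factor.
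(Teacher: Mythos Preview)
Your proposal is correct and follows essentially the same approach as the paper: apply Lemma~\ref{l:cap_lowerbound} to bound $\max_{m<n} m/\capa A_m$ by $C n^{\alpha/(2-\alpha)}$, then substitute into Theorem~\ref{t:diam_le} and simplify the exponent. The paper's proof is just a terser version of what you wrote.
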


\begin{proof}
By Lemma \ref{l:cap_lowerbound} we have
\[
\capa A_n > cn^{(2-2\alpha)/(2-\alpha)}
\]
for some random $c$, with probability 1. Using this in Theorem
\ref{t:diam_le} gives
\[
\diam A_n \le \left(n(\log
n)^{1+\eps}\max_{m<n}Cm^{\alpha/(2-\alpha)}\right)^{1/\alpha}=n^{2/\alpha(2-\alpha)+o(1)}.\qedhere
\]
\end{proof}
\section{Less than \texorpdfstring{$\frac{1}{3}$}{one third} moments}\label{sec:third}

In this section we will handle the case of $\alpha<\frac13$. As
already explained, we need to give an estimate for the capacity. This
is theorem \ref{T:linearcap} below --- the $\alpha<\frac13$ clause of
\thmref{all} is then an immediate corollary of it and of Theorem \ref{t:diam_le}.

\begin{thm}\label{T:linearcap} Let $R$ be a random
  walk satisfying $\P(R_1=x)\approx x^{-1-\alpha}$ for $\alpha\in (0,\frac13)$. Then the DLA
  generated by $R$ a.s.\ satisfies $\capa(A_n) = n^{1-o(1)}$.
\end{thm}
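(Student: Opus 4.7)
The plan is to prove a recursive inequality of the form
\[
\capa(A_n) \geq (1-o(1)) \cdot K \cdot \capa(A_{n/K}), \qquad K = \lfloor \log n \rfloor,
\]
(where $A_{n/K}$ is interpreted as a random DLA of size $n/K$ and comparisons are organized through medians of $\capa$) and to iterate it $\approx \log n/\log\log n$ times. Since the multiplicative losses of $1-o(1)$ per step accumulate to at most $n^{-o(1)}$, and $(\log n)^{\log n/\log\log n}=n$, iteration yields $\capa(A_n) \geq n^{1-o(1)}$. The intuition is the ``stochastic Cantor set'' picture from the introduction: for $\alpha<\tfrac13$, the lower bound $\diam A_n \gtrsim n^{1/\alpha}\geq n^{3}$ from Theorem~\ref{t:diam_ge} forces the aggregate to develop, at every scale, a bounded number of sub-clusters separated by distances overwhelmingly larger than their diameters, each distributed essentially like a smaller DLA.

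\textbf{Step 1: Identify sub-aggregates.} Following the DLA dynamics from $A_1=\{0\}$, declare a newly glued particle to be a \emph{seed} whenever it lands at distance much greater than $\diam A_m$ from the existing aggregate, with the threshold chosen via the estimate \eqref{eq:repmu} so that seeds are precisely the ``rare'' events driving the diameter growth. Subsequent particles are assigned to the cluster of the closest seed. Using the lower bound on $\diam A_n$ and the formula for $\mu$, one shows that with probability $1-o(1)$ at least $K$ distinct seeds appear among the first $n$ steps, producing clusters $B_1,\dotsc,B_K$ of comparable sizes $\sim n/K$ separated by distances $\gg \max_i \diam B_i$.

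\textbf{Step 2: Sub-aggregates look like independent DLAs.} This is the technical heart (the role of Lemma~\ref{L:TSnu}): whenever $B\subset A$ is a sub-cluster with $\rho(B,A\setminus B)\gg \diam B$, one shows that the restriction of the gluing measure $\mu(\cdot,\cdot;A)$ to targets in $B$ is, up to a $(1+o(1))$ factor and a capacity-dependent normalization, equal to $\mu(\cdot,\cdot;B)$. The quantitative input that makes this work is $n^2(\diam B)^{\alpha-1}\ll 1$, which holds precisely in the regime $\alpha<\tfrac13$ and prevents cross-interactions from accumulating over the $n$ steps. Because expectations of $1/\capa(A_m)$ are difficult to control, comparisons are phrased in terms of medians/quantiles as foreshadowed in the introduction. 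The conclusion is that $(B_1,\dotsc,B_K)$ is, within total variation $o(1)$, distributed as $K$ independent DLAs run for random times (dictated by a capacity-weighted clock) summing to $n$; hence with probability $1-o(1)$ each $B_i$ has capacity comparable to a median of $\capa(A_{n/K})$.

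\textbf{Step 3: Capacity of the union and iteration.} For pairwise well-separated sets, the variational definition \eqref{eq:variational} together with $G(x)\approx|x|^{\alpha-1}$ (Lemma~\ref{lem:G}) yields
\[
\capa\Big(\bigcup_{i=1}^K B_i\Big) \geq (1-o(1))\sum_{i=1}^K \capa(B_i),
\]
by taking as trial measure a weighted combination of the equilibrium charges of the $B_i$ and observing that the cross-terms in $\psi*G$ are negligible thanks to the scale separation. Combined with Step~2, this gives a recursion $m_n \geq (1-o(1)) K \cdot m_{n/K}$ for medians $m_n$ of $\capa(A_n)$. Iterating $\log n/\log\log n$ times and upgrading from medians to almost-sure statements by Borel--Cantelli along $n=2^k$ (using the monotonicity of capacity in $n$) yields $\capa(A_n)=n^{1-o(1)}$, as required. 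The main obstacle is Step~2: one must make quantitative, uniformly over all sub-cluster pairs and all $n$ steps, the claim that far-apart sub-aggregates evolve independently, while correctly tracking the global denominator $\capa(A_n)$ which couples all clusters together. This is what forces the use of medians rather than expectations and is why the argument only closes cleanly under the hypothesis $\alpha<\tfrac13$.
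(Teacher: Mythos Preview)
Your proposal is essentially the paper's own strategy: split the aggregate at the $\log n$ largest jumps, show the resulting pieces evolve as independent DLAs up to the interaction bound of Lemma~\ref{L:TSnu}, use near-additivity of capacity for well-separated sets, phrase the recursion via medians, and iterate along powers of $2$ via Borel--Cantelli.

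Two places where your sketch diverges from what the paper actually does deserve flagging. First, in Step~1 you assert that $K=\log n$ seeds appear with probability $1-o(1)$; this is not proved directly. The threshold $D$ for a ``large jump'' is chosen as $(CnM/\log n)^{1/\alpha}$ where $M$ is itself the $\tfrac56$-median of $\sum_{i\le n/\log n}\capa(A_{\tau(i)})^{-1}$, so $D$ depends on the very capacity bound you are proving. The paper handles this circularity by proving a dichotomy (Lemma~\ref{L:many_splits}): \emph{either} $\log n$ splits occur \emph{or} $\capa(A_\tau)\ge cn/M$ already, and then runs a simultaneous induction on $M_n$ and on $\Med(\capa A_{\tau(n)};\tfrac16)$ (Lemma~\ref{L:linearcap}). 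Your recursion $m_n\ge(1-o(1))K\,m_{n/K}$ omits this second branch of the $\min$.

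Second, your claim that the sub-clusters have ``comparable sizes $\sim n/K$'' is not how the argument runs and is not in general true. The paper does not control the relative sizes; instead it defines a split $S^q$ to be \emph{good} if it reaches size $n/\log n$ within time $2M$ and before splitting again (Lemma~\ref{L:24M} gives this probability $\ge\tfrac12-Cn^{-c}$), and separately shows $\tau(n)-\tau(n/2)\ge 2M$ so that all good branches born before $\tau(n/2)$ have time to grow. Only the good branches contribute to the capacity lower bound, and one shows at least $\tfrac14\log n$ of the first $\log n$ are good.
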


Throughout most of our analysis in this section we aim to derive properties
of the DLA at some time $n$, and to relate them to the DLA at time $\lfloor
n/\log n\rfloor $ (here and below we write $n/\log n$, implicitly taking
the integer part). Since $n$ is generally fixed, we frequently make it
implicit.

\subsection{Continuous time}

Let us introduce continuous time.  This is not strictly needed for our
analysis, but does simplify some of the proofs.  DLA in continuous time is
defined as follows. We start with $A_1=\{0\}$. Given the aggregate $A_t$,
each point $a\in A_t$ becomes active with rate $1$.  When a point $a$ is
activated, we start a random walk $(R_i)$ from $R_0=a$.  If $R$ avoids
$A_t$, then $R_1$ is added to $A_t$.  The whole step is instantaneous (the
speed of $R$ is infinite, if you like).  If $(R_i)$ hits $A_t$ then $A$
remains unchanged.  Let
\[
\tau(m) := \inf \{ t : |A_t| = m\}.
\]
It is easy to conclude from the gluing formula (\ref{eq:repmu}) that the
sequence $(A_{\tau(m)})_m$ of sets visited by the process $A_t$ is the DLA
process --- 
indeed, once one conditions on an addition of a particle at time $t$,
the probability that some $a\in A$ was the one activated is
exactly $E_{A_t}(a)/\capa(A_t)$ while the probability that the first
step was to $x$ (conditioned on the random walk from $a$ escaping to
infinity) is exactly $p_{x,a}E_{A_t}(x)/E_{A_t}(a)$, so one recovers
the gluing formula.

From now on, we shall simply refer to the
DLA in continuous time as DLA. We will also abbreviate $\tau=\tau(n)$ as
$n$ will be fixed for long parts of the analysis. Since new points are
added at rate $\capa(A_t)\in[1,|A_t|]$, it is clear that $|A_t|$ grows at
most exponentially fast and so the process is well defined for all $t$. In
light of Theorem~\ref{T:linearcap}, this is not too far from the truth.

\subsection{Split DLA}

The core of the argument is in introducing a process that allows us to
analyse the dependency between distant parts of the aggregate. We will
call this process split DLA (SDLA). This process is similar to DLA but
contains two components that grow independently. In fact SDLA is not a
single process, but a family of processes, though the dependence on the
parameters ($n$, $D$ and $q$) will be implicit.

We will construct a coupling of DLA and SDLA for all values of $q$, which
is then shown to have useful properties, while keeping the two aggregates
equal up to time $\tau(n)$ for most values of $q$ (SDLA is also constructed
in continuous time).  In fact, we believe that w.h.p.\ the two processes
are equal up to time $\tau(n)$ for all values of $q$, though proving that
is not needed for our argument.

SDLA is defined in terms of
two parameters. A carefully selected $D$ defined below, \eqref{eq:Ddef}
(depending implicitly on $n$), and an integer $q\in\N$. The SDLA consists
of {\bf two} sets $S_t$ and $\hS_t$. Initially we have $S_1 = \emptyset$
and $\hS_1=\{0\}$. The SDLA dynamics are very similar to DLA. Each of $S_t$
and $\hS_t$ evolves as an independent DLA in continuous time, with a small
exception: Each time a random walk is started at a point of $\hS$, there is
some probability that the first step $|R_1-a|$ of the random walk is large,
namely greater than $D$. On the $q$\textsuperscript{th} time that this happens, we add $R_1$
to $S_t$ (which up to that time has been empty), and we do not add it
to $\widehat{S}_t$. This is done whether or
not the random walk later hits $\hS_t$ or not. Similarly, ``the
$q$\textsuperscript{th} time'' counts initial steps greater than $D$
irrespectively of whether a point was added to $\widehat{S}$ or not.

While normally we keep the dependence on $q$ implicit, we will need it at
some points. We will use $S^q_t$ and $\hS^q_t$ to denote the two parts of
the SDLA when we wish to make this dependence explicit. The dependence on
$D$ and $n$ is always implicit. When a random walk used in the DLA begins
with a large jump, we say that a {\bf split} occurs. We denote the time of
the $q$\textsuperscript{th} split by
\[
\beta_q = \inf \{t : S^q_t \neq\emptyset\}.
\]
This is the {\bf birth time} of the $q$-SDLA. The {\bf birth point}
i.e.\ the first point in $S^q$ will be denoted by $b_q$. We will also care about the
first time at which a split occurs starting from a point $a\in S^q_t$, and
denote this by $\zeta_q$. Finally, to consider the time at which the parts
of the SDLA reaches certain sizes, we use $\tau_q(m)$ and $\htau_q(m)$. We
shall also consider the time at which the total size of the SDLA is $m$,
denoted $\sigma(m)$.

In the next few sections we will investigate SDLA as an independent
object. We will only return to the coupling of DLA and SDLA in
\S\ref{sec:coupling}. Nevertheless we might as well explain it roughly
at this point, to give the reader some perspective. The coupling is
very natural. Before the $q^\textrm{th}$ split of the DLA, we use the
same activation times for the DLA and for the $\widehat S$ part of the
SDLA, and the same random walks. They evolve the same. After the
$q^\textrm{th}$ split, as long as $A_t=\widehat{S}_t\cup S_t$ and
$\widehat{S}_t\cap S_t=\emptyset$ we again use the same activation
times and the same random walks. Now there is no deterministic guarantee that they
evolve the same, but we will see that, for $\alpha<\frac 13$ they
indeed do, with high probability. Once one of these conditions is
violated, we simply let them evolve independently (the exact condition
for ``separation'' of the DLA and SDLA is a little different, see
lemma \ref{L:SisA} below, but the above serves as a good approximation).

\subsection{Splitting estimates}

We now choose the parameter $D$ for the SDLA. Define
\[
M_n :=
\Med \left(\sum_{i=1}^{n}\frac{1}{\capa(A_{\tau(i)})} ; \frac{5}{6} \right)
\qquad M := M_{n/\log n},
\]
where $\Med(X;p)$ is the ``$p$-Median'' (quantile) of the variable $X$
i.e.
\begin{equation}\label{eq:Med}
\Med(X;p) = \sup\{t: \P(X<t) < p \}.
\end{equation}
Note that points are added to $A$ at rate $\capa(A)$, so
$\sum_{i=1}^{n}\frac{1}{\capa(A_{\tau(i)})}$ is an estimate for $\tau(n)$.
With this $M$, define
\begin{equation}\label{eq:Ddef}
  D := \left(\frac{6C_1 n M}{\log n} \right)^{1/\alpha},
\end{equation}
where $C_1$ is such that the random walk jump distribution satisfies
$\P(|X|>t) < C_1 t^{-\alpha}$ for all $t$.

Note that $\capa(A) \leq |A|$, and therefore $M \geq \sum_{1}^{n/\log n}
i^{-1} \geq c \log n$. It follows that
\begin{equation}\label{eq:Dgeq}
D \geq c n^{1/\alpha}.
\end{equation}
We show
below that this is not far from the truth: $M = n^{o(1)}$. The purpose of
this definition of $D$ is to have both lower and upper bounds on the
occurrence of such large jumps and in this way to control the branching of
the SDLA.

\begin{lemma}\label{L:few_splits}
  For $n$ sufficiently large, the probability that a DLA splits
  before (continuous) time $\min(2M,\tau(n/\log n))$ is at most $\frac13$.
\end{lemma}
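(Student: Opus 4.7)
The plan is to reduce the bound to a straightforward first-moment estimate: I will bound the expected number of splits occurring up to time $T := \min(2M,\tau(n/\log n))$ and apply Markov's inequality. The parameter $D$ was defined in \eqref{eq:Ddef} precisely so that this expectation comes out to $1/3$.

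First I will bound the total number of random walks launched by the DLA dynamics up to time $T$. Since each particle is activated at rate $1$ and $|A_t|\le n/\log n$ for every $t\le\tau(n/\log n)$, the launch times form a counting process whose instantaneous rate is at most $n/\log n$ throughout $[0,T]$. Hence if $N$ denotes the number of launches by time $T$, standard domination by a Poisson process gives
\[
\E[N]\le \E\bigg[\int_0^T|A_t|\,dt\bigg]\le 2M\cdot\frac{n}{\log n}.
\]

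Next I will use that, by definition of the dynamics, the first step $R_1-R_0$ of each launched walk is distributed as a fresh single step of $R$ and is independent of the aggregate's history up to the launch time. Thus conditionally on $N$, the number of splits is stochastically dominated by $\mathrm{Bin}(N,p)$ with $p\le \P(|R_1|>D)\le C_1 D^{-\alpha}$. Hence the expected number of splits by time $T$ is at most
\[
2M\cdot\frac{n}{\log n}\cdot C_1 D^{-\alpha}=\frac{1}{3},
\]
the equality coming from the substitution $D^{\alpha}=6C_1 nM/\log n$ from \eqref{eq:Ddef}. Markov's inequality then yields the desired bound.

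The computation is essentially mechanical, so I do not expect a real obstacle. The only place where care is needed is in justifying that the rate of launches is bounded by $n/\log n$ on $[0,T]$, which is immediate from the definition of $\tau(n/\log n)$, and in making sure that the ``fresh first step'' property of each launched walk is available, which is built into the construction of the process.
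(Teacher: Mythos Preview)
Your argument is correct and follows essentially the same route as the paper: both bound the expected number of splits by $2M\cdot\frac{n}{\log n}\cdot C_1 D^{-\alpha}=\tfrac13$ and invoke Markov. The paper phrases this directly as a bound on the instantaneous split rate (namely $|A_t|\,\P(|X|>D)\le \tfrac{1}{6M}$ for $t\le T$, hence domination by a Poisson variable with mean $\tfrac13$), rather than separating into the launch count times the per-launch split probability, but the computation is identical.
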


\begin{lemma}\label{L:many_splits}
  There exists a $c_2>0$ so that
\[
\P\Big(\textrm{$A$ splits fewer than $\log n$ times by $\tau(n/2)$ and
  $\capa(A_\tau) < c_2 n/M$} \Big)
\le C n^{-c}.
\]
\end{lemma}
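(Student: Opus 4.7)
The plan is to exploit that on $E := \{\capa(A_\tau) < c_2 n/M\}$ the DLA grows very slowly, so by $\tau(n/2)$ an enormous number of random-walk activations accumulate, and since each activation is independently a split with probability $p = \P(|R_1|>D)$, many splits must occur.

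First, I would upgrade $E$ using monotonicity of capacity (which follows from the variational definition~\eqref{eq:variational}): on $E$, $\capa(A_{\tau(i)}) \le c_2 n/M$ for every $i \le n$. Since the sojourn $\tau(i+1)-\tau(i)$, conditional on $\cF_{\tau(i)}$, is exponential with rate $\capa(A_{\tau(i)})$, on $E$ it stochastically dominates an independent $\eta_i \sim \mathrm{Exp}(c_2 n/M)$ via the usual inverse-CDF coupling. Hence the activation measure
\[
V := \int_0^{\tau(n/2)} |A_s|\,ds \;=\; \sum_{i=1}^{n/2-1} i\bigl(\tau(i+1)-\tau(i)\bigr)
\]
dominates $V_0 := \sum_{i=1}^{n/2-1} i\eta_i$ pointwise on $E$. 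One checks that $\E[V_0] \approx nM/c_2$ and $\mathrm{Var}(V_0) \le CnM^2/c_2^2$, so Chebyshev gives $V \ge cnM$ off an event of probability $O(1/n)$.

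Second, I would argue that the split counting process $N^{\mathrm{sp}}(t)$ is a Cox process with stochastic intensity $p|A_t|$ in the DLA's natural filtration. Indeed, constructing the DLA from i.i.d.\ exponential$(1)$ inter-activation clocks attached to each point together with independent random walks, the indicators $\mathbf{1}\{|R_1^{(\cdot)}-a|>D\}$ are i.i.d.\ Bernoulli$(p)$, independent of all clocks and subsequent walk steps. The time-change formula for counting processes then gives, for any stopping time $T$ and deterministic $L>0$,
\[
\P\bigl(N^{\mathrm{sp}}(T) \le \log n,\ pV \ge L\bigr) \;\le\; \P\bigl(\mathrm{Poisson}(L) \le \log n\bigr).
\]
Taking $T=\tau(n/2)$ and combining the bound $p \ge cD^{-\alpha} = c\log n/(6C_1 nM)$ with the previous step, on $E$ we have $pV \ge c'\log n/c_2$ outside an $O(1/n)$ set. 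Choosing $c_2$ small enough makes $c'/c_2$ arbitrarily large, so the Poisson tail is at most $n^{-c}$ for arbitrarily large $c$. Summing the two error terms yields $O(n^{-c})$ for some positive $c$.

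The main obstacle is the apparent feedback between splits and the DLA trajectory: a split adds a far-away particle and thereby alters future capacities and $\tau(n/2)$ itself. The advance-sampling construction resolves this cleanly, as the split indicators become intrinsic Bernoulli$(p)$ marks; this lets $N^{\mathrm{sp}}$ have the compensator $p\int_0^t|A_s|\,ds$ in the DLA's own filtration, and the time-change argument then applies without any conditioning on future events.
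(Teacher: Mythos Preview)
Your argument is correct and takes a genuinely different route from the paper's. The paper works step-by-step in discrete time: for each $k\in[n/4,n/2]$ it lower-bounds the conditional probability that a split occurs in the single interval $(\tau(k),\tau(k+1)]$ by comparing the split rate $p|A_{\tau(k)}|$ against the addition rate $\capa(A_{\tau(k)})$; on $\{\capa(A_\tau)<c_2 n/M\}$ this is at least $5(\log n)/n$, so the indicators dominate i.i.d.\ Bernoullis and a binomial tail bound finishes. You instead aggregate: you lower-bound the total activation measure $V=\int_0^{\tau(n/2)}|A_s|\,ds$ on $E$ through long sojourns, then invoke the time-change for counting processes to compare $N^{\mathrm{sp}}(\tau(n/2))$ with a $\mathrm{Poisson}(pV)$ variable. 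This uses slightly heavier machinery but is arguably cleaner, since it sidesteps the competing-rates step (where in the paper one must note that ``split'' and ``successful addition'' are not disjoint events). One small point worth making explicit: for the inverse-CDF coupling you need that the sojourns $\tau(i+1)-\tau(i)$ are conditionally independent $\mathrm{Exp}(Q_i)$ given the \emph{entire} discrete chain $(A_{\tau(m)})_m$, not merely given $\cF_{\tau(i)}$; setting $\tilde\eta_i=Q_i(\tau(i+1)-\tau(i))$ then yields i.i.d.\ $\mathrm{Exp}(1)$ variables independent of $(Q_m)$, and $\eta_i=\tilde\eta_i M/(c_2 n)$ does the job. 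This is exactly the fact established (via Lemma~\ref{L:condpois}) in the proof of Lemma~\ref{L:tonlogn}, so the step is sound.
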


Since the splits of DLA and SDLA are the same, up to the
$q^\textrm{th}$ split, these lemmas give information also on SDLA. We
will use them mainly for DLA, though.

We will see that $\tau(n/\log n)$ is typically less than $2M$. Thus we
argue that $A$ is likely to accumulate $n/\log n$ points without splitting,
but very likely to split many times before accumulating $n/2$ points,
unless it has large capacity.

\begin{proof}[Proof of Lemma~\ref{L:few_splits}]
  Given $A_t$, the rate at which splits occur is the probability of a large
  random walk jump times $|A_t|$, namely $|A_t|\P(|X|>D) \leq |A_t|\cdot
  C_1 D^{-\alpha}$. Using the definition of $D$ \eqref{eq:Ddef}, as long as
  $|A| < n/\log n$ the rate of splits is at most $\frac{1}{6M}$. Therefore
  the number of splits up to time $\min(2M,\tau(n/\log n))$ is
  stochastically dominated by a Poisson process with rate $\frac{1}{6M}$
  and time interval $2M$. It follows that the expected number of splits by
  time $\min(2M,\tau(n/\log n))$ is at most $\frac13$.
\end{proof}

\begin{proof}[Proof of Lemma~\ref{L:many_splits}]
  The
  probability of a large random walk jump is at least $c D^{-\alpha}$, so
  the splitting rate is at least
  \[
  |A_t| c D^{-\alpha} = \frac{c (\log n)|A_t|}{n M}.
  \]
  The rate at which new points are added to $A_t$ is $\capa(A_t)$. Thus the probability that a split occurs before a point is added to $A_t$
  is at least
\[
\frac{c (\log n)| A_t|}{n M \capa(A_t)}.
\]
With this $c$,
  fix $c_2 = c/20$. If $|A_t|\geq n/4$ and $\capa(A_t) < c_2 n/M$
  then the probability of a new split is at least $5(\log n)/n$.

  Let $X_k$ be the indicator of the event that a split occurs in the
  interval $(\tau(k),\tau(k+1)]$, or $\capa(A_{\tau(k)}) \geq c_2
    n/M$. Then for $k\in[\frac{1}{4}n,\frac{1}{2}n]$, the $X_k$'s
  stochastically dominate i.i.d.\ Bernoulli random variables with mean
  $5(\log n)/n$, and so
  \[
  \P\bigg(\sum_{n/4}^{n/2} X_k < \log n\bigg) < C e^{-c\log n} = C n^{-c}.
  \]
  However, if $\sum_{n/4}^{n/2} X_k \geq \log n$ then either $\log n$
  splits occur or else at some time $\capa(A_t)$ exceeded $c_2
    n/M$. In the latter case monotonicity implies
  $\capa(A_\tau)>c_2 n/M$.
\end{proof}

\subsection{Growth estimates}

We shall need the following estimates for the growth rate of the DLA and SDLA.

\begin{lemma}\label{L:tonlogn}
  $\P\left(\tau(n/\log n) > 2M \text{ and }
    \sum_{k=1}^{n/\log n} \frac{1}{\capa(A_{\tau(k)})} \leq M \right)
  < C n^{-c}$.
\end{lemma}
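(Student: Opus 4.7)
The plan is to condition on the discrete skeleton $(A_{\tau(k)})$ of the process and apply a Chernoff bound for sums of independent exponentials. The continuous-time DLA decomposes cleanly in the standard way: at any time $t$ the total rate of $|A_t|$ increasing is $\capa(A_t)$, and the location of the next jump is independent of its timing. Consequently $(A_{\tau(k)})$ is a Markov chain with the discrete DLA transitions, and conditionally on this skeleton the interarrival times $T_k = \tau(k) - \tau(k-1)$ are independent exponentials with $T_k \sim \mathrm{Exp}(\capa(A_{\tau(k-1)}))$.

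I would write $N = \lfloor n/\log n\rfloor$, $c_k = 1/\capa(A_{\tau(k)})$ and $S = \sum_{k=1}^{N} c_k$. The event $\{S\le M\}$ is skeleton-measurable, and conditionally on the skeleton $\tau(N) = \sum_{k=1}^{N-1} T_{k+1}$ is a sum of independent exponentials of means $c_k$ whose conditional expectation is $\sum_{k=1}^{N-1} c_k \le S$. So it suffices to establish, on $\{S\le M\}$, the pointwise conditional bound $\P(\tau(N)>2M\mid\mathrm{skeleton}) \le C n^{-c}$. Since $\capa$ is monotone in the set and every $A_{\tau(k)}$ contains $0$, we have $\capa(A_{\tau(k)}) \ge \capa(\{0\}) > 0$, so there is a constant $C_0$ with $c_k \le C_0$ uniformly; in particular $\sum_k c_k^2 \le C_0 S \le C_0 M$ on this event.

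I would then use $-\log(1-x)\le x+x^2$ on $[0,\tfrac12]$ to get, for any $\lambda \le 1/(2C_0)$,
$$
\log\E\bigl[e^{\lambda\tau(N)}\bigm|\mathrm{skeleton}\bigr]
= \sum_{k=1}^{N-1}\log\frac{1}{1-\lambda c_k}
\le \lambda S + \lambda^2 C_0 M.
$$
A Chernoff bound with $\lambda = 1/(2C_0)$ then yields, on $\{S\le M\}$,
$$
\P\bigl(\tau(N) > 2M \bigm| \mathrm{skeleton}\bigr)
\le \exp\bigl(-2M\lambda + \lambda S + \lambda^2 C_0 M\bigr)
\le \exp(-M/(4C_0)).
$$
Since $M \ge c\log n$ (as noted after the definition of $D$ in \eqref{eq:Ddef}), this bound is at most $n^{-c'}$ for some $c'>0$, and integrating over the skeleton gives the lemma. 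The only point that requires care is the skeleton/waiting-time decomposition, but this is the standard property of continuous-time Markov chains, so I do not foresee a genuine obstacle.
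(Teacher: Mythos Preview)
Your proof is correct and follows essentially the same approach as the paper: condition on the discrete skeleton so that the interarrival times become independent exponentials, then apply an exponential-moment (Chernoff) bound and use $M \ge c\log n$. The only minor difference is that the paper controls the quadratic term via $\capa(A_{\tau(k)}) \ge ck^{1-\alpha}$ (Lemma~\ref{lem:mincapa}) to get $\sum_k 1/Q_k^2 \le C$, whereas you use the more elementary bound $c_k \le C_0 = 1/\capa(\{0\})$ to get $\sum_k c_k^2 \le C_0 M$; both yield $e^{-cM} \le n^{-c}$.
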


Since by definition $\sum \frac{1}{\capa(A_{\tau(k)})}$ is quite likely to
be less than $M$, it follows that $\tau(n/\log n)$ is usually less than
$2M$. Before we start with the proof of lemma \ref{L:tonlogn} let us
note the following

\begin{lemma}\label{L:condpois}
Let $X_1,\dotsc,X_N$ be independent Poisson clocks (not necessarily
with the same rates). Let $T$ be the first time one of them activated,
and let $E$ be the event that the first to activate was $X_1$. Then
$T$ conditioned on $E$ has the same distribution as $T$.
\end{lemma}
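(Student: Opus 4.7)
The plan is to reduce the claim to the standard fact about competing exponentials. Write $\lambda_i$ for the rate of the Poisson clock $X_i$, and let $T_i$ denote the first activation time of $X_i$; then $T_i\sim\mathrm{Exp}(\lambda_i)$ and the $T_i$ are independent. In this notation $T=\min_i T_i$ and $E=\{T_1<T_j\text{ for all }j\neq 1\}$.

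Next, I would compute the joint law of $T$ and $E$ directly. Conditioning on the value of $T_1$ and using independence of $T_2,\dotsc,T_N$,
\[
\P(T>t,\,E)
=\int_t^\infty \lambda_1 e^{-\lambda_1 s}\prod_{j\neq 1}\P(T_j>s)\,ds
=\int_t^\infty \lambda_1 e^{-\Lambda s}\,ds
=\frac{\lambda_1}{\Lambda}e^{-\Lambda t},
\]
where $\Lambda=\sum_i\lambda_i$. Specialising to $t=0$ gives $\P(E)=\lambda_1/\Lambda$, and the unconditional distribution of $T$ is $\P(T>t)=e^{-\Lambda t}$ (it is the minimum of independent exponentials). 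Therefore $\P(T>t\mid E)=e^{-\Lambda t}=\P(T>t)$, which is exactly the claim.

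There is no real obstacle here — the lemma is the standard decoupling property of competing exponentials, and the only thing to be careful about is that the rates $\lambda_i$ need not coincide, which is automatically handled by the computation above since $\lambda_1$ cancels between numerator and denominator. One could equivalently phrase the argument using the superposition property: the merged process has rate $\Lambda$, the identity of the first clock to fire is independent Multinomial with weights $\lambda_i/\Lambda$, and this independence is precisely the statement of the lemma.
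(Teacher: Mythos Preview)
Your proof is correct and is exactly the ``straightforward calculation'' the paper alludes to but omits. There is nothing to compare: the paper does not give an alternative argument, and your computation is the standard one.
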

\begin{proof}This is a straightforward calculation and we omit the
  details.
\end{proof}

\begin{proof}[Proof of lemma~\ref{L:tonlogn}]
  Points are added to $A_t$ at rate $\capa(A_t)$. Let $X_m =
  \tau(m+1)-\tau(m)$ be the time it takes to add the $m+1$\textsuperscript{st} point. Let
  $Q_m = \capa(A_{\tau(m)})$ be the rate at which it is added, and $\F$ the
  sigma-field generated by all $Q_m$'s. Examine one $X_m$ conditioned
  on $\F$. The conditioning gives us the capacity of $A_m$ as well as
  that of $A_{m+1}$ which gives some information on the point that was
  activated to increase $A_m$. Nevertheless, by lemma \ref{L:condpois}
  this information is irrelevant and we get that
  $X_m$ conditioned on $\F$ is an exponential random variable with
  mean $1/Q_m$. Clearly, conditioning on $\F$ makes the different $X_m$-s
  independent. Thus their sum can be analyzed by standard techniques as follows.

  We have that
  \[
  \E \left(e^{X_m/2}\,|\,\F\right) = \frac{Q_m}{Q_m-1/2} =1+\frac{1}{2Q_m-1}\leq e^{1/(2Q_m-1)}.
  \]
  Since $Q_m \geq 1$, it follows that
  \[
  \E \left( e^{(X_m-1/Q_m)/2}\,|\,\F \right) \leq e^{1/2Q_m^2}.
  \]
  By Lemma \ref{lem:mincapa} we have $Q_m = \capa(A_{\tau_m}) \geq c
  m^{1-\alpha}$, so $\sum_m\frac{1}{2Q_m^2} < C$, and so
  \[
  \E\exp \bigg(\tfrac12 \sum_{m<n/\log n} X_m - \tfrac{1}{Q_m}\bigg) \leq C
  \]
  (note that we integrated $\F$ away). Since $\tau(n/\log n) = \sum_{m<n/\log n} X_m$ we find
  \[
  \P\left(\tau(n/\log n) > M + \sum \frac{1}{Q_m}\right) \leq C e^{-M/2} <
  C n^{-c}
  \]
  (using $M>c\log n$). The claim follows.
\end{proof}

\begin{lemma}\label{L:24M}
With probability at least $\frac{1}{2}-Cn^{-c}$ one has that
\[
\tau(n/\log n) \le \min\{2M, \beta_1 \}
\]
where $\beta_1$ is the time of the first split.
\end{lemma}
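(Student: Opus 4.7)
The plan is to combine the two growth lemmas \lemref{few_splits} and \lemref{tonlogn} with the definition of $M$ as a median. The statement we need is a conjunction of two events: (a) $\tau(n/\log n) \le 2M$ and (b) no split occurs before time $\tau(n/\log n)$; once we have both, we automatically have $\tau(n/\log n) \le \min\{2M, \beta_1\}$. I would establish each of these events separately and then apply a union bound.

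\textbf{Event (a).} First recall the definition of the quantile $M$, namely $M = \Med(Z;5/6)$ with $Z = \sum_{k=1}^{n/\log n} 1/\capa(A_{\tau(k)})$. From (\ref{eq:Med}) and standard properties of quantiles one gets $\P(Z \le M) \ge 5/6$. Combining with \lemref{tonlogn}, which says
\[
\P\bigl( \tau(n/\log n) > 2M \text{ and } Z \le M \bigr) \le Cn^{-c},
\]
we conclude that
\[
\P\bigl( \tau(n/\log n) \le 2M \bigr) \ge \tfrac{5}{6} - Cn^{-c}.
\]

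\textbf{Event (b).} This is immediate from \lemref{few_splits}: the probability that a split occurs before the (stopping) time $\min(2M, \tau(n/\log n))$ is at most $\tfrac13$, so the complementary event — no split in this interval, i.e.\ $\beta_1 \ge \min(2M, \tau(n/\log n))$ — has probability at least $\tfrac{2}{3}$.

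\textbf{Combination.} On the intersection of the two events above, $\tau(n/\log n) \le 2M$ and $\beta_1 \ge \min(2M, \tau(n/\log n)) = \tau(n/\log n)$, hence $\tau(n/\log n) \le \min\{2M, \beta_1\}$ as desired. A union bound gives
\[
\P\Bigl( \tau(n/\log n) \le \min\{2M, \beta_1\} \Bigr) \ge \tfrac{5}{6} + \tfrac{2}{3} - 1 - Cn^{-c} = \tfrac{1}{2} - Cn^{-c},
\]
which is the claim. There is no real obstacle here beyond keeping track of the median inequality and the two error probabilities; the only subtle point is that \lemref{few_splits} is stated with the stopping time $\min(2M, \tau(n/\log n))$, which is precisely what we need so that event (b) combines with event (a) to give no split before $\tau(n/\log n)$.
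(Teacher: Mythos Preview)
Your proof is correct and essentially identical to the paper's own argument: the paper also derives $\P(\tau(n/\log n)\le 2M)\ge \tfrac56 - Cn^{-c}$ from Lemma~\ref{L:tonlogn} and the definition of $M$, derives $\P(\beta_1\ge\min\{2M,\tau(n/\log n)\})\ge \tfrac23$ from Lemma~\ref{L:few_splits}, and then intersects the two events to obtain probability at least $\tfrac12 - Cn^{-c}$. Your write-up simply makes the quantile inequality and the union-bound arithmetic more explicit.
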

\begin{proof}
By lemma \ref{L:tonlogn} and the definition of $M$,
\[
\P(\tau(n/\log n)\le 2M)\ge \frac 56 - Cn^{-c}.
\]
By lemma \ref{L:few_splits}
\[
\P(\beta_1\ge\min\{2M,\tau(n/\log n)\})\ge \frac 23.
\]
When both events happen (which happens with probability $\ge \frac 12
-Cn^{-c}$) we get the required inequality.
\end{proof}
Recall that for some fixed $q$, $\sigma(m)$ is the (continuous) time that the size of
the SDLA (namely $|S_t|+|\hS_t|$) first reaches $m$.

\begin{lemma}\label{L:halfnton}
  Fix $q\in\N$, and consider the event $\cB$ that $\sigma(n) - \sigma(n/2)
  < 2M$ and $\capa(S_{\sigma(n)}) + \capa(\hS_{\sigma(n)}) < n/5M$. Then
  $\P(\cB) < C e^{-cn}$.
\end{lemma}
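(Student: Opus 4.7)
The plan is to use monotonicity of capacity to convert the capacity bound in the definition of $\cB$ into a uniform upper bound on the growth rate of the SDLA throughout the critical window, and then apply a Chernoff bound for a Poisson process.

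First, I would observe that $S_t$ and $\hS_t$ are each nondecreasing in $t$, and capacity is monotone under set inclusion. Hence on $\cB$, for every $t \le \sigma(n)$,
\[
\capa(S_t) + \capa(\hS_t) \le \capa(S_{\sigma(n)}) + \capa(\hS_{\sigma(n)}) < \frac{n}{5M}.
\]
Next, recall that new points are added to each component of the SDLA at rate equal to that component's capacity (this is the reformulation of DLA in continuous time used to define SDLA). The only additional contribution to growth is the single birth event at $\beta_q$, which adds $b_q$ to $S$. Thus the counting process $N_t := |S_t|+|\hS_t|-1$ has intensity $\capa(S_t)+\capa(\hS_t)$ plus at most one atomic contribution over all time.

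To make the stochastic domination rigorous I would introduce the stopping time
\[
T := \inf\{t\ge 0 : \capa(S_t)+\capa(\hS_t) \ge n/5M\}
\]
and consider the stopped process $N_{t\wedge T}$, whose intensity is deterministically bounded by $n/5M$ away from the (at most one) split jump. Consequently the number of non-split increments of $N_{t\wedge T}$ during any time interval of length $2M$ is stochastically dominated by a Poisson random variable $Y$ with mean $2M \cdot n/(5M) = 2n/5$. On $\cB$ one has $T > \sigma(n)$, so $N$ and $N_{\cdot\wedge T}$ agree on $[\sigma(n/2),\sigma(n)]$; together with $\sigma(n) - \sigma(n/2) < 2M$ this forces $Y + 1 \ge n/2$. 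Since $2n/5 < n/2 - 1$, the Cramér/Chernoff bound for Poisson tails gives $\P(Y \ge n/2 - 1) \le e^{-cn}$, and the lemma follows.

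I expect the only delicate point to be the stochastic-domination step, because $N$'s intensity is state-dependent and one must be careful about filtrations and independence when coupling to a homogeneous Poisson clock. Introducing the stopping time $T$ and using monotonicity of the capacity handles this cleanly: once the process is stopped the dominating rate is a deterministic constant, so the coupling to a rate-$(n/5M)$ Poisson process is standard. Everything else (monotonicity of capacity, the rate description of SDLA, Poisson large deviations) is either immediate or a textbook input.
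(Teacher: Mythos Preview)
Your proposal is correct and follows essentially the same approach as the paper: introduce a stopping time at which the total capacity first reaches $n/5M$, dominate the stopped counting process by a rate-$n/5M$ Poisson process over an interval of length $2M$, and apply a Poisson tail bound to see that accumulating $n/2$ points is exponentially unlikely. Your treatment is in fact slightly more careful than the paper's, since you explicitly account for the single split event at $\beta_q$ (which may add a point regardless of escape), whereas the paper simply asserts that the addition rate equals $\capa(S_t)+\capa(\hS_t)$ and absorbs this $+1$ implicitly.
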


\begin{proof}
  Note that $\capa(S_t)$ (resp.\ $\capa(\hS_t)$) is the rate at which
  points are added to $S_t$ (resp.\ $\hS_t$).  Consider the process $N_t$
  that counts points being added to $S_t$ or $\hS_t$ starting at time
  $\sigma(n/2)$, stopped at the first time $t$ when $\capa(S_t) +
  \capa(\hS_t) > \frac{n}{5M}$.  Then $N_t$ is stochastically dominated by a
  Poisson process with intensity $\frac{n}{5M}$.  Up to this stopping time,
  we have $N_t = |S_{\sigma(n/2)+t}| + |\hS_{\sigma(n/2)+t}| - n/2$.

  Thus the number of points added within time $2M$ and before the stopping
  time is dominated by a $\textrm{Poi}(2M\cdot\frac{n}{5M})$ variable. The
  probability that this exceeds $n/2$ is exponentially small. 
  However, if $N_{2M} < n/2$ then either $\sigma(n)-\sigma(n/2) >
  2M$ or else $\capa(S_t) + \capa(\hS_t)$ exceeds $\frac{n}{5M}$ before
  time $\sigma(n)$.  The claim follows by monotonicity of capacity.
\end{proof}

\subsection{Interaction probabilities}\label{sub:interaction}

We now analyze the probability of an interaction between $S^q$ and $\hS^q$.
The bounds we get will imply that for any given $q$, the law of the DLA
$A_t$ is close to the law of $S_t \cup \hS_t$.
Fix some $q\in\N$, and consider the SDLA. The set $S_t\cup
\hS_t$ evolves very similarly to a DLA. The difference is that if a random
walk from $S_t$ hits $\hS_t$ or vice versa, then a point may be added to
the SDLA but not to the DLA. Thus we need to bound the probability of such
an intersection happening.

We shall be interested in the union of all random walk trajectories that
originate from a point of $S^q$ or $\hS^q$ up to time $t$.  More precisely,
$T^q_t$ is the union of all trajectories of random walks that started from
points of $S^q$ up to time $t$ and escaped (i.e.\ led to the addition of a
point to $S^q$).  Similarly, $\hT^q_t$ is the union of all trajectories of
random walks that started from points of $\hS^q$ up to time $t$ and
escaped.  However, the walk at time $\beta_q$ is treated differently.  Even
though it starts at a point of $\hS^q$, its first step is to $b_q\in S^q$,
and we include this walk minus its starting point in $T^q$ and not in
$\hT^q$.  Note that paths in both $T^q$ and $\hT^q$ include their starting
points, so $S^q\subset T^q$ and $\hS^q\subset\hT^q$.

Our goal in this section is the following:

\begin{lemma}\label{L:TSnu}
  For any $q\in\N$,
  \[
  \P\left(T^q_\sigma \cap \hS^q_\sigma \neq\emptyset\right) \leq C n^{-c}
  \qquad \text{and} \qquad
  \P\left(\hT^q_\sigma \cap S^q_\sigma \neq\emptyset\right) \leq C n^{-c}.
  \]
\end{lemma}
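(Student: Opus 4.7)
The plan is to exploit the mutual independence of $S^q$ and $\hS^q$ after time $\beta_q$, together with a Green-function bound on the probability that random walks spawned in one component hit the other. By symmetry I focus on $\P(T^q_\sigma \cap \hS^q_\sigma \neq \emptyset)$; the argument for $\hT^q_\sigma \cap S^q_\sigma$ is identical.

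After time $\beta_q$ the sets $S^q_t$ and $\hS^q_t$ evolve as independent DLAs, so $\hS^q_\sigma$ is independent of the random walks spawned from $S^q$. Since $T^q$ is contained in the union of \emph{all} trajectories---escape or not---originating from points of $S^q$ up to time $\sigma$, it suffices by Markov to bound the expected total number of visits from such walks to $\hS^q_\sigma$. For a walk starting at $a$, the expected number of visits to a fixed $y$ equals $G(a-y)\approx|a-y|^{\alpha-1}$ by Lemma~\ref{lem:G}. Walks are spawned from $S^q_t$ at total rate $|S^q_t|\le n$, so on a good event $\cE$ on which the minimum distance from $S^q_\sigma$ to $\hS^q_\sigma$ is at least $cD$ and $\sigma\le2M$, one has
\[
\E\bigl[\,|\{\text{visits from $S^q$-walks to } \hS^q_\sigma\}| \cdot \1_{\cE}\,\bigr]
\le C\sigma\,n^2 D^{\alpha-1}\le CMn^2D^{\alpha-1}.
\]
Substituting $D^\alpha=6C_1nM/\log n$ (and using $M\ge c\log n$, which follows from $\capa(A)\le|A|$ and the definition of $M$), this becomes
\[
\frac{Cn^3M^2}{D\log n}
\le Cn^{3-1/\alpha}M^{2-1/\alpha}(\log n)^{1/\alpha-1}
\le C(\log n)\,n^{3-1/\alpha},
\]
where in the last step I used $2-1/\alpha<0$ to bound $M^{2-1/\alpha}\le(c\log n)^{2-1/\alpha}$. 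Since $\alpha<\frac13$ gives $3-1/\alpha<0$, this is at most $Cn^{-c'}$ for $c'=1/\alpha-3>0$. Together with a bound on $\P(\cE^c)$, Markov then yields the lemma.

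The main obstacle is to establish the distance bound $|a-y|\gtrsim D$ for all $a\in S^q_\sigma$ and $y\in\hS^q_\sigma$ with sufficiently high probability. The event $\{\sigma>2M\}$ is handled by Lemmas~\ref{L:tonlogn},~\ref{L:halfnton} and the choice of $M$, contributing at most $Cn^{-c}$. The distance bound itself is delicate: at time $\beta_q$ we have $|b_q-\hS^q_{\beta_q}|>D$, but both components can subsequently gain far-flung points via their own internal big jumps (the $(q+1)$-th split onward contributes to $\hS^q$, and analogously for $S^q$). My plan for this piece is to note that, with the chosen $D$, the rate of $>D/10$-jumps in either component is at most $Cn D^{-\alpha}=O((\log n)/M)$, so the expected number over $[0,2M]$ is $O(\log n)$; conditional on a jump being large, its landing point is distributed with density $\lesssim D^{-1-\alpha}$, so the probability it lands within $cD$ of the independently determined location of $S^q$ (a set of size $\le n$) is $\lesssim nD^{-\alpha}$ per jump. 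Summing over the $O(\log n)$ big jumps and exploiting the factor of $\log n$ slack built into the definition of $D$ yields a $Cn^{-c}$ bound on $\P(\cE^c)$, completing the proof.
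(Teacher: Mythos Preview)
Your argument has a genuine gap at its centre: the good event $\cE$ cannot be established with high probability, and your Green-function bound collapses without it.

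First, the claim ``at time $\beta_q$ we have $|b_q - \hS^q_{\beta_q}| > D$'' is incorrect as stated: the split condition only gives $|b_q - a| > D$ for the single activated point $a$. The first $q-1$ large jumps stay in $\hS^q$ (they do not split off in the $q$-SDLA), so $\hS^q_{\beta_q}$ may already be spread over a region of diameter $\gg D$ and contain points close to $b_q$. Second, your accounting for later big jumps is off by a crucial factor. Conditioned on $|\nu| > D/10$, the density of $\nu$ at any value $v$ with $|v|\sim D$ is $\approx D^{-1}$, not $D^{-1-\alpha}$; hence the conditional probability of landing in a fixed interval of length $cD$ at distance $\sim D$ is a constant $\approx c$, not $nD^{-\alpha}$. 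With $O(\log n)$ big jumps this gives no useful bound on $\P(\cE^c)$. (There is also no reason to expect $\sigma\le 2M$: Lemma~\ref{L:halfnton} in fact gives $\sigma(n)-\sigma(n/2)\ge 2M$ unless the capacity is already large, so the time-window part of $\cE$ is also problematic.)

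The paper's proof avoids any global distance bound. The only randomness it exploits is the single large step $\nu$ that creates $S^q$: this $\nu$ is independent of both $\hS^q$ and of $T^q - b_q$. After enlarging $\sigma$ to the fixed-size stopping times $\tau^q(n)$ and $\htau^q(n)$ (so that the dependence through $\sigma$ disappears), the key input is a density estimate (Lemma~\ref{L:Tsmall}): $\E|T\cap I|\le CnL^\alpha$ for any interval $I$ of length $L\ge D$. Averaging over $\nu$ then yields $\P(x-\nu\in T)\le CnD^{\alpha-1}$ for each fixed $x$ (Lemma~\ref{L:Tmissx}), and a union bound over the $n$ points of $\hS^q_{\htau^q}$ gives $Cn^{3-1/\alpha}$. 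The random shift $\nu$ does the work your event $\cE$ was meant to do; no control on the relative geometry of $S^q_\sigma$ and $\hS^q_\sigma$ is needed.
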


We first argue that after a large jump the walk is unlikely to hit any
given point.

\begin{lemma}\label{L:jumpreturn}
  Consider an $\alpha$-random walk $R$ started at $a$, and a point $z$, and let $R_1$
  be the first jump of the random walk. Then $\P_a(R\text{ hits }z,
  |R_1-a|>L) \leq C/L$. 

Consequently, for any fixed set $S$, the
  probability of making a jump of size at least $L$ and hitting $S$ is at
  most $C|S|/L$.
\end{lemma}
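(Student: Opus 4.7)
\textbf{Proof plan for Lemma~\ref{L:jumpreturn}.}
The plan is to condition on the first step $R_1=y$ and use the Green-function identity together with the estimate $G(x)\approx |x|^{\alpha-1}$ from Lemma~\ref{lem:G}. Writing $T_z=\inf\{n\ge 0:R_n=z\}$, the standard identity $\P_y(T_z<\infty)=G(y-z)/G(0)$ gives
\[
\P_a(R\text{ hits }z,\ |R_1-a|>L)
= \sum_{|y-a|>L} p_{a,y}\,\P_y(T_z<\infty)
\le \frac{C}{G(0)} \sum_{|y-a|>L} |y-a|^{-1-\alpha}\,|y-z|^{\alpha-1},
\]
where we used $p_{a,y}\le C|y-a|^{-1-\alpha}$ and $G(y-z)\le C|y-z|^{\alpha-1}$ (with the value at $y=z$ replaced by the constant $G(0)$, which contributes at most a single term of size $\lesssim L^{-1-\alpha}$).

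The heart of the argument is then to show that, setting $u=y-a$ and $v=z-a$,
\[
F(v,L) := \sum_{|u|>L} |u|^{-1-\alpha}\,|u-v|^{\alpha-1} \le \frac{C}{L}
\]
uniformly in $v\in\Z$. I would do this by a dyadic decomposition of the shells $|u|\in[2^k,2^{k+1}]$ with $2^k\ge L$, splitting into three regimes according to whether $|v|\le 2^{k-1}$, $|v|\in[2^{k-1},2^{k+1}]$, or $|v|\ge 2^{k+1}$. In the first regime $|u-v|\approx|u|$, and one sees directly that the shell contributes $O(2^{-k})$. In the second regime $v$ lies inside the shell, so summing $|u-v|^{\alpha-1}$ across the shell yields $O(2^{k\alpha})$, again giving contribution $O(2^{-k})$. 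In the third regime $|u-v|\approx|v|$, and summing geometrically in $k$ from $\log_2 L$ up to $\log_2(|v|/2)$ yields $CL^{-\alpha}|v|^{\alpha-1}\le C/L$, using $|v|\ge 2L$ and $\alpha-1<0$. Adding the three contributions gives $F(v,L)\le C/L$, which is the first claim.

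The second, quantitative, statement follows immediately by a union bound: for a fixed set $S$,
\[
\P_a\bigl(R\text{ makes a jump of size }>L\text{ and hits }S\bigr)
\le \sum_{z\in S} \P_a(R\text{ hits }z,\ |R_1-a|>L)
\le \frac{C|S|}{L}.
\]
There is no real obstacle here; the only mildly delicate step is the dyadic bookkeeping in case~B (the regime $|v|\gtrsim L$), which is where one must be careful that the contributions of the three scales all reduce to $C/L$ without producing any extra factor in $|v|$.
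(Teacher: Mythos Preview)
Your argument is correct and begins exactly as the paper does: condition on the first step and bound the hitting probability by the Green function, reducing the problem to estimating
\[
\sum_{|u|>L} |u|^{-1-\alpha}\,|u-v|^{\alpha-1}.
\]
The difference lies only in how this sum is handled. You perform a dyadic decomposition in $|u|$ and then split each shell into three regimes according to the position of $v$; this works, but it is more machinery than the situation requires. The paper instead splits in a single stroke according to whether $|u-v|\le L$ or $|u-v|>L$. In the first range one uses $|u|^{-1-\alpha}\le L^{-1-\alpha}$ and $\sum_{|i|\le L}|i|^{\alpha-1}\le CL^{\alpha}$; in the second range one uses $|u-v|^{\alpha-1}\le L^{\alpha-1}$ and $\sum_{|u|>L}|u|^{-1-\alpha}\le CL^{-\alpha}$. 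Both pieces give $C/L$ immediately, with no dyadic bookkeeping and no case analysis in $v$. Your third regime (large $|v|$) is precisely where you note the argument becomes ``mildly delicate''; the paper's split sidesteps this entirely because the threshold is set on $|u-v|$ rather than on $|u|$.
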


\begin{proof}
  With $R_1=a+X$, we use that Green's function satisfies $G(z-R_1)
  \approx |z-R_1|^{\alpha-1}$ (lemma \ref{lem:G}) to get
  \[
  \P_a(|R_1-a|>L, R\text{ hits }z) \leq C \sum_{|X|>L} |X|^{-\alpha-1}
  |z-a-X|^{\alpha-1}.
  \]
  In this last sum, terms with $|z-a-X|\leq L$ are bounded by
  $\sum_{-L}^{L} L^{-\alpha-1} |i|^{\alpha-1} \leq C L^{-1}$. Terms with
  $|z-a-X| > L$ are bounded by $\sum_{|X|>L} |X|^{-\alpha-1} L^{\alpha-1}
  \leq C/L$ as well.
\end{proof}

Recall that $b_q$ is the birth point, the first point in $S^q$. By $A-x$ we
denote the translation of a set $A$ by $x$.

\begin{lemma}\label{L:Tsmall}
  Let $T$ be either $T^q_{\tau^q(n)}-b_q$ or $\hT^q_{\htau^q(n)}$ for some
  $q\in\N$, and let $I$ be any interval of length $L\geq D$. Then
  \[
  \E |T\cap I| \leq C n L^\alpha.
  \]
\end{lemma}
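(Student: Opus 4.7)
The plan is to decompose each of the $n$ escape walks contributing to $T$ into three parts: its starting point $a_i$ (in $S^q$ for $T = T^q - b_q$, or in $\hS^q$ for $T = \hT^q$), its first step $p_i = R_1^i$ (which is the newly added point, always in $S^q$ resp.\ $\hS^q$), and the infinite tail $R_2^i, R_3^i, \ldots$. This gives
\[
|T \cap I| \;\le\; |\{a_i\}_{i=1}^n \cap I| \;+\; |\{p_i\}_{i=1}^n \cap I| \;+\; \sum_{i=1}^n |\{R_k^i\}_{k\ge 2} \cap I|.
\]
The first two sums are each bounded by $|S^q \cap I| \le n$ (for the $T^q$ case; the birth walk's starting point is removed by translating by $-b_q$), and since $L \ge D \ge 1$ gives $L^\alpha \ge 1$, these contribute $\le CnL^\alpha$.

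The heart of the argument is the tail estimate. For each walk, conditionally on the added point $p_i$, the tail $(R_2^i, R_3^i, \ldots)$ is a random walk from $p_i$ whose distribution is determined by the DLA conditioning (escape from $A_{i-1}$). Applying Lemma~\ref{lem:G}, for \emph{any} starting point $p$ the unconditioned walk visits $I$ in expectation at most
\[
\sum_{z \in I} G(p-z) \;\le\; C\!\sum_{j=1}^{L} j^{\alpha-1} \;+\; CL\cdot\rho^{\alpha-1}\mathbf{1}_{\rho\ge L} \;\le\; CL^\alpha,
\]
where $\rho = \mathrm{dist}(p, I)$; the bound is uniform in $p$ by a case split on whether $p$ is inside $I$, adjacent, or at larger distance. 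Summed over the $n$ walks this gives the desired $nCL^\alpha$, provided we can transfer the unconditioned bound to the conditioned tails.

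The conditioning is handled via the Doob $h$-transform: the escape-conditioned tail's expected visits to $z$ equal $\tilde G_{A_{i-1}}(p_i, z)\, E_{A_{i-1}}(z)/E_{A_{i-1}}(p_i)$, with $\tilde G_A(p, z) \le G(p-z)$. Averaging over the random starting pair $(a_i, p_i)$ using the gluing measure (\ref{eq:repmu}) and the time-reversal identity $\sum_{a \in A} H_A(a, z) = 1 - E_A(z)$ (derived as in the proof of Lemma~\ref{L:mu_trans}), the factors involving $E_A(p_i)$ collapse and one obtains per walk the bound
\[
\mathbb{E}\bigl[|\{R_k^i\}_{k\ge 2}\cap I|\bigm|A_{i-1}\bigr] \;\le\; \frac{C}{\capa(A_{i-1})}\sum_{z\in I}E_{A_{i-1}}(z)\bigl(1-E_{A_{i-1}}(z)\bigr).
\]

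The main obstacle is bounding $\sum_{z\in I}E_A(z)(1-E_A(z))$ sharply enough. Using the trivial bound $1/4$ per term gives $L/4$, which summed against $1/\capa(A_{i-1})$ over $i$ loses a factor relative to the target; instead one bounds $\sum_z (1-E_A(z)) = \sum_z \mathbb{P}_z[\tau_A < \infty] \le C|A|L^\alpha$ by time-reversal and summing Green's function values. Combined with the constraint $L \ge D$ and the lower bound on $\capa(A_i)$ from Lemma~\ref{lem:mincapa}, the contributions telescope correctly to yield $\sum_i \mathbb{E}[|\{R_k^i\}_{k\ge 2}\cap I|] \le CnL^\alpha$, completing the proof.
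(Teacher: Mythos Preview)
Your $h$-transform computation is correct and leads to the nice identity
\[
\E\bigl[|\text{tail}_i\cap I|\bigm| A_{i-1}\bigr]
=\frac{1}{\capa(A_{i-1})}\sum_{z\in I}E_{A_{i-1}}(z)\bigl(1-E_{A_{i-1}}(z)\bigr),
\]
but the final step does not close as written. Your bound $\sum_{z\in I}(1-E_A(z))\le C|A|L^\alpha$ gives a per-walk contribution of $C|A_{i-1}|L^\alpha/\capa(A_{i-1})$; summing over $i\le n$ with only Lemma~\ref{lem:mincapa} ($\capa(A_i)\ge ci^{1-\alpha}$) yields $CL^\alpha\sum_i i^\alpha\approx CL^\alpha n^{1+\alpha}$, which is too large by $n^\alpha$. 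The hypothesis $L\ge D\ge cn^{1/\alpha}$ gives $n\le CL^\alpha$ but not $n^\alpha\le C$, so there is no ``telescoping'' that saves this.

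The gap is easily repaired, however. For $z\notin A$ the last-exit decomposition (equivalently, Proposition~\ref{prop:HGPI} summed over $a$) gives the \emph{equality}
\[
1-E_A(z)=\sum_{b\in A}E_A(b)\,G(z-b),
\]
so that
\[
\sum_{z\in I\setminus A}\bigl(1-E_A(z)\bigr)
\le\sum_{b\in A}E_A(b)\sum_{z\in I}G(z-b)
\le CL^\alpha\capa(A).
\]
Now the $\capa(A)$ cancels and each walk contributes at most $CL^\alpha$, giving $CnL^\alpha$ directly (the points $z\in I\cap A$ are visited at most once by an escaping walk, contributing $\le n\le CL^\alpha$ in total). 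So your route works once you replace $|A|$ by $\capa(A)$ in that key estimate.

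The paper's argument is genuinely different: it does not condition on escape at all. Instead it bounds the total number of visits to $I$ by \emph{all} walks (escaping or not) between two successive additions to $S^q$. The point is that at each visit to $I$ the walk has probability $\ge cL^{-\alpha}$ of jumping by more than $2L$; by Lemma~\ref{lem:tran_hit} it then avoids $I$ forever with probability $\ge c$, and by Lemma~\ref{L:jumpreturn} it also avoids $S^q$ with probability $\ge 1-C|S^q|/L\ge 1-CL^{\alpha-1}$, which for $L\ge D$ beats $L^{-\alpha}$. Hence each visit to $I$ has a $\ge cL^{-\alpha}$ chance of triggering an addition, so the number of visits between additions is stochastically dominated by a geometric variable of mean $CL^\alpha$. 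This regeneration argument sidesteps the $h$-transform entirely; your approach is more analytic and, once patched, arguably cleaner.
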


Roughly, this is so since we are considering $n$ random walks, and each of
those visits $L^\alpha$ points in $I$. This is somewhat complicated by
the fact that there are also random walks that do not escape $S^q$ but
still visit $I$.

\begin{proof}
  We may assume (by increasing $C$ if necessary) that $L$ is
  sufficiently large. We consider only the case of $T^q$, the case of $\hT^q$ is proved identically.
  We show the stronger fact, that over the time it takes to add a single
  point to $S^q$ the expected number of points visited in $I$ is at most
  $C L^\alpha$.

  Examine the random walk at some time when it is in $I$. The random
  walks we are dealing with have probability at least $cL^{-\alpha}$
  to make a step bigger than $2L$. By lemma \ref{lem:tran_hit} once this happens it
  has probability bigger than some $c$ to never hit $I$ again, hence
  it has probability at least $cL^{-\alpha}$ to make a large step and
  then never hit $I$ again.

  Next apply the previous lemma. We get that (again, after the initial
  step bigger than $2L$) the probability that the random walk hits
  $S^q$ is at most $C|S^q|/L$. Since $|S^q| \le n$ and since $L \ge D
  \ge cn^{1/\alpha}$ (by (\ref{eq:Dgeq})) we get that $C|S^q|/L \le
  CL^{\alpha-1} \ll L^{-\alpha}$ (here we only need $\alpha
  <\frac{1}{2}$). Hence we get, for $L$ sufficiently large, that with probability at least
  $cL^{-\alpha}$ the random walk makes a step bigger than $2L$ and
  after that disappears to infinity, never returning to either $I$ or
  $S^q$. This, of course, adds a point to $S^q$.

  Since all these calculations were independent of the past, we get
  that the number of points visited in $I$ before adding a single
  point to $S^q$ is stochastically dominated
  by a geometric random variable with expectation $CL^\alpha$, proving
  the lemma.
\end{proof}


\begin{lemma}\label{L:Tmissx}
  Let $T$ be either $T^q_{\tau^q}-b_q$ or $\hT^q_{\htau^q}$ for some
  $q\in\N$. Fix any $x\in\Z$, and let $\nu$ be an independent random walk
  step conditioned on $|\nu|\geq D$. Then
  \[
  \P(x-\nu\in T) \leq C n D^{\alpha-1} \leq C n^{2-1/\alpha}.
  \]
\end{lemma}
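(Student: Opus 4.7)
The plan is to reduce the estimate to \lemref{Tsmall} by partitioning $\Z$ into intervals of length $D$, and bounding separately for each interval the probability $\nu$ lands there and the expected number of points of $T$ there.

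First, since $T$ and $\nu$ are independent, Fubini gives
\[
\P(x-\nu \in T)
= \sum_{t\in\Z} \P(\nu = x-t)\,\P(t\in T)
= \E\sum_{t\in T}\P(\nu = x-t).
\]
Because $\nu$ is a single random walk step conditioned on $|\nu|\ge D$, the unconditional tail $\P(|\nu_{\text{uncond}}|\ge D)\approx D^{-\alpha}$ combined with $p_{0,y}\approx |y|^{-1-\alpha}$ yields
\[
\P(\nu = y) \le C D^{\alpha}|y|^{-1-\alpha} \qquad (|y|\ge D),
\]
and $\P(\nu = y)=0$ for $|y|<D$.

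Next I would partition $\Z\setminus(x-D,x+D)$ into the dyadic-like shells $I_k := \{t : 2^{k-1}D \le |x-t| < 2^k D\}$ for $k\ge 1$. On $I_k$ one has $\P(\nu=x-t)\le C D^{\alpha}(2^k D)^{-1-\alpha} = C\,2^{-k(1+\alpha)}D^{-1}$, so
\[
\E\sum_{t\in T}\P(\nu=x-t)
\le \sum_{k\ge 1} C\,2^{-k(1+\alpha)}D^{-1}\,\E|T\cap I_k|.
\]
Each $I_k$ is a union of two intervals of length $2^{k-1}D \ge D$, so \lemref{Tsmall} gives $\E|T\cap I_k|\le C n (2^k D)^\alpha$. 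Plugging this in,
\[
\P(x-\nu\in T) \le C n D^{\alpha-1}\sum_{k\ge 1} 2^{-k(1+\alpha)}\cdot 2^{k\alpha} = C n D^{\alpha-1}\sum_{k\ge 1} 2^{-k},
\]
a geometric series that converges, yielding the first bound $\P(x-\nu\in T)\le C n D^{\alpha-1}$.

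For the second inequality I would simply substitute the lower bound $D\ge c n^{1/\alpha}$ from \eqref{eq:Dgeq}; since $\alpha-1<0$ this gives $D^{\alpha-1}\le C n^{1-1/\alpha}$, hence $n D^{\alpha-1}\le C n^{2-1/\alpha}$. The only thing that needs verifying carefully is the conditional density bound for $\nu$ (which is just an elementary consequence of the tail estimate on the step distribution), and that \lemref{Tsmall} indeed applies to each shell — it does because each $I_k$ is a union of two intervals each of length at least $D$. There is no real obstacle; the argument is essentially a dyadic decomposition matched to the two inputs (step distribution tail and \lemref{Tsmall}).
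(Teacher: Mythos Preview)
Your proof is correct and follows essentially the same approach as the paper: a dyadic decomposition of the range of $\nu$ (equivalently, of $x-t$), a uniform bound $\P(\nu=y)\le C D^\alpha |y|^{-1-\alpha}$ on each shell, application of \lemref{Tsmall} to bound $\E|T\cap I_k|$, and summing the resulting geometric series. The only cosmetic difference is that the paper indexes shells by the value of $\nu$ while you index by the value of $t=x-\nu$; the computations are otherwise identical.
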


\begin{proof}
  Let $I_k = [2^k D, 2^{k+1}D)$. Then we can write
  \[
  \P(x-\nu\in T) = \sum_{k\geq 0} \P(\nu\in I_k, x-\nu\in T) +
  \sum_{k\geq 0} \P(-\nu\in I_k, x-\nu\in T).
  \]
  Since $\P(\nu=y) \leq C D^\alpha y^{-\alpha-1}$ (the $D^\alpha$ comes
  from the conditioning of $\nu$ to be large), we have for $y\in I_k$
  that $\P(\nu=y) \leq C 2^{-(\alpha+1)k} D^{-1}$. Thus
  \begin{align*}
    \P(\nu\in I_k, x-\nu\in T) &= \sum_{y\in I_k} \P(\nu=y) \P(x-y\in T) \\
    &\leq C 2^{-(\alpha+1)k} D^{-1} \sum_{y\in I_k} \P(x-y\in T) \\
    &= C 2^{-(\alpha+1)k} D^{-1} \E |T\cap(x-I_k)| \\
    \textrm{By Lemma \ref{L:Tsmall}}\qquad
      &\leq C 2^{-(\alpha+1)k} D^{-1} n |I_k|^\alpha = C 2^{-k}
    nD^{\alpha-1}.
  \end{align*}
  The same bound holds for $-\nu\in I_k$.  Summing over $k$ now gives
  $\P(x-\nu\in T) \leq C n D^{\alpha-1}$.  The last claim holds since $D >
  c n^{1/\alpha}$ (\ref{eq:Dgeq}).
\end{proof}
We remark that this last lemma is the most important point in the proof
where we need a pointwise estimate on $\P(\nu=y)$ and cannot do with
an estimate on $\P(\nu>y)$.

\begin{proof}[Proof of Lemma~\ref{L:TSnu}]
  We prove only the first bound, as the second is proved identically.

  Let $\nu$ be the jump of the walk the creates $S^q$ i.e.~the
  $q$\textsuperscript{th} large jump. Let $a$ be the point of
  $\hS^q$ from which the jump occurred (so $b_q = a+\nu$). The first key observation
  is that $\nu$ is independent of $\hS^q$, and only affects $S^q$ by a
  translation, so it is independent also of $S^q-\nu$.

  We have that $S^q_t-a-\nu$ and $\hS^q_t$ are also independent for any
  $t$. Note that for the stopping time $\sigma$, the independence fails,
  since their sizes are now linked. To overcome this, we consider
  intersections among the larger sets $\hS^q_{\htau^q}$ and $T^q_{\tau^q}$.
  For those we have that $\hS^q_{\htau^q}$, $T^q_{\tau^q}-a-\nu$ and $\nu$
  are jointly independent, with $\hS^q_{\htau^q}$ being some set of size
  $n$ and $\nu$ being a random walk step, conditioned to be large.
  Moreover, $a$ depends only on $\hS^q_{\htau^q}$.

  Denote $S=\hS^q_{\htau^q}$ and
  $T=T^q_{\tau^q}-a-\nu$, so we are interested in the probability that
  $S-a-\nu$ intersects $T$. Condition on $a,S$, and consider any $s\in S$.
  By Lemma~\ref{L:Tmissx} we have that $\P(s-a-\nu\in T) \leq C
  n^{2-1/\alpha}$. Since $|S|=n$ it follows that $\P(S-a-\nu \cap T \neq
  \emptyset) \leq C n^{3-1/\alpha}$. Since this bound is uniform in
  $S$ and $a$, and since $\alpha<1/3$, this completes the proof.
\end{proof}

The last part of this proof is the only place where $\alpha<\frac{1}{3}$ is
crucial to our proof. It is also used in the next lemma, though a weaker
statement which should hold for $\alpha<\frac{1}{2}$ would suffice there.
However, Lemma~\ref{L:TSnu} fails for $\alpha\in[\frac{1}{3},\frac{1}{2}]$,
as we expect there to be intersections.  Using a weaker form of this lemma
will probably require significant modification of our argument.

\subsection{Bounds on capacity influence}

Let us extend the definition of the Green's function to sets $A,B\subset
\Z$ by
\[
G(A,B) = \sum_{\substack{a\in A \\ b\in B}} G(a,b).
\]
The reason for this is the easy bound $\capa(A\cup B) \geq \capa(A) +
\capa(B) - G(A,B)$, which holds since the capacity is the sum of the
escape probabilities. With this in mind we prove the following.

\begin{lemma}\label{L:Gnu}
  For any $q\in\N$ we have $\E G(S^q_\tau,\hS^q_\tau) \leq C$.
\end{lemma}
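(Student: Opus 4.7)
The idea is to exploit the independence of the large jump $\nu$ that created $b_q$ from the subsequent growth of both parts of the SDLA. By monotonicity of the Green sum in each argument,
\[
\E G(S^q_\tau,\hS^q_\tau) \le \E G\bigl(S^q_{\tau^q(n)},\hS^q_{\htau^q(n)}\bigr),
\]
where the two sets on the right have deterministic size $n$. Writing each $s\in S^q_{\tau^q(n)}$ as $s = \tilde s + a + \nu$ with $\tilde s\in\tilde S := S^q_{\tau^q(n)}-b_q$, the SDLA construction makes the triple $(\tilde S,\ \hS^q_{\htau^q(n)},\ \nu)$ jointly independent, exactly as in the proof of Lemma~\ref{L:TSnu}. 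Taking expectation over $\nu$ first,
\[
\E G\bigl(S^q_{\tau^q(n)},\hS^q_{\htau^q(n)}\bigr) = \E\sum_{\tilde s,\hat s} \E_\nu G\bigl(\nu + (\tilde s + a - \hat s)\bigr).
\]

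The heart of the proof will be the uniform estimate $\E_\nu G(\nu + y) \le C D^{\alpha-1}$ valid for every $y\in\Z$. Granting this, the double sum has $n^2$ terms, so
\[
\E G(S^q_\tau,\hS^q_\tau) \le C n^2 D^{\alpha-1}.
\]
Invoking the a priori bound $D\ge c n^{1/\alpha}$ from (\ref{eq:Dgeq}) and $\alpha-1<0$,
\[
n^2 D^{\alpha-1} \le C n^{2 + (\alpha-1)/\alpha} = C n^{3-1/\alpha},
\]
which is even $o(1)$ since $\alpha<\tfrac13$ forces $3-1/\alpha<0$, giving the claimed constant bound.

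To establish the uniform estimate, I would combine the conditional tail $\P(\nu=z)\le CD^\alpha|z|^{-\alpha-1}$ for $|z|\ge D$ (from $\alpha$-walk conditioned on $|R_1|\ge D$) with Lemma~\ref{lem:G} to get
\[
\E_\nu G(\nu + y) \le C D^\alpha \sum_{|z|\ge D,\ z\ne -y} |z|^{-\alpha-1}|z+y|^{\alpha-1}.
\]
For $|y|\le D/2$ the bound is pathwise, since $|\nu+y|\ge D/2$ forces $G(\nu+y)\le CD^{\alpha-1}$. For $|y|>D/2$ I would split the sum into the regions $|z|\le|y|/2$, $|z|\ge 2|y|$, and $|z|\in[|y|/2,2|y|]$; in the first two, no cancellation $z+y\approx 0$ is possible and each region is readily seen to contribute at most $CD^\alpha/|y|\le CD^{\alpha-1}$. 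The hard part, and the one place where the quantitative tail of the step distribution is essential, is the intermediate range $|z|\approx|y|$, where $z+y$ can be arbitrarily small: here the factor $|z|^{-\alpha-1}\le C|y|^{-\alpha-1}$ pulls out, the singular residue $\sum_{|w|\le 3|y|}|w|^{\alpha-1}\le C|y|^\alpha$ absorbs the near-cancellation, and the product is $CD^\alpha/|y|\le CD^{\alpha-1}$ using $|y|>D$. Putting the three cases together gives the uniform estimate and completes the proof.
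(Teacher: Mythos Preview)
Your proof is correct and follows the same overall strategy as the paper: enlarge to the deterministic-size sets $S^q_{\tau^q(n)}$ and $\hS^q_{\htau^q(n)}$, use the independence of $\nu$ from both (translated) parts to reduce to a uniform bound $\E_\nu G(\nu+y)\le CD^{\alpha-1}$, and then sum over the $n^2$ pairs to get $Cn^{3-1/\alpha}\le C$.

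The one place where you work harder than necessary is the uniform estimate itself. You use the sharp tail $\P(\nu=z)\le CD^\alpha|z|^{-\alpha-1}$ and a three-region decomposition in $|z|$ versus $|y|$. The paper instead observes the cruder but uniform bound $\P(\nu=z)\le C/D$ (valid since $|z|\ge D$ forces $D^\alpha|z|^{-\alpha-1}\le D^{-1}$) and splits only on whether $|\nu+y|\le D$ or $|\nu+y|>D$: in the near case $\sum_{|w|\le D}|w|^{\alpha-1}\cdot C/D\le CD^{\alpha-1}$, in the far case $G\le CD^{\alpha-1}$ pointwise. This two-line argument replaces your three-region analysis and avoids the separate treatment of small and large $|y|$.
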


\begin{proof}
  Since $\tau_q,\htau_q \geq \tau$ it is enough to prove that $\E
  G(S^q_{\tau_q},\hS^q_{\htau_q}) \leq C$, i.e.\ allow each of the two sets
  to continue to grow to size $n$.

  As before, let $\nu$ be the size of the $q$\textsuperscript{th} large jump that gave rise
  to $S^q$. Denote $A=\hS^q_{\htau_q}$ and $B = S^q_{\tau_q} - \nu$, and
  note that $A,B$ are sets of size $n$ and $\nu$ a random walk step
  conditioned to be large, independent of $A,B$ (the independence
  claim is as in the previous lemma). Conditioning on $A,B$ we
  get
  \[
  \E G(S^q_{\tau^q},\hS^q_{\htau^q})
  = \sum_{\substack{x\in A\\y\in B}} \E G(x-(y+\nu))
  \]
  where the expectations are only over $\nu$. Recall that
  $G(x)\approx|x|^{\alpha-1}$ by lemma \ref{lem:G} and that
  $\P(\nu=z)< C/D$ for all $z$. Therefore for any
  $x$ we have
\begin{align*}
\E G(x-\nu)&\le \sum_{y:|x-y|\le D}C|x-y|^{\alpha-1}\cdot
\P(\nu=y)+CD^{\alpha-1}\P(|\nu-x|>D)\\
&\le \frac{C}{D}\sum_{i=1}^Di^{\alpha-1}+CD^{\alpha-1}\le
C D^{\alpha-1} \stackrel{\textrm{(\ref{eq:Dgeq})}}{\leq} C n^{1-1/\alpha}.
\end{align*}
Summing over the sets $A$ and $B$ gives
  \[
  \E G(S^q_{\tau_q},\hS^q_{\htau_q}) \leq C n^{3-1/\alpha} \leq C. \qedhere
  \]
\end{proof}

\subsection{Coupling DLA and SDLA}\label{sec:coupling}

We now have in place all the necessary estimates about DLA and SDLA. In
order to put them to use we need to describe a coupling between the
processes. More precisely, we construct the DLA as well as all $q$-SDLAs (for
all $q\in\N$) in the same probability space. This coupling will satisfy the
following two properties, which we formulate as lemmas:

\begin{lemma}\label{L:SisA}
  If for some $q$ and $t$ we have $T^q_t \cap \hS^q_t = \hT^q_t \cap S^q_t
  = \emptyset$, then $A_t=S^q_t\cup \hS^q_t$ (as a disjoint union).
\end{lemma}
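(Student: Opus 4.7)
My plan is to construct an explicit coupling between the DLA and all the $q$-SDLAs on a single probability space, and then verify the assertion by induction on the activation events. I would attach to each $x\in\Z$ an independent rate-$1$ Poisson clock, and to each of its rings an independent random walk trajectory. Both the DLA and every $q$-SDLA are driven by these same clocks and walks: at a ring of the clock at $x$ at time $s$, the DLA attempts to grow iff $x\in A_{s^-}$ (adding $R_1$ if $R$ escapes $A_{s^-}$), and each part of the SDLA attempts to grow iff $x$ lies in that part (adding $R_1$ if $R$ escapes that part), the sole exception being that the $q$th walk originating in $\hS^q$ whose first step has size at least $D$ adds $R_1$ to $S^q$ unconditionally. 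Whenever the identity $A=\hS^q\cup S^q$ fails we let the processes evolve independently from that moment on; the lemma asserts that under the stated hypothesis this never happens by time $t$.

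The invariant to be maintained is $A_{s^-}=\hS^q_{s^-}\sqcup S^q_{s^-}$, which holds trivially at $s=0$. Assuming it holds just before some activation at time $s\le t$, I would argue by cases. If the walk is from $a\in\hS^q_{s^-}$ and is not the $q$th split, then SDLA updates $\hS^q$ exactly when the walk escapes $\hS^q_{s^-}$, while DLA updates $A$ exactly when the walk escapes the larger set $A_{s^-}=\hS^q_{s^-}\cup S^q_{s^-}$; the two outcomes can differ only when the walk escapes $\hS^q$ but hits $S^q_{s^-}$, or when $R_1\in S^q_{s^-}$ (breaking disjointness). In either subcase the walk belongs to $\hT^q_t$ (it is a successful escape from $\hS^q$) and its trajectory meets $S^q_{s^-}\subseteq S^q_t$, contradicting $\hT^q_t\cap S^q_t=\emptyset$. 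The symmetric case $a\in S^q_{s^-}$ leads to the symmetric contradiction with $T^q_t\cap\hS^q_t=\emptyset$.

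The third case, the $q$th split itself, is the most delicate and is the main obstacle in the bookkeeping. Here the walk starts at $a\in\hS^q_{s^-}$, makes its first step to $b_q=R_1$, and SDLA adds $b_q$ to $S^q$ unconditionally, whereas DLA adds $b_q$ only if the walk from $b_q$ onward avoids $A_{s^-}=\hS^q_{s^-}$ (using that $S^q_{s^-}=\emptyset$ at the instant of the split). By the convention defining $T^q$, the tail of the walk from $b_q$ onward is placed in $T^q_s\subseteq T^q_t$. The hypothesis $T^q_t\cap\hS^q_t=\emptyset$ therefore forces simultaneously that $b_q\notin\hS^q_{s^-}$ (preserving disjointness) and that the tail avoids $\hS^q_{s^-}$ (so DLA also adds $b_q$), closing the inductive step. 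Assembling all three cases, any violation of $A_t=\hS^q_t\sqcup S^q_t$ by time $t$ would produce a point in $T^q_t\cap\hS^q_t$ or $\hT^q_t\cap S^q_t$, contradicting the hypothesis.
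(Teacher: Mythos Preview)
Your proof is correct and follows the same approach as the paper: both construct the coupling by sharing Poisson clocks and random walk trajectories between the DLA and SDLA, and both argue that as long as no cross-interaction has occurred the two processes add the same points. The paper treats this verification tersely (``points are added to $A_t$ if and only if they are also added to one of $S^q_t$ or $\hS^q_t$''), whereas you spell out the inductive case analysis explicitly, including the delicate $q$\textsuperscript{th}-split case where the convention placing the tail of that walk in $T^q$ (rather than $\hT^q$) is exactly what is needed. One minor discrepancy: the paper decouples the processes once $T^q\cap\hS^q\neq\emptyset$ or $\hT^q\cap S^q\neq\emptyset$, not once $A\neq\hS^q\cup S^q$; your argument shows these coincide under the lemma's hypothesis, so this does not affect Lemma~\ref{L:SisA}, though the paper's choice is made with Lemma~\ref{L:iid} in mind.
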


Recall that $T^q$ is the union of all walks used to create $S^q$ as
defined in the beginning of \S\ref{sub:interaction}. Recall also that $\beta_q$ is the time the $q$\textsuperscript{th} split
occurs (i.e.\ that $S^q$ is created), that $b_q$ is the first point in
$S^q$, and and that $\zeta_q$ is the first
time a split occurs from a point of $S^q$.

\begin{lemma}\label{L:iid}
  The processes $\big(\{S^q_{t+\beta_q}\} - b_q:t\leq
    \zeta_q-\beta_q\big)$, i.e.\ $S^q$ killed at its first split, form
  an i.i.d.\ sequence.  Each has the law of DLA killed once a random
  walk begins with a large jump.
\end{lemma}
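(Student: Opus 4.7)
The plan is to unfold the construction of SDLA and use the fact that, once $S^q$ is born, it evolves as a genuine DLA process using its own independent randomness. Specifically, for each $q$ the pair $(S^q_t,\hat S^q_t)$ is constructed so that $\hat S^q$ and $S^q$ each run their own independent continuous-time DLA clocks and independent random walks, with the single exception that the $q$\textsuperscript{th} large jump from $\hat S^q$ is diverted: its landing point $b_q=a+\nu$ is planted in $S^q$ rather than in $\hat S^q$. After time $\beta_q$ the dynamics of $S^q$ use fresh activation clocks attached to its own points and fresh random walks, with no reference to $\hat S^q$.

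First, I would condition on $\F_{\beta_q}$, including the identity of $b_q$. By the independence built into the SDLA construction, the post-$\beta_q$ evolution of $S^q$ is independent of $\F_{\beta_q}$ and of $\hat S^q$. It is driven by points of $S^q$ becoming active at rate $1$ and firing independent copies of the walk $R$, exactly as in the definition of DLA in continuous time. Translating by $-b_q$ and using translation invariance of $R$, we see that $S^q_{t+\beta_q}-b_q$ has, as a process in $t$, the law of DLA started from $\{0\}$. The cutoff time $\zeta_q-\beta_q$ is the first time one of the walks launched from a point of the translated $S^q$ makes an initial step of size greater than $D$; this is exactly the ``first large jump'' stopping rule described in the statement, so the killed process has the required law.

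To get independence across $q$, I would appeal to the product structure of the coupling: each $q$-SDLA is built from its own independent family of activation clocks and random walks driving the $S^q$ side (the $\hat S^q$ processes may share randomness, but that is irrelevant since the killed process in question involves only $S^q$ and its own fresh walks after $\beta_q$). Since $(S^q_{t+\beta_q}-b_q:t\le\zeta_q-\beta_q)$ is a measurable functional of this $q$-indexed, independent family, the sequence is i.i.d.

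The only real delicacy is making sure that the randomness driving $S^q$ after $\beta_q$ is genuinely independent of the $q$\textsuperscript{th} jump data $(a,\nu)$ and of everything produced by $\hat S^q$; this follows from the fact that in the SDLA construction, clocks and walks are assigned to points when those points are created, so the walks issuing from $b_q$ and its descendants are a fresh independent family whose law does not depend on how $b_q$ came to exist. Once this is in hand the conclusion is immediate by the above chain of reductions; the main, and essentially only, obstacle is therefore bookkeeping to separate the randomness driving the birth of $S^q$ from the randomness driving its subsequent growth.
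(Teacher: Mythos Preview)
Your argument that each individual $S^q_{t+\beta_q}-b_q$ has the law of DLA killed at its first large jump is fine: after $\beta_q$, $S^q$ evolves as a DLA using clocks and walks attached to its own points, and translation invariance finishes that part.

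The gap is in the independence across $q$. You invoke a ``product structure of the coupling'' in which each $q$-SDLA carries its own independent family of clocks and walks on the $S^q$ side. But that is not how the coupling is (or can be) built: the whole point of the coupling is that the SDLAs share the DLA's randomness so that Lemma~\ref{L:SisA} holds. In the paper's coupling, the clocks and walks driving $S^q$ are the very ones attached to the corresponding points of $A$, and those points are common to the DLA and to all the $q'$-SDLAs. If each $S^q$ really drew on fresh independent randomness, you would lose the identity $A_t = S^q_t\cup\hS^q_t$ and Lemma~\ref{L:SisA} would fail.

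What actually delivers independence is a disjointness argument, not a product structure. Colour the points of $A$: the root gets colour $0$; each new point inherits its parent's colour, except that at the $q$\textsuperscript{th} split the new point gets colour $q$. Then $S^q$, \emph{up to time $\zeta_q$}, consists exactly of the points of colour $q$ --- the stopping time $\zeta_q$ is precisely the moment $S^q$ would first acquire a point of some other colour. Distinct colours are disjoint sets of points, and since the randomness (clocks and walks) is attached to points, the families used by $S^q$ on $[\beta_q,\zeta_q]$ for different $q$ are disjoint pieces of the total randomness, hence independent. Your phrase ``walks issuing from $b_q$ and its descendants'' is heading in the right direction, but you still need to argue that these descendant sets are disjoint across $q$; this is exactly what the colouring establishes and is the real reason one must kill at $\zeta_q$.
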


Thus we have on the one hand that the SDLAs for different $q$ are
independent until they first split, and on the other hand they are all part
of the large DLA process to the extent possible.

The coupling is constructed as follows. Start with the DLA process $A_t$.
For any $q\in\N$, the $q$-SDLA has $\hS^q_t=A_t$ until the $q$\textsuperscript{th} time a
random walk starts with a large jump, at which time $S^q_t$ becomes
non-empty. As long as $A_t=S^q_t\cup\hS^q_t$, there is a natural way to
continue the coupling: the same points are activated in both processes and
the same random walks used from active points. As long as $T^q_t \cap
\hS^q_t = \hT^q_t \cap S^q_t = \emptyset$, points are added to $A_t$ if and
only if they are also added to one of $S^q_t$ or $\hS^q_t$.

This guarantees that Lemma~\ref{L:SisA} holds, regardless of how $S^q$ and
$\hS^q$ evolve once there is some interaction between them. In order to
achieve the independence property of Lemma~\ref{L:iid}, we say that once
$T^q_t \cap \hS^q_t \neq \emptyset$ or $\hT^q_t \cap S^q_t \neq \emptyset$,
the $q$-SDLA continues its evolution independently of the DLA and all other
$q'$-SDLAs. Lemma~\ref{L:iid} now holds since the point activation and
associated random walks used to generate $S^q$ over the time interval
$[\beta_q,\zeta_q]$ are all independent and disjoint of those used for any
other $q$.

To convince oneself that the activations and random walks are really
disjoint, colour $A_t$ as follows: the first point is coloured 0, and
whenever a point $a$ is activated and the random walk adds a new point $b$
to the DLA, $b$ inherits $a$'s colour, except when a split occurs,
in which case, if this is the $q$\textsuperscript{th} split, $b$ gets
coloured by $q$ (the ``colours'' are elements of $\{0,1,2,\dotsc\}$). It is
easy to check that $S^q$ is exactly the points that have colour $q$ until
one of the two events happen:
\begin{itemize}
\item either $T^q \cap \hS^q \neq \emptyset$ happened, which is one of
  the two ``bad'' events of lemma \ref{L:SisA}, so after that
$S^q$ becomes independent of everything else,
\item or $\zeta_q$ occurs, in which case a split happens from $S^q$
  and $S^q$ now corresponds to two colours in $A$.
\end{itemize}
This shows that the portions of $S^q$ during $[\beta_q,\zeta_q]$ are
independent, since their randomness either comes from differently coloured
points, or from a completely independent source.\qed

Now that lemmas \ref{L:SisA} and \ref{L:iid} are proved, we have all
the necessary pieces.

\subsection{The grand assembly}

We are now ready to put together all the parts of the argument, and prove
Theorem~\ref{T:linearcap}. The next lemma is the core of the proof. It uses
everything we learned so far, and the theorem follows from it by a simple
induction.  Recall that $\Med(\cdot;\frac16)$ is the $\frac16$-quantile
defined in \eqref{eq:Med}.

\begin{lemma}\label{L:highcap}
  For some $c_3$, with probability at least $1 - C n^{-c}$,
  \[
  \capa(A_n) \geq \min\left\{ \frac{c_3n}{M},\frac{\log n}{5} \Med \big(
    \capa(A_{n/\log n}); \tfrac16 \big) \right\} .
  \]
\end{lemma}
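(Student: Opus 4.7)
The argument is a dichotomy based on Lemma~\ref{L:many_splits}. With probability at least $1-Cn^{-c}$, either $\capa(A_\tau) \ge c_2 n/M$ (yielding the first alternative of the minimum with $c_3 := c_2$), or $A$ has undergone at least $\log n$ splits by $\tau(n/2)$. In the latter case I would establish the second alternative by exploiting the i.i.d.\ SDLA subaggregates $S^1,\ldots,S^{\log n}$ born at the first $\log n$ splits, which by Lemma~\ref{L:iid} (when each is killed at its own first internal split) form an i.i.d.\ sequence distributed as DLA killed at its first large jump. Write $m := \Med(\capa(A_{n/\log n});\tfrac16)$ for the median appearing in the target bound.

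For each $q \le \log n$, let $E_q$ be the event consisting of three conditions: (a) $S^q$ reaches size $n/\log n$ before any internal split; (b) at that moment $\capa(S^q) \ge m$; and (c) the SDLA/DLA coupling is intact, i.e., $T^q \cap \hS^q = \hT^q \cap S^q = \emptyset$ at the relevant time, so that $A = S^q \sqcup \hS^q$ by Lemma~\ref{L:SisA}. Combining Lemmas~\ref{L:few_splits} and \ref{L:tonlogn} gives $\P((a)) \ge \tfrac12 - Cn^{-c}$, and the definition of the $\tfrac16$-median then gives $\P((a) \cap (b)) \ge \tfrac13 - Cn^{-c}$. Lemma~\ref{L:TSnu} together with a union bound over $q \le \log n$ ensures (c) holds for all $q$ simultaneously with probability $\ge 1-Cn^{-c}$. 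Since $(a) \cap (b)$ depends only on the i.i.d.\ killed-$S^q$ sequence, a Chernoff bound yields a good subset $Q \subseteq \{1,\ldots,\log n\}$ of size $|Q| \ge (\log n)/4$ with $E_q$ holding for every $q \in Q$, with probability $\ge 1-n^{-c}$.

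On this event the sets $\{S^q : q \in Q\}$ are pairwise disjoint subsets of $A_\tau$, each of capacity at least $m$ by monotonicity. I would then apply the standard lower bound
\[
\capa\Bigl(\bigcup_{q\in Q} S^q\Bigr) \ge \sum_{q\in Q}\capa(S^q) - \sum_{q<q'\in Q} G(S^q,S^{q'}),
\]
and note that $S^{q'} \subseteq \hS^q$ for $q \ne q'$ (since $A = S^q \sqcup \hS^q$), so $\sum_{q' \ne q} G(S^q, S^{q'}) \le G(S^q, \hS^q)$. To control this interaction I would invoke the quantitative form of Lemma~\ref{L:Gnu} read off from its proof, namely $\E G(S^q_\tau, \hS^q_\tau) \le C n^{3-1/\alpha}$, which is polynomially small for $\alpha < \tfrac13$. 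A Markov bound at level $n^{(3-1/\alpha)/2}$ together with a union bound over $q$ then forces $\sum_{q<q'\in Q} G(S^q, S^{q'}) \le (\log n)\, n^{(3-1/\alpha)/2}$ with probability $\ge 1-Cn^{-c}$, which is negligible next to $(\log n/4) \cdot m$ since $m$ is bounded below by a positive constant. This gives $\capa(A_n) \ge (\log n/5) \cdot m$ for $n$ large, closing the argument.

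The main obstacle is that the $\E G \le C$ form as stated in Lemma~\ref{L:Gnu} is insufficient to survive a union bound over $\log n$ indices, as it would only produce $O(\log n)$ for the interaction total, comparable to the main term $(\log n) \cdot m$ when $m$ is of constant order. Exploiting the polynomial decay $n^{3-1/\alpha}$ latent in that proof, available precisely because $\alpha < \tfrac13$, is what closes the gap and mirrors the role of the hypothesis $\alpha < \tfrac13$ in Lemma~\ref{L:TSnu}.
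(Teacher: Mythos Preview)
Your overall strategy matches the paper's, but there is a real gap: you never establish that the good branches $S^q$ have actually reached size $n/\log n$ \emph{by time $\tau(n)$}. Your condition (a) only says $\tau_q(n/\log n)<\zeta_q$, not $\tau_q(n/\log n)\le\tau(n)$. Under the coupling you have $S^q_\tau\subset A_\tau$, but $|S^q_\tau|$ could be much smaller than $n/\log n$ if that branch grows slowly, in which case ``capacity at least $m$ by monotonicity'' fails. The paper closes this by strengthening (a) to require also $\tau_q(n/\log n)<\beta_q+2M$ (this is exactly Lemma~\ref{L:24M}, and costs nothing in probability), and then invoking Lemma~\ref{L:halfnton}: once one is in the branch of the dichotomy where $\capa(S_\sigma)+\capa(\hS_\sigma)<n/5M$ for all $q\le\log n$, that lemma forces $\tau(n)-\tau(n/2)\ge 2M$. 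Since each $\beta_q\le\tau(n/2)$, one concludes $\tau_q(n/\log n)\le\beta_q+2M\le\tau(n/2)+2M\le\tau(n)$, so the first $n/\log n$ points of each good $S^q$ really sit inside $A_\tau$. You omit Lemma~\ref{L:halfnton} entirely, and without it the argument does not close.

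Your worry about the interaction term is misplaced. The $\tfrac16$-median $m$ cannot be of constant order: Lemma~\ref{lem:mincapa} gives $\capa(A_{n/\log n})\ge c(n/\log n)^{1-\alpha}$ almost surely, so $m\ge c(n/\log n)^{1-\alpha}$ is polynomially large. Hence the paper simply bounds $\E\sum_{q\le\log n}G(S^q_\tau,\hS^q_\tau)\le C\log n$ from Lemma~\ref{L:Gnu} as stated, uses Markov to say this sum exceeds $n^c$ with probability $\le C(\log n)n^{-c}$, and absorbs $Cn^c$ into $(\log n/4)m$ to get $(\log n/5)m$. Your detour through the proof of Lemma~\ref{L:Gnu} to extract $n^{3-1/\alpha}$ is correct but unnecessary.
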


\begin{proof} We may assume $n$ is sufficiently large, as for small
  $n$ the lemma is true trivially if the $C$ in the probability
  estimate $1-Cn^{-c}$ is made large enough. For every $q\in\N$ Let $S^q$
  be a $q$-SDLA process, and assume that they are all coupled to our DLA as
  in the previous section.
  With high probability the (bad) event of Lemma~\ref{L:many_splits} does not
  occur for the DLA and the (bad) events of Lemmas~\ref{L:halfnton} and
  \ref{L:TSnu} do not occur for any $q\leq\log n$. Here and below
  ``with high probability'' means with probability at least
  $1-Cn^{-c}$. To spare the reader
  some page flipping here is a short reminder of these lemmas (up to the
  aforementioned bad events):
\begin{list}{}{}
\item[Lemma \ref{L:many_splits}] Either the DLA splits $\log n$
  times or it has large capacity.
\item[Lemma \ref{L:halfnton}] Either the SDLA satisfies
  $\sigma(n)-\sigma(n/2)\ge 2M$ or it has large capacity.
\item[Lemma \ref{L:TSnu}] $T\cap \hS = \hT \cap S = \emptyset$.
\end{list}
  Thus we suppose from
  here on that this is the case. Recall that $\tau$ is the stopping
  time when the DLA reaches size $n$ and $\sigma$ is the analogous
  quantity for the $q$-SDLA.  From Lemma \ref{L:SisA} we now see that
  $\sigma(m) = \tau(m)$ for any $m\le n$ and for any $q\leq \log n$.

  If for some $q\leq \log n$ we have $\capa(S_\sigma) + \capa(\hS_\sigma)
  \geq n/5M$ then at least one of them has capacity at least
  $n/10M$. Since $A_\tau = S_\sigma \cup\hS_\sigma$ (again using Lemma
  \ref{L:SisA}) by the monotonicity of capacity we get
  $\capa(A_\tau)\geq n/10M$ and we are done as long as
  $c_3\leq\frac{1}{10}$. Thus suppose this too is not the case, and
  so, since the bad event of Lemma~\ref{L:halfnton} did not occur, we get
  $\tau(n)-\tau(n/2) \ge 2M$. By making $c_3<c_2$ (with $c_2$ taken from
  Lemma~\ref{L:many_splits}) we may similarly
  assume (this time using Lemma~\ref{L:many_splits})
  that $A$ splits at least $\log n$ times by time $\tau(n/2)$.

  Call a split $S^q$ {\bf good} if the following both hold:
  \begin{enumerate}
  \item \label{enu:typcap} its capacity is typical: 
    $\capa(S^q_{\tau_q(n/\log n)}) \geq
     \Med\big( \capa(A_{\tau(n/\log n)}); \frac16 \big)$, and
  \item \label{enu:fastgrow} it grows fast enough: $\tau_q(n/\log n) <
    \min\{\beta_q + 2M,\zeta_q\}$, that is, $S^q$ reaches size $n/\log n$
    both before splitting ($\zeta_q$) and within $2M$ of becoming non-empty
    ($\beta_q$).
  \end{enumerate}
  Since this only depends on the process $S^q$ until it splits, the events
  that $S^q$ are good are i.i.d.\ (by Lemma~\ref{L:iid}).  By the
  definition of $\Med$, the probability of clause \ref{enu:typcap} holding
  for any $q$ is at least $\frac{5}{6}$.  On the other hand,
  Lemma~\ref{L:24M} gives that clause \ref{enu:fastgrow} holds with
  probability at least $\frac{1}{2}-Cn^{-c}$. Hence
  each $S^q$ is good with probability at least $\frac13 - C n^{-c}$.
  Thus (for $n$ large enough) the probability that at least
  $\frac14\log n$ of the $S^q$ for $q\leq\log n$ are good is at least
  $1-Ce^{-c\log n} = 1-Cn^{-c}$. Assume this is the case.

  To summarize our current assumptions, either $\capa(A_\tau)>\frac{c_3
    n}{M}$, or else there are at least $\frac{1}{4}\log n$ good branches
  $S^q$ with
  $q<\log n$, each of which started before time $\tau(n/2)$ and accumulated
  $n/\log n$ points without splitting and within time $2M$. Since also
  $\tau(n)-\tau(n/2) \geq 2M$, these points are all present in $A_\tau$ and
  separate from those of other good branches. Moreover, the first $n/\log
  n$ points in each of these branches have a typically large capacity.

  Let $\Gg = \{q<\log n, S^q \text{ is good}\}$, so that $A_\tau$
  contains $\bigcup_{q\in\Gg} S^q_{\tau_q(n/\log n)}$.  This implies the
  capacity bound
  \[
  \capa(A_\tau) \geq \sum_{q\in\Gg} \capa(S^q_{\tau_q(n/\log n)}) - C
  \sum_{q\in\Gg} G(S^q_\tau, \hS^q_\tau).
  \]
  (In fact, it suffices to subtract the Green's function between
  $S^q_{\tau_q(n/\log n)}$ and the union of that set for other good $q$'s,
  which is smaller.)  Now, for each good $S^q$ we have
  $\capa(S^q_{\tau_q(n/\log n)}) \geq \Med\big( \capa(A_{\tau(n/\log n)});
  \frac16 \big)$, and each term in the second sum has bounded expectation
  (by Lemma~\ref{L:Gnu}). It follows that
  \[
  \E \sum_{q\in\Gg} G(S^q_\tau, \hS^q_\tau) \leq 
  \E \sum_{q=1}^{\log n} G(S^q_\tau, \hS^q_\tau) \leq C \log n
  \]
  and so this sum exceeds $n^c$ only with probability $C n^{-c} \log n$. If
  it does not then because $\capa(A_{\tau(n/\log n)}) \geq c(n/\log
  n)^{1-\alpha}$ (Lemma \ref{lem:mincapa}),
  \begin{align*}
    \capa(A_\tau) &
    \geq \frac{\log n}{4} \Med\big( \capa( A_{\tau(n/\log n)}); \tfrac16
    \big) - Cn^{c} \\
    & \geq \frac{\log n}{5} \Med\big( \capa(A_{\tau(n/\log n)}); \tfrac16
    \big),
  \end{align*}
  (for $c$ small and $n$ large). This completes the proof.
\end{proof}

\begin{lemma}\label{L:linearcap}
  $M_n = n^{o(1)}$, and $\Med(\capa A_\tau(n);\tfrac{1}{6}) = n^{1-o(1)}$.
\end{lemma}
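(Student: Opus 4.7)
The plan is to establish both estimates jointly by strong induction on $n$. I will show that for every $\eps>0$ there exist constants $c_0=c_0(\eps)>0$, $C_0=C_0(\eps)>0$ and a threshold $n_0=n_0(\eps)$ such that, writing $\kappa_n:=\Med(\capa(A_{\tau(n)});\tfrac{1}{6})$, the bounds
\[
\kappa_n \geq c_0\, n^{1-\eps}
\quad\text{and}\quad
M_n \leq C_0\, n^{\eps}
\]
hold jointly for all $n\geq n_0(\eps)$. Since $\eps$ is arbitrary (and the reverse bounds $\kappa_n\leq n$, $M_n\geq c$ are automatic), this gives the lemma. Lemma~\ref{l:cap_lowerbound} provides a qualitative base case, and the finitely many $n$ below any threshold can be absorbed by shrinking $c_0$ and enlarging $C_0$.

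For the inductive step on $\kappa_n$ I apply Lemma~\ref{L:highcap} directly: with probability at least $1-Cn^{-c}\geq 5/6$ one has $\capa(A_{\tau(n)})\geq\min\{c_3n/M_{n/\log n},(\log n/5)\kappa_{n/\log n}\}$, so the same lower bound descends to $\kappa_n$. Substituting the inductive hypotheses at scale $n/\log n$, both terms inside the minimum exceed $\min(c_3/C_0,\,c_0/5)\cdot n^{1-\eps}(\log n)^{\eps}$. For the inductive step on $M_n$ I use monotonicity of capacity to write a dyadic upper bound
\[
\sum_{i=1}^{n}\frac{1}{\capa(A_{\tau(i)})}\leq C+\sum_{k_0\leq k\leq\log_2 n}\frac{2^{k+1}}{\capa(A_{\tau(2^k)})},
\]
and apply Lemma~\ref{L:highcap} at each dyadic scale. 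A union bound over $k\geq k_0$ (whose total failure probability $\sum_{k\geq k_0}C\,2^{-ck}$ can be made at most $1/6$ by taking $k_0$ large) gives $\capa(A_{\tau(2^k)})\geq c_1(2^k)^{1-\eps}(\log 2^k)^{\eps}$ simultaneously for all relevant $k$, with $c_1:=\min(c_3/C_0,c_0/5)$. Evaluating the resulting geometric sum (here the $(\log 2^k)^{-\eps}$ factor is crucial, since it kills the logarithmic blow-up) yields $M_n\leq C+(C/c_1\eps)\, n^{\eps}(\log n)^{-\eps}$.

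The induction closes once the constants can be matched. Setting $C_0:=5c_3/c_0$ equalises the two options, giving $c_1=c_0/5$; the $\kappa$-bound then requires $(\log n)^{\eps}\geq 5$, while the $M$-bound requires $(\log n)^{\eps}\geq C/(c_3\eps)$. Both hold once $n\geq n_0(\eps):=\exp[(C/(c_3\eps))^{1/\eps}]$, which is finite for each fixed $\eps>0$. The main obstacle (and really the only delicate point) in the proof is precisely this quantitative balancing: the loss of a factor of $1/5$ in the recursion $\kappa_n\geq(\log n/5)\kappa_{n/\log n}$ would, by itself, blow up under iteration, but it is exactly absorbed by the $(\log n)^{\eps}$ gain that comes from working with the strictly subcritical exponent $1-\eps$, provided $n$ is taken sufficiently large depending on $\eps$. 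Once this is checked for every $\eps>0$ separately, we conclude $\kappa_n=n^{1-o(1)}$ and $M_n=n^{o(1)}$.
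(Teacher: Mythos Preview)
Your proposal is correct and follows essentially the same approach as the paper: both arguments fix $\eps>0$, feed the recursive inequality of Lemma~\ref{L:highcap} into a strong induction, and close the induction by observing that the multiplicative loss (the factor $1/5$, or $1/10$) is beaten by the gain $(\log n)^\eps$ coming from working at the subcritical exponent $1-\eps$. The only difference is presentational: the paper fixes a single global event $\Gg$ (that Lemma~\ref{L:highcap} holds at all dyadic scales simultaneously, with $\P(\Gg)>5/6$) and then inducts on the pair $(M_n,\,Q_n\text{ on }\Gg)$, whereas you induct on the deterministic pair $(M_n,\kappa_n)$ and re-run the dyadic union bound for each $n$ when bounding $M_n$. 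Your version is slightly more redundant (the union bound is repeated), while the paper's avoids this by packaging it into $\Gg$ once; otherwise the constant-matching and threshold choice are essentially identical.
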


\begin{proof}
  We now no longer need the SDLA process, so to keep notations clear,
  denote $Q_n = \capa(A_{\tau(n)})$.  All we need about these random
  variables is that $Q_1=1$, $Q_{n+1}\ge Q_n$, that $M_n$ is (by
  definition) the $\frac56$-median of $\sum^{n} Q_i^{-1}$, and that
  with high probability
  \begin{equation}\label{eq:Qbound}
    Q_n \geq \min\left\{ \frac{c_3 n}{M_{n/\log n}},\frac{\log n}{5} \Med \big(
      Q_{n/\log n}; \tfrac16 \big) \right\} .
  \end{equation}
  In particular, for some $\ell_0$, \eqref{eq:Qbound} holds simultaneously for
  all $n=2^\ell$, $\ell\geq \ell_0$ with probability at least
  $\frac56$. Call the event that this happens $\Gg$.

  Fix $\eps>0$, and make $\ell_0$ larger if needed, so that $\big(\log
  2^{\ell_0}\big)^\eps > \max\{2 / c_3\eps, 10\}$.  (This can only increase
  $\Gg$.)  We now pick some $K=K(\ell_0)$ sufficiently large such that for
  all $n\leq 2^{\ell_0}$
  \begin{equation}\label{eq:MQinfuct}
    M_n\leq K n^{\eps}, \qquad \text{ and } \qquad
    Q_n \geq \frac{n^{1-\eps}}{K\eps}.
  \end{equation}
  We now prove by induction that on the event $\Gg$, \eqref{eq:MQinfuct}
  holds for all $n$ (the left clause in \eqref{eq:MQinfuct} is just an
  inequality of numbers so we just show that it holds --- the right
  clause is an event, and we show that it follows from $\Gg$).

  To see this, consider some $n>2^{\ell_0}$, and let $n'=2^\ell\leq n$ be
  the largest power of $2$ below $n$.  We first show the right clause in
  \eqref{eq:MQinfuct}.  Since $Q_n$ is monotone, applying \eqref{eq:Qbound}
  with $n'$ (which we are allowed, because we assume the event $\Gg$
  happened) we get either
  \begin{equation}\label{eq:cases}
  Q_n \geq \frac{c_3 n'}{M_{n'/\log n'}}
  \qquad \text{ or } \qquad
  Q_n \geq \frac{\log n'}{5} \Med \big(Q_{n'/\log n'}; \tfrac16 \big).
  \end{equation}
  Under the induction hypothesis \eqref{eq:MQinfuct} for $n'/\log n'$, the
  former case implies
  \[
  Q_n \geq \frac{c_3 n'}{M_{n'/\log n'}}
  \geq \frac{c_3 n'}{K (n'/\log n')^\eps}
  \geq \frac{c_3}{2K} n^{1-\eps} (\log n')^\eps.
  \]
  By our assumption that $\big(\log 2^{\ell_0}\big)^\eps > 2/c_3\eps$, this
  case yields $Q_n > n^{1-\eps}/(K \eps)$, as needed.  For the latter case
  in \eqref{eq:cases} note that because 
  $\P(\Gg)>\frac{5}{6}$, the induction hypothesis implies that
  $\Med(Q_m;\frac{1}{6})\ge m^{1-\eps}/(K\eps)$ for all $m<n$ and in
  particular for $n'/\log n'$ so we get
  \[
  Q_n \geq \frac{\log n'}{5} \Med \big(Q_{n'/\log n'}; \tfrac16 \big)
  \geq \frac{\log n'}{5} \frac{(n'/\log n')^{1-\eps}}{K\eps}
  \geq \frac{1}{10 K\eps} n^{1-\eps} (\log n')^\eps.
  \]
  By our assumption that $\big(\log 2^{\ell_0}\big)^\eps > 10$
  this case too yields $Q_n > n^{1-\eps}/(K \eps)$.

  It remains to bound $M_n$.  This is easy, since on the event $\Gg$ we
  have $Q_i \geq \frac{i^{1-\eps}}{K \eps}$ for all $i\leq n$.  On this
  event,
  \[
  \sum^{n}_{i=1} Q_i^{-1} \leq
  K \eps \sum^{n}_{i=1} i^{\eps-1} \leq K n^\eps
  \]
  Since $\Gg$ has probability bigger than $\frac56$, we get that the
  $\frac56$-median of the sum, namely $M_n$ is at most $K
  n^\eps$. This completes the induction and shows that under $\Gg$,
  \eqref{eq:MQinfuct} holds for all $n$. Since $\eps$ was
  arbitrary, the lemma is proved.
\end{proof}

\begin{proof}[Proof of Theorem~\ref{T:linearcap}]
  By monotonicity we may consider only $n=2^\ell$. By Lemmas
  \ref{L:linearcap} and \ref{L:highcap},
  $\P(\capa(A_n)>n^{1-o(1)})>1-Cn^{-c}$. By Borel-Cantelli, this
  event holds for all but finitely many such $n$. 
\end{proof}

\subsection*{Acknowledgements}
We would like to thank Itai Benjamini for initiating this project and
for many enlightening discussions. GA and GK's work partially supported by
the Israel Science Foundation. OA's work partially supported by the
NSERC and the ENS.

\begin{flushright}
\footnotesize
Gideon Amir\\
\nolinkurl{gidi.amir@gmail.com}\\
Bar-Ilan University, Ramat Gan, Israel

\medskip

Omer Angel\\
\nolinkurl{angel@math.ubc.ca}\\
University of British Columbia, Vancouver, Canada

\medskip

Gady Kozma\\
\nolinkurl{gady.kozma@weizmann.ac.il}\\
The Weizmann Institute of Science, Rehovot, Israel
\end{flushright}

\end{document}